\def\1{\hbox{1\kern-.35em\hbox{1}}}
\newtheorem{claim}{\indent Claim}
\newtheorem{theorem}{Theorem}[section]
\newtheorem*{theorem*}{Theorem}
\newtheorem{lemma}[theorem]{Lemma}
\newtheorem{proposition}[theorem]{Proposition}
\newtheorem*{proposition*}{Proposition}
\newtheorem{definition}[theorem]{Definition}
\newtheorem{notation}[theorem]{Notation}
\newtheorem{remark}[theorem]{Remark}
\newtheorem{example}[theorem]{Example}
\newtheorem{convention}[theorem]{Convention}
\numberwithin{equation}{section}
\newcommand{\bea}{\begin{eqnarray}}
\newcommand{\eea}{\end{eqnarray}}
\newcommand{\be}{\begin{eqnarray*}}
\newcommand{\ee}{\end{eqnarray*}}
\newcommand{\Z}{{\mathbb Z}}
\newcommand{\C}{{\mathbb C}}
\newcommand{\fg}{{\mathfrak g}}
\newcommand{\fh}{{\mathfrak h}}
\def\UPL#1#2{\put(-5,#1){\line(0,1){5}\put(0,5){\line(1,0){#2}}}}
\def\UPR#1#2{\put(-5,#1){\line(0,1){5}\put(0,5){\line(-1,0){#2}}}}
\def\DNL#1#2{\put(-5,-#1){\line(0,1){5}\line(1,0){#2}}}
\def\DNR#1#2{\put(-5,-#1){\line(0,1){5}\line(-1,0){#2}}}
\def\bigskip{\vskip6pt}
\def\medskip{\vskip6pt}
\def\ker#1{{\rm Ker}(#1)}
\def\Im#1{{\rm Im}(#1)}
\def\a{\alpha}
\def\sign{{\rm sign}}
\def\b{\beta}
\def\d{\delta}
\def\D{\Delta}
\def\g{\gamma}
\def\G{\Gamma}
\def\th{\theta}
\def\l{\lambda}
\def\L{\Lambda}
\def\Si{\Sigma}
\def\si{\sigma}
\def\sc{\scriptstyle}
\def\ssc{\scriptscriptstyle}
\def\dis{\displaystyle}
\def\wt{\widetilde}
\def\rar{\rightarrow}
\def\rrar{\rightarrow}
\def\llar{\leftarrow}
\def\drar{\mbox{${\sc\,}\rrar\!\!\!\!\rar{\sc\,}$}}
\def\dlar{\mbox{${\sc\,}\lar\!\!\!\!\llar{\sc\,}$}}
\def\link{\mbox{${\ssc\,}\lar\!\!\!\!\!\!-\!\!\!\!\!\!\rar{\ssc\,}$}}
\def\lar{\leftarrow}
\def\D{\Delta}
\def\Lra{\Longleftrightarrow}
\def\bs{\backslash}
\def\rb{\raisebox}
\def\ch{{\rm ch{\ssc\,}}}
\def\N{\mathbb{N}}
\def\Z{\mathbb{Z}}
\def\C{\mathbb{C}}
\def\gl{{\mathfrak g}}
\def\vp{\varphi}
\def\es{\varepsilon}
\def\refequa#1{(\ref{#1})}
\def\stl{\stackrel}
\def\ops{{\mathfrak{osp}}_{k|2}}
\def\nex{{{}\hat{}}}
\def\pre{{{}\check{}}}
\def\DEP{{\D_{\bar0}^+}}
\def\DOP{{\D_{1}^+}}
\def\equa#1#2{
\begin{equation}\label{#1}\mbox{$#2$}\end{equation}}
\def\equan#1#2{$$\mbox{$#2$}$$}
\numberwithin{equation}{section}
\def\cO{\mathcal{O}}
\begin{document}

\title[Generalised Verma modules for Lie superalgebras]
{Generalised Verma modules for \\ the orthosymplectic Lie
superalgebra ${\mathfrak{osp}}_{k|2}$}

\author[Yucai Su]{Yucai Su}
\address{Yucai SU,
Department of Mathematics, University of Science and Technology of
China, Hefei 230026, China.} \email{ycsu@ustc.edu.cn}
\author[R. B. Zhang]{R. B. Zhang}
\address{R. B. Zhang, School of Mathematics and Statistics,
University of Sydney, NSW 2006, Australia.}
\email{rzhang@maths.usyd.edu.au}

\begin{abstract}
The composition factors and their multiplicities are determined for
generalised Verma modules over the orthosymplectic Lie superalgebra
${\mathfrak{osp}}_{k|2}$. The results enable us to obtain explicit
formulae for the formal characters and dimensions of the
finite-dimensional irreducible modules. Applying these results, we
also compute the first and second cohomology groups of the Lie
superalgebra with coefficients in finite-dimensional Kac modules and
irreducible modules.

\thanks{{\em 2000 Mathematics Subject Classification.
Primary  17B37, 20G42, 17B10.}}
\end{abstract}
\maketitle

\tableofcontents

%
\section{Introduction}\label{Introduction}

Lie superalgebras continue to attract much interest in both
mathematics and physics. These algebras originated from quantum
field theory in the 1970s, and provide the mathematical foundation
for the physical notion of supersymmetry. [The Large Hadron Collider
built by the European Organization for Nuclear Research is currently
in operation to verify supersymmetry experimentally.]  It was
already clear from the foundational works of Kac \cite{K77, Kac2,
Kac3} that the representation theory of finite-dimensional simple
Lie superalgebras was very different from that of the ordinary
simple Lie algebras. In particular, finite-dimensional
representations of such a Lie superalgebra over $\C$ are not
semi-simple in general, in sharp contrast to those of the latter.
This renders the development of the representation theory of Lie
superalgebras a more interesting but harder task.

The finite-dimensional irreducible representations of some Lie
superalgebras \cite{V85, V91} and classes of representations of the
general linear superalgebra \cite{DJ, BR, BL, PS} were understood
long ago, and very precise conjectures were also formulated on the
representation theory of the general linear superalgebra
\cite{VHKT}.  However, it was within the last fifteens year that
major advances were achieved. Serganova in \cite{Se96, Se98} further
developed the geometric approach \cite{P, PS} to the representation
theory of Lie superalgebras  to derive algorithms for computing
characters of finite-dimensional irreducible representations.
Brundan \cite{B} discovered a remarkable connection between the
general linear superalgebra and the quantum group of
${\mathfrak{gl}}_\infty$. He reformulated the generalised
Kazhdan-Lusztig theory for the Lie superalgebra in terms of
canonical bases of the quantum group, proving the algorithm for
determining the composition numbers of the Kac modules conjectured
in \cite{VZ}. The approach of \cite{B} was further developed in
\cite{Su, SuZ2, CWZ, CL}. In particular, a closed character formula
was obtained for the finite-dimensional irreducible representations
of the general linear superalgebra in \cite{SuZ2}, and the
generalised Kazhdan-Lusztig polynomials of ${\mathfrak{gl}}(m|N)$
were shown to be equal to those of ${\mathfrak{gl}}(m+N)$ (when
$N\to\infty$) in \cite{CWZ} (and earlier in \cite{CZ} for polynomial
representations). This suggested an equivalence \cite[Conjecture
6.10]{CWZ} between the appropriate parabolic category $\cO$ of
${\mathfrak{gl}}(m|\infty)$ and that of ${\mathfrak{gl}}(m+\infty)$,
which was recently established in \cite{CL} and \cite{BS}
independently using very different methods.

Very recently, Gruson and Serganova \cite{GS} improved upon earlier
work of Serganova \cite{Se98} to give combinatorial algorithms for
computing characters of finite-dimensional irreducible
representations of $\mathfrak{osp}_{k|2n}$. An interesting fact is
that references \cite{Se96, Se98} worked entirely within the
category of finite-dimensional $\mathfrak{osp}_{m|2n}$-modules,
where certain virtual modules built from cohomology groups of
locally free equivariant sheavs over super Grassmannians played the
role of Kac modules in the case of type $A$. In this paper we
further study the representation theory of the orthosymplectic Lie
superalgebras.

In this paper we study the structure of the generalised Verma
modules for the orthosymplectic Lie superalgebras. The goal here is
somewhat different from that of references \cite{Se96, Se98}, as
generalised Verma modules do not play a role within the framework of
these papers. We shall focus on $\mathfrak{osp}_{k|2n}$ for $n=1$
and $k\ge 3$. We choose the distinguished Borel subalgebra for
$\mathfrak{osp}_{k|2}$ and consider the maximal parabolic subalgebra
obtained by removing the unique odd simple root. The generalised
Verma modules studied here are those induced from finite-dimensional
irreducible modules over the parabolic subalgebra.

One of our main results is determining the composition factors of
the generalised Verma modules for $\mathfrak{osp}_{k|2}$ and also
obtaining their multiplicities (see  Theorem
\ref{theorem-verma-module} for the precise statement of the result).
Using this result we construct explicit formulae for the characters
and dimensions of the finite-dimensional irreducible modules in
Theorem \ref{main-theo1}. The formulae are sums of a finite number
of terms, each of which resembles Weyl's character or dimension
formula for semi-simple Lie algebras. We point out that a character
formula (in the form of an infinite sum) for the finite dimensional
irreducible $\mathfrak{osp}_{k|2}$-modules was obtained very
recently by Luo \cite{L} using a different method.

The maximal finite-dimensional quotients of the generalised Verma
modules with dominant highest weights are the Kac modules \cite{K77,
Kac2, Kac3} for the orthosymplectic Lie superalgebras. Kac modules
naturally arise as the zeroth cohomology groups of some locally free
equivariant sheavs over the super Grassmannian corresponding to the
parabolic subalgebra. However, if the highest weight is atypical,
the Kac module can be irreducible in some special cases but is not
irreducible in general. Therefore it is also interesting to
understand the structure of Kac modules. We determine the characters
of the Kac modules in Proposition \ref{Kac-ch}.

By using results obtained on the structure of the generalised Verma
modules and Kac modules, we compute explicitly the first and second
cohomology groups of the orthosymplectic Lie superalgebra with
coefficients in finite-dimensional irreducible modules and Kac
modules in Theorem \ref{1-coho}. This part of the paper is a
continuation of \cite{SZ98, SuZ1} on the study of Lie superalgebra
cohomology.

Closely related to our work is the very recent paper by Cheng, Lam
and Wang \cite{CLW}, which proved an equivalence between the
parabolic $\cO$ categories of ${\mathfrak{osp}}_{m|\infty}$ and
${\mathfrak{so}}_{m|\infty}$ in analogy to the type-$A$ case.
Equipped with this equivalence of categories, one in principle can
study the representation theory of ${\mathfrak{osp}}_{m|2n}$ for
finite $n$ by first understanding representations of
${\mathfrak{so}}_{m|\infty}$, then transcribing the results to the
corresponding representations of ${\mathfrak{osp}}_{m|\infty}$, and
finally deducing precise results for
${\mathfrak{osp}}_{m|2n}$-representations by truncating the
${\mathfrak{osp}}_{m|\infty}$-representations to finite $n$. Such a
method worked reasonably well in the case of the polynomial
representations of ${\mathfrak{gl}}(m|n)$ \cite{CZ}, and it is
expected to work in the case of ${\mathfrak{osp}}_{m|2n}$ as well.


%
%
\section{Preliminaries}
\label{Preliminaries}

This section contains some background material which will be needed
later.  It also serves to fix notation. We work over the field of
complex numbers throughout the paper.

\subsection{The Lie superalgebras $\ops$}
We denote by $\gl$ the orthosymplectic Lie superalgebra $\ops$ for
$k>2$. The algebra $\mathfrak{osp}_{2|2}$ is of type I \cite{K77},
whose finite-dimensional irreducible modules have long been
understood \cite{V91}. Choose the distinguished Borel subalgebra
\cite{K77,Sch} for $\gl$. Then the corresponding Dynkin diagram is
given by
\equan{Dynkin}{
\begin{array}{lllll} \gl=D(m,1):&
\stl{\d-\es_1}{{\sc\otimes}}\!\!\!\!\!-\!\!\!\!\!-\!\!\!\!\!-\!\!\!\!\!-\!\!\!\!\!
-\!\!\!\!\!-\!\!\!\!\!-\!\!\!\!\!-\!\!\!\!\!
\stl{\es_1-\es_2}{\circ}
\!\!\!\!\!-\!\!\!\!\!-\!\!\!\!\!-\!\!\!\!\!-\!\!\!\!\!-\!\!\!\!\!-\!\!\!\!\!
-\!\cdots
\!-\!\!\!\!\!-\!\!\!\!\!-\!\!\!\!\!-\!\!\!\!\!-\!\!\!\!\!-\!\!\!\!\!
-\!\!\!\!\!-\!\!\!\!\!-\!\!\!\!\!-\!\!\!\!\!-
\!\!\!\!\!\!\!\!\stl{\es_{m-2}-\es_{m-1}}{-\!\!\!\!\!
-\!\!\!\!\!-\!\!\!\!\!-\!\!\!\!\!-\!\!\!\!\!
-\!\!\!\!\!-\!\!\!\!\!-\!\!\!\!\!-\!\!\!\!\!
-\!\!\!\!\!-\!\!\!\!\!-\!\!\!\!\!-\!\circ}
\put(-8,5){\line(2,1){10}\put(-10,-4){\line(2,-1){10}}}
\put(3,7){$\circ\,\put(2,3){$\sc\es_{m-1}-\es_m$}$}
\put(3,-8){$\circ\,\sc\es_{m-1}+\es_m$}\ \hspace*{45pt}\  &\mbox{if
\ }k=2m,
\\[15pt]
\gl=B(m,1):&
\stl{\d-\es_1}{{\sc\otimes}}\!\!\!\!\!-\!\!\!\!\!-\!\!\!\!\!-\!\!\!\!\!-\!\!\!\!\!
-\!\!\!\!\!-\!\!\!\!\!-\!\!\!\!\!-\!\!\!\!\!
\stl{\es_1-\es_2}{\circ}
\!\!\!\!\!-\!\!\!\!\!-\!\!\!\!\!-\!\!\!\!\!-\!\!\!\!\!-\!\!\!\!\!-\!\!\!\!\!
-\!\cdots
\!-\!\!\!\!\!-\!\!\!\!\!-\!\!\!\!\!-\!\!\!\!\!-\!\!\!\!\!-\!\!\!\!\!
-\!\!\!\!\!-\!\!\!\!\!-\!\!\!\!\!-\!\!\!\!\!-
\!\!\!\!\!\!\!\!\!\!\!\!\!\!\!\!\stl{\ \ \ \ \ \
\es_{m-1}-\es_{m}}{-\!\!\!\!\!-\!\!\!\!\!-\!\!\!\!\!-\!\!\!\!\!-\!\!\!\!\!
-\!\!\!\!\!-\!\!\!\!\!-\!\!\!\!\!-\!\!\!\!\!-\!\!\!\!\!-\!\!\!\!\!-\!\!\!\!\!
-\!\circ\!\!\!\!\!}
\!\!\!\!\!\Longrightarrow\!\stl{\es_m}{\circ}
 \ \hspace*{0pt}\ &\mbox{if \ }k=2m+1,
\end{array}
}
where the grey node is associated with the unique odd simple
root. We shall denote by $\Pi$ the set of simple roots. We also denote by $\DEP$ and $\DOP$ the sets of
positive even roots and positive odd roots respectively, which are given by
\equa{root}
{
\begin{array}{lll}
\DEP=\{2\d,\es_i\pm\es_j\,|\,1\!\le\! i\!<\!j\!\le\!
m\},&\DOP=\{\d\pm\es_i\,|\,1\!\le\! i\!\le\! m\}&\mbox{if
}k\!=\!2m,\\[7pt]
\DEP=\{2\d,\es_i,\es_i\pm\es_j\,|\,1\!\le\! i\!<\!j\!\le\!
m\},\!\!&\DOP=\{\d,\d\pm\es_i\,|\,1\!\le\! i\!\le\! m\}\!\!&\mbox{if
}k\!=\!2m\!+\!1. \nonumber
\end{array}
}
We define the order
\equa{order-weight}{\mbox{$0<\d-\es_1<\es_1-\es_2<\cdots<\es_{m-1}-\es_m$
and $0<\d,\es_i$,} \nonumber} which gives a total order on the set
of roots.

\begin{notation}\label{e-f-a}
\rm For $\a\in\Pi$, we fix $e_\a,\,f_\a$ to be respectively  the
positive, negative root vectors. For any $\b\in\D^+\bs\Pi$, where
$\D^+=\DEP\cup\DOP$, we uniquely define the positive, negative root
vectors $e_\b,\,f_\b$ as follows: Let $\a\in\Pi$ be the unique
minimal simple root such that $\b-\a\in\D^+$, then
$e_\b=[e_\a,e_{\b-\a}],$ $f_\b=[f_{\b-\a},f_\a]$. We also write
$e_{-\b}=f_\b$ and define $h_\b=[e_\b,f_\b]$ for $\b\in\D^+$.
\end{notation}

Then $h_\a,\,\a\in\Pi$ forms a basis of the Cartan subalgebra $\fh$.
The bilinear form $(\cdot,\cdot)$ on $\fh^*$ is defined by
\equa{form}{ (\d,\d)=-1,\,\ \ (\d,\es_i)=0,\,\ \
(\es_i,\es_j)=\d_{i,j}\mbox{ \ \ for \ }1\le i, j\le m. \nonumber}

It is clear that $\gl=\oplus_{i=-2}^2\gl_i$ is a $\Z_2$-consistent
$\Z$-graded Lie superalgebra such that
$\gl_{\bar0}=\gl_{-2}\oplus\gl_0\oplus\gl_2$ and
$\gl_{\bar1}=\gl_{-1}\oplus\gl_{+1}$. Here $\gl_{\pm2}=\C
e_{\pm2\d}$ and  $\gl_0={\mathfrak{so}}(m)$ with the set of positive
roots $\D_0^+=\DEP\bs\{2\d\}$.

A weight $\l=\l_0\d+\sum_{i=1}^m\l_i\es_i\in\fh^*$ is written as
\equan{weight}{\l=(\l_0\,|\,\l_1,....,\l_m).} We shall call $\l_i$
the {\it $i$-th coordinate} of $\l$. Denote by $\rho$ the half
signed sum of positive roots, and by $\rho_1$ the half sum of
positive odd roots. Then \equa{rho}{
\begin{array}{lll}
\rho=(1-m\,|\,m-1,m-2,...,0),&\rho_1=(m\,|\,0,...,0), &\mbox{if
}k=2m,\\[7pt]
\rho=(\frac12-m\,|\,m-\frac12,m-\frac32,...,\frac12),
&\rho_1=(m+\frac12\,|\,0,...,0),
 &\mbox{if }k=2m+1.
\end{array}
}
For convenience, we always denote $s=1$ if $k=2m$ and $s=\frac12$
if $k=2m+1$. Then $\rho=(s-m\,|\,m-s,...,1-s)$ and
$\rho_1=(m+1-s\,|\,0,...,0)$. Sometimes, it will be convenient to
use the so-called {\it $\rho$-translated notation} of a weight $\l$,
which will be always denoted by the same symbol with a tilde:
\equa{rho-translated}{\tilde\l=\l+\rho=(\tilde
\l_0\,|\,\tilde\l_1,....,\tilde\l_m).} Then $\tilde\l_0=\l_0+s-m$
and $\tilde\l_i=\l_i+m+1-s-i$ for $i>0$.

Denote by $W$ and $W_0$ the Weyl groups of $\gl$ and $\gl_0$
respectively. Then $W=W_0\times\Z_2$, where the nontrivial element
$\si\in \Z_2$ changes the sign of the $0$-th coordinate when acting
on a weight. $W_0$ is the Weyl group of ${\mathfrak{so}}(m)$, which
acts on a weight $\l\in\fh^*$ by permuting the coordinates
$\l_1,...,\l_m$ and also changing their signs (the number of sign
changes must be in $2s\Z_+$). Define the {\it dot action} of $W$ on
$\fh^*$ by \equa{dot-action}{w\cdot\l=w(\tilde\l)-\rho\mbox{ \ for \
}w\in W,\,\,\l\in\fh^*.} We remark that the dot action of $W_0$ on
$\fh^*$ is defined by $w\cdot\l=w(\l+\rho_{\bar0})-\rho_{\bar0}$,
where $\rho_{\bar0}=\rho+\rho_1$ is the half sum of positive even
roots. Since $\rho_1$ is $W_0$-invariant, this dot action coincides
with (\ref{dot-action}).
 We denote
\equa{l=si}{\l^\si=\si\cdot\l=(-\l_0+2(m-s)\,|\,\l_1,...,\l_m).}

Let $V=\oplus_{\l\in\fh^*}V_\l$ be a {\em weight module} over $\fg$,
where \equan{weight-space} {V_\l=\{v\in
V\,|\,hv=\l(h)v,\,\forall\,h\in\fh\}\mbox{ \ with \ }\dim
V_\l<\infty, } is the weight space of weight $\l$. The {\it
character} $\ch V$ is defined to be \equan{char-def} {\mbox{$ \ch
V=\sum\limits_{\l\in\fh^*}({\rm dim\,}V_\l) e^\l, $}} where $e^\l$
is the {\em formal exponential}, which will be regarded as an
element of an additive group isomorphic to $\fh^*$ under $\l\mapsto
e^\l$. Then $\ch V$ is an element of the {\em completed group
algebra} \equan{epsilon} {\mbox{$
\varepsilon=\bigl\{\sum\limits_{\l\in\fh^*}a_\l
e^\l\,\bigl|\,a_\l\in\C,\, a_\l=0\mbox{ except $\l$ is in a finite
union of ${\mathcal Q}_\L$}\bigr\}, $}} where for $\L\in\fh^*$,
\equan{Q} {\mbox{$ {\mathcal Q}_\L=\bigl\{\L-\sum\limits_{\a\in\D^+}
i_\a\a\in\fh^*\,\bigl|\,i_\a\in\Z_+\bigr\}. $}} Sometimes, we may
also work with elements in the group $\bar{\es}$ which is defined as
above with ${\mathcal Q}_\L$ replaced by \equa{bar-Q} {\mbox{$
{\bar{\mathcal Q}}_\L=\bigl\{\L+\sum\limits_{\a\in\D^+}
i_\a\a\in\fh^*\,\bigl|\,i_\a\in\Z_+\bigr\}. $}}

\subsection{Generalised Verma modules and Kac modules}
A weight $\l\in\fh^*$ is called {\it integral} if
\equa{integral}
{\l_0\in \Z\mbox{ \ \  and \ }\l_i\in\Z\mbox{ or }s+\Z.}
Denote by $P$ the set of integral weights. An integral weight $\l\in P$ is
{\it regular} if $|\tilde\l_1|,...,|\tilde\l_m|$ are distinct
number; {\it $\gl_0$-dominant} if
\equa{g0-dominant}{\l_0\ge0,\
\l_1\ge...\ge\l_{m-1}\ge|\l_m|, \mbox{ \ and further,
}\l_m\ge0\mbox{ if }k=2m+1.}
We denote by $P^{0+}$ the set of
integral $\gl_0$-dominant weights. Then every regular weight $\l$ is
$W_0$-conjugate under the dot action \refequa{dot-action} to a
unique integral $\gl_0$-dominant weight, which will be denoted by
$\l^+$ throughout the paper.

For $\l\in P^{0+}$, let $L_\l^{(0)}$ be the finite-dimensional
irreducible $\gl_0$-module with highest weight $\l$. Extend it to a
$\gl_0\oplus\gl_{+1}\oplus\gl_{+2}$-module by putting
$(\gl_{+1}\oplus\gl_{+2})L^{(0)}_\l=0$. Then the {\em generalised
Verma module} $V_\l$ is the induced module \cite{Kac2, Kac3}
\equa{Verma-module}{ V_\l={\rm
Ind}_{\gl_0\oplus\gl_{+1}\oplus\gl_{+2}}^{\gl}L^{(0)}_\l\cong
U(\gl_{-1}\oplus\gl_{-2}) \otimes_{\C}L^{(0)}_\l. }
One has the following easy result.
\begin{lemma}\label{char-Verma}
If $\l$ is an integral $\fg_0$-dominant weight, then
\equa{char-Verma-for} { \begin{aligned} \ch
V_\l&={\dis\frac{R_1}{R_{\bar0}}}\, \mbox{$\sum\limits_{w\in
W_0}$}\, \sign(w)e^{w(\l+\rho)}\\
&={\dis\frac1{R_{\bar0}}}\sum\limits_{w\in
W_0}\sign(w)w\Big(e^{\l+\rho_{\bar0}}\prod\limits_{\b\in\D_1^+}(1+e^{-\b})\Big),
\end{aligned}
}
where $\sign(w)$ is the signature of $w\in W$, and
\equan{l-0}
{\mbox{$
R_{\bar0}=\prod\limits_{\a\in\D_{\bar0}^+}(e^{\a/2}-e^{-\a/2}),\,\ \
R_1=\prod\limits_{\b\in\D_1^+}(e^{\b/2} +e^{-\b/2}). $}}
\end{lemma}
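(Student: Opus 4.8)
The plan is to read the character off the tensor‑product realisation $V_\l\cong U(\gl_{-1}\oplus\gl_{-2})\otimes_\C L^{(0)}_\l$ of \refequa{Verma-module}. Since $\fh\subseteq\gl_0$ acts on a pure tensor $u\otimes v$ by sending it to the weight $\mu+\nu$, where $\mu$ is the $\fh$-weight of $u$ under the adjoint action and $\nu$ the weight of $v$, the weight-space decomposition of $V_\l$ is the product of those of the two factors, so that
\[
\ch V_\l=\ch U(\gl_{-1}\oplus\gl_{-2})\cdot\ch L^{(0)}_\l .
\]
The whole computation then reduces to evaluating and recombining these two factors.

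For the first factor I would use the PBW theorem. The even summand $\gl_{-2}=\C e_{-2\d}$ is one-dimensional of weight $-2\d$ and contributes the symmetric-algebra factor $(1-e^{-2\d})^{-1}$, while the odd summand $\gl_{-1}$ carries the weights $-\b$, $\b\in\D_1^+$, and contributes the exterior-algebra factor $\prod_{\b\in\D_1^+}(1+e^{-\b})$; hence
\[
\ch U(\gl_{-1}\oplus\gl_{-2})=\frac{1}{1-e^{-2\d}}\prod_{\b\in\D_1^+}(1+e^{-\b}).
\]
For the second factor I would apply the classical Weyl character formula in $\gl_0=\mathfrak{so}(m)$, with Weyl group $W_0$ and positive roots $\D_0^+$. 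The one point requiring care is that the denominator $R_{\bar0}$ runs over \emph{all} positive even roots $\D_{\bar0}^+$, i.e.\ over $\D_0^+$ together with the extra root $2\d$, whereas the Weyl denominator of $\gl_0$ involves only $\D_0^+$. Using that $W_0$ fixes $\d$, that the half-sum of $\D_0^+$ equals $\rho_{\bar0}-\d$, and that the $\gl_0$ Weyl denominator therefore equals $R_{\bar0}/(e^{\d}-e^{-\d})$, I expect to obtain
\[
\ch L^{(0)}_\l=\frac{1-e^{-2\d}}{R_{\bar0}}\sum_{w\in W_0}\sign(w)\,e^{w(\l+\rho_{\bar0})}.
\]

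Multiplying the two factors, the awkward $(1-e^{-2\d})$ cancels exactly against the $\gl_{-2}$ contribution, leaving
\[
\ch V_\l=\frac{1}{R_{\bar0}}\sum_{w\in W_0}\sign(w)\,e^{w(\l+\rho_{\bar0})}\prod_{\b\in\D_1^+}(1+e^{-\b}).
\]
Since $W_0$ permutes $\D_1^+$ (equivalently $\rho_1$ is $W_0$-invariant), the product $\prod_{\b\in\D_1^+}(1+e^{-\b})$ is itself $W_0$-invariant and may be pulled inside the action of $w$, which yields the second displayed formula of the lemma verbatim. To reach the first formula I would then rewrite $\prod_{\b\in\D_1^+}(1+e^{-\b})=e^{-\rho_1}R_1$ and use $\rho_{\bar0}=\rho+\rho_1$ together with the $W_0$-invariance of $\rho_1$ to replace $e^{w(\l+\rho_{\bar0})}$ by $e^{\rho_1}e^{w(\l+\rho)}$; the factors $e^{\pm\rho_1}$ cancel and produce $\tfrac{R_1}{R_{\bar0}}\sum_{w\in W_0}\sign(w)e^{w(\l+\rho)}$.

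The only genuine obstacle is the bookkeeping around the single even root $2\d$: one must keep it separated from the $\mathfrak{so}(m)$-roots and confirm that its two appearances---in the $\gl_{-2}$ part of $U(\gl_{-1}\oplus\gl_{-2})$ and in the gap between $\D_0^+$ and $\D_{\bar0}^+$---cancel. Everything else is a sequence of formal identities in the completed group algebra, justified throughout by the $W_0$-invariance of $\rho_1$ and of the odd-root product $\prod_{\b\in\D_1^+}(1+e^{-\b})$.
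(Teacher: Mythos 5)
Your proposal is correct, and since the paper states this lemma without proof (``One has the following easy result''), your argument is precisely the standard computation being tacitly invoked: $\ch V_\l=\ch U(\gl_{-1}\oplus\gl_{-2})\cdot\ch L^{(0)}_\l$ by PBW, the Weyl character formula for $\gl_0$, and the $W_0$-invariance of $\rho_1$ and of $\prod_{\b\in\D_1^+}(1+e^{-\b})$. Your bookkeeping around the root $2\d$ (the cancellation of $1-e^{-2\d}$ from the $S(\gl_{-2})$ factor against the discrepancy between $R_{\bar0}$ and the $\gl_0$ Weyl denominator, with the shift $\rho_{\bar0}-\d$) is exactly the point that makes the identity work, and it is handled correctly.
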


Denote by $L_\l$ the unique irreducible quotient module of $V_\l$.
For convenience we shall always fix a highest weight vector $v_\l$
in $V_\l$ or $L_\l$. Obviously, for any $\l\in P^{0+}$, the
generalised Verma module $V_\l=\oplus_{i\in\Z_-} V_\l^{(i)}$ is
$\Z$-graded with respect to the eigenvalue of $\frac12 h_{2\d}$ such
that each $V_\l^{(i)}$ is a finite-dimensional $\gl_0$-module.

A weight $\l$ is called {\it integral $\fg$-\it dominant} if it
satisfies the conditions \refequa{integral}, \refequa{g0-dominant}
and \equa{d-condition}{\ell=\l_0\in\Z_+,\mbox{ \ and if }0\le\ell\le
m-1\mbox{ then }\l_{\ell+1}=\l_{\ell+2}=...=\l_m=0.} Let $P^+$
denote the set of integral $\fg$-dominant weights. It was shown in
\cite{K77} that $\dim L_\l<\infty$ if and only if $\l\in P^+$. In
this case, we can define the quotient module
\equa{Kac-module}{K_\l=V_\l/U(\gl)f_{2\d}^{\l_0+1}v_\l,} which is
usually referred to as the {\it Kac module} \cite{Kac2, Kac3}.
Obviously, by \refequa{Verma-module} and the PBW Theorem,
\equan{Kac-dim}{{\rm dim\,}K_\l\le(\l_0+1)2^{\ssc\,{\rm
dim\,}\gl_{-1}}\,{\rm dim\,}L_\l^{(0)}<\infty.}

\subsection{Atypicality and central characters}
An integral $\gl_0$-dominant weight $\l$ is called {\it atypical
with atypical root $\g=\d+\es_\ell$}  (resp. $\g=\d-\es_\ell$) if
$\tilde\l_0=\tilde\l_\ell$ (resp. $\tilde\l_0=-\tilde\l_\ell$) for
some $1\le\ell\le m$. A weight that is not atypical is called {\it
typical}.
\begin{remark}\label{l-0=0=l-m=0}\rm
In case $\tilde\l_0=\tilde\l_\ell=0$ (this can only occur when
$k=2m$), both roots $\g_\pm=\d\pm\es_\ell$ are atypical roots of
$\l$. In this case, we always choose $\g=\d-\es_\ell$.
\end{remark}
\begin{definition}\label{tail} \rm
 (cf.~\cite{Se98}) In case
$\tilde\l_0=-\tilde\l_\ell$, the atypical root $\g=\d-\es_\ell$ is
called a {\it tail atypical root} (cf.~Example \ref{nex-pre}).
\end{definition}
Assume $\l\in P^{0+}$ is atypical with atypical root
$\g=\d+\es_\ell$ or $\d-\es_\ell$. Then conditions $\l_0\in\Z$ and
$\tilde\l_0=\pm\tilde\l_\ell$ force $\l_i\in\Z$ for all $i$. Denote
\equa{atypical-type}{\bar\l=(|\tilde\l_1|,...,|\tilde\l_{\ell-1}|,
|\tilde\l_{\ell+1}|,...,|\tilde\l_m|),\ \ \
S(\bar\l)=\{|\tilde\l_1|,...,|\tilde\l_{\ell-1}|,|\tilde\l_{\ell+1}|,...,|\tilde\l_m|\}.}
(Note from \refequa{g0-dominant} that $\tilde\l_m$ is the only
possible negative number in \refequa{atypical-type}.) We call
$\bar\l$ the {\it atypicality type} of $\l$.

Denote by $Z(\gl)$ the center of the universal enveloping algebra $U(\gl)$.
An element $z\in Z(\gl)$ can be uniquely written in the form
$z=u_z+\sum_i u_i^-u_i^0u_i^+$, where $u_z,u_i^0\in
U(\fh),\,u_i^{\pm}\in\gl_{\pm}U(\gl_{\pm})$, and $\gl_{\pm}$ is the
subalgebra of $\gl$ spanned by the positive/negative root vectors.
Then the map $z\mapsto u_z$ gives rise to the {\it Harish-Chandra
homomorphism} \equa{HC}{HC:Z(\gl)\to\C[\fh^*],\ \ \
\,HC(z)(\l)=u_z(\l).} Recall that a {\it central character} is a
homomorphism $Z(\gl)\to\C$. Thus any $\l\in\fh^*$ defines a central
character $\chi_\l$ by the rule $\chi_\l(z)=HC(z)(\l)$, such that
all element $z\in Z(\gl)$ acts on $L_\l$ as the scalar $\chi_\l(z)$.
One has \cite{Se98} \equa{central-atypical}{\chi_\l=\chi_\mu\ \ \
\Longrightarrow\ \ \ \bar\l=\bar\mu\mbox{ \ \ for all atypical
}\l,\mu\in P^+.}

The following result is due to Kac \cite{Kac2, Kac3}:
\begin{proposition}\label{typical-prop}
If $\l$ is an integral $\fg$-dominant  typical weight, then
$L_\l=K_\l$, and \equa{typical-char} { \ch L_\l=\ch
K_\l={\dis\frac{R_1}{R_{\bar0}}}\, \mbox{$\sum\limits_{w\in W}$}\,
\sign(w)e^{w(\l+\rho)}. }
\end{proposition}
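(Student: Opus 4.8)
The plan is to prove the two assertions separately—the irreducibility $L_\l=K_\l$ and the character identity—and to isolate the single point where typicality is genuinely used. Write $F_\l:=\frac{R_1}{R_{\bar0}}\sum_{w\in W}\sign(w)e^{w(\l+\rho)}$ for the claimed value. First I would record that $F_\l$ is $W$-invariant, as it must be to be the character of a finite-dimensional module: $\si$ maps $\D_1^+$ bijectively onto $-\D_1^+$, and since each factor $e^{\b/2}+e^{-\b/2}$ of $R_1$ is invariant under $\b\mapsto-\b$, the element $R_1$ is $\si$-invariant and hence (being also $W_0$-invariant) $W$-invariant; as $R_{\bar0}$ and $\sum_{w\in W}\sign(w)e^{w(\l+\rho)}$ are $W$-anti-invariant and $\sign(\si)=-1$, the ratio $F_\l$ is $W$-invariant. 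Decomposing $W=W_0\sqcup\si W_0$ and using $\si(\l+\rho)=\l^\si+\rho$ from \refequa{l=si}, one matches the remaining sum against Lemma \ref{char-Verma} to get $F_\l=\ch V_\l-\ch V_{\l^\si}$, where $\ch V_{\l^\si}$ is the expression \refequa{char-Verma-for} with $\l$ replaced by $\l^\si$; because the $\es$-coordinates of $\l^\si$ coincide with those of $\l$ and are $\gl_0$-dominant, this is a genuine non-negative generalised Verma character with top weight $\l^\si$.

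The next step is a preliminary observation: typicality forces $\tilde\l=\l+\rho$ to be regular and dominant for $W$. I argue the contrapositive. If $\l_0\le m-s$ then $\ell=\l_0\le m-1$, so the $\gl$-dominance condition \refequa{d-condition} gives $\l_{\ell+1}=\cdots=\l_m=0$, whence $\tilde\l_{\ell+1}=m+1-s-(\ell+1)=m-s-\ell=-\tilde\l_0$, and $\l$ is atypical with tail atypical root $\d-\es_{\ell+1}$. Hence typicality implies $\l_0\ge m$, so $\tilde\l_0=\l_0+s-m\ge s>0$. Combined with $\gl_0$-dominance (which makes $\tilde\l_1>\cdots>\tilde\l_{m-1}>|\tilde\l_m|$) and typicality ($\tilde\l_0\neq\pm\tilde\l_i$), the numbers $\tilde\l_0,\dots,\tilde\l_m$ have pairwise distinct absolute values with $\tilde\l_0>0$, so $\tilde\l$ is $W$-regular-dominant. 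In particular $\l-\l^\si=(\l_0-m+s)\,2\d\in\Z_+(2\d)$, so $\l^\si<\l$.

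For irreducibility I would use central characters. Every composition factor $L_\mu$ of the finite-dimensional module $K_\l$ satisfies $\mu\in P^+$, $\mu\le\l$ and $\chi_\mu=\chi_\l$. For typical $\l$, Kac's description of central characters \cite{Kac2,Kac3} (the typical counterpart of \refequa{central-atypical}) gives $\chi_\mu=\chi_\l\Rightarrow\tilde\mu\in W\tilde\l$. Since $\tilde\l$ is regular, so is $\tilde\mu$, so $\mu$ is again typical; as $\mu\in P^+$, the previous step shows $\tilde\mu$ is dominant, and two dominant members of one $W$-orbit coincide, so $\mu=\l$. Thus $L_\l$ is the only composition factor and $K_\l=L_\l$.

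Finally, for the character I would establish the short exact sequence
$$0\longrightarrow V_{\l^\si}\stackrel{\phi}{\longrightarrow}V_\l\longrightarrow K_\l\longrightarrow0,$$
where $\phi$ is induced by a $\gl$-singular vector of weight $\l^\si$ in $V_\l$ whose image is the kernel $U(\gl)f_{2\d}^{\l_0+1}v_\l$ of the defining quotient map. Here $f_{2\d}^{\l_0+1}v_\l$ is the singular vector for the $\mathfrak{sl}_2$-triple $\langle e_{2\d},h_{2\d},f_{2\d}\rangle$ (on whose highest weight vector $h_{2\d}$ acts by $\l_0$), lying strictly below the $\gl$-singular vector at $\l^\si$; the two generate the same submodule. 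Taking characters and invoking $F_\l=\ch V_\l-\ch V_{\l^\si}$ then yields $\ch L_\l=\ch K_\l=F_\l$. The hard part—and the only place typicality is essential—is the construction of the weight-$\l^\si$ singular vector together with the injectivity of $\phi$, i.e. that $V_{\l^\si}$ embeds in $V_\l$ rather than merely surjecting onto $\Im\phi$. Both rest on the non-degeneracy of the contravariant (Shapovalov) form on $V_\l$, equivalently on the absence of any further dominant weight linked to $\l$, which is exactly what typicality guarantees; this is the technical heart of the argument.
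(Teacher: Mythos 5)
The paper offers no proof of this proposition at all --- it is quoted as Kac's theorem \cite{Kac2, Kac3} --- so your attempt has to stand on its own, and as written it does not. The first three steps are sound: the identity $F_\l=\ch V_\l-\ch V_{\l^\si}$ via $W=W_0\sqcup\si W_0$ and Lemma \ref{char-Verma}; the observation that typicality together with \refequa{d-condition} forces $\l_0\ge m$, hence $\tilde\l_0\ge s>0$ and $\tilde\l$ regular $W$-dominant (your contrapositive, exhibiting the tail atypical root $\d-\es_{\ell+1}$ when $\l_0\le m-s$, is exactly right); and $K_\l=L_\l$ from the typical linkage principle (modulo the unstated but easy remark that $\dim(K_\l)_\l=1$, so $L_\l$ occurs exactly once). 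The genuine gap is the last step. Once $F_\l=\ch V_\l-\ch V_{\l^\si}$ is known, the exactness of $0\to V_{\l^\si}\to V_\l\to K_\l\to0$ \emph{is} the character formula, and you assert rather than prove all three of its ingredients: the existence of a $\gl$-singular vector of weight $\l^\si$ in $V_\l$, the equality of the submodule it generates with $U(\gl)f_{2\d}^{\l_0+1}v_\l$, and the injectivity of $\phi$. Worse, the appeal to ``non-degeneracy of the contravariant form on $V_\l$'' is wrong as stated: that form is degenerate precisely because $U(\gl)f_{2\d}^{\l_0+1}v_\l\ne0$ is its radical; what typicality guarantees (by Kac's theorem, i.e.\ the statement under proof) is non-degeneracy of the induced form on $K_\l$, which you already have from central characters and which says nothing about the \emph{structure} of the radical. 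So the ``technical heart'' you flag is not merely deferred --- the tool you defer it to cannot supply it.

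The gap is closable within your own framework, without the exact sequence. Since each graded piece $V_\l^{(i)}$ is a semisimple $\fg_0$-module, every composition factor $L_\mu$ of $V_\l$ contributes a $\fg_0$-highest weight vector of weight $\mu$ in $V_\l$ itself; Lemma \ref{gl0-highest}(i) then gives $\l-\mu=2s\th\d+\sum_i(\th_i(\d-\es_i)+\bar\th_i(\d+\es_i))$ with $\th_i,\bar\th_i\in\{0,1\}$, so $|\tilde\mu_i-\tilde\l_i|\le1$ for $i\ge1$, and combined with the typical linkage $\tilde\mu\in W\tilde\l$ and the gaps $\tilde\l_i-\tilde\l_{i+1}\ge1$ this pins the $\es$-part of $\tilde\mu$ to that of $\tilde\l$ and leaves $\tilde\mu_0=\pm\tilde\l_0$, i.e.\ $\mu\in\{\l,\l^\si\}$. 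The same argument at $\l^\si$, using $(\l^\si)^\si=\l\not\le\l^\si$, shows $V_{\l^\si}$ is irreducible. Hence $\ch L_\l=\ch V_\l-a\,\ch V_{\l^\si}=F_\l+(1-a)\ch V_{\l^\si}$ with $a=[V_\l:L_{\l^\si}]\ge1$ (the radical is infinite-dimensional while $L_\l$ is not); since $\ch L_\l$ and $F_\l$ are $W$-invariant whereas $\ch V_{\l^\si}$ is not (its weights have $0$-th coordinate bounded above but unbounded below, so $\si$-invariance of the weight multiplicities fails), one gets $a=1$ and the formula. Finally, a small slip worth fixing: for $k=2m+1$ one has $\l-\l^\si=(2(\l_0-m)+1)\d$, which is \emph{not} in $\Z_+(2\d)$; the conclusion $\l^\si<\l$ survives only because $\d\in\D_1^+$ in that case.
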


\section{Structure of generalised Verma modules}

In this section, we shall completely determine the structure of generalised Verma
modules $V_\l$ for all atypical $\l\in P^+$.

\subsection{Primitive weight graphs}\label{graph}
For a $\gl$-module $V$, a nonzero $\gl_{0}$-highest weight vector
$v\in V$ is called a {\it primitive vector} if $v$ generates an
indecomposable $\fg$-submodule and there exists a $\gl$-submodule
$W$ of $V$ such that $v\notin W$ but $\gl_{+1}v\in W$. If we can
take $W=0$, then $v$ is called a {\it strongly primitive vector} or
a {\it $\gl$-highest weight vector}. The weight of a primitive
vector is called a {\em primitive weight}, and the weight of a
strongly primitive vector a {\em strongly primitive weight} or a
{\em $\gl$-highest weight}.
For a primitive weight $\l$ of $V$, we often use $v_\l$ to denote a
primitive vector of weight $\l$.

\begin{notation}\label{notation-PV}\rm Denote by $P(V)$ the set
of primitive weights with multiplicities. For $\mu,\nu\in P(V)$, if
$\mu\ne\nu$ and $v_\nu\in U(\gl)v_\mu$, we say that $\nu$ is {\em
derived from} $\mu$ and write
$$\mbox{
$\nu\dlar\mu$ \ \ \ or \ \ \ $\mu\drar\nu$. }$$ If $\mu\drar\nu$ and
there exists no $\l\in P(V)$ such that $\mu\drar\l\drar\nu$, then we
say that $\nu$ is {\it directly derived from} $\mu$ and write
$$\mbox{
$\mu\rrar\nu$ \ \ \ or \ \ \ $\nu\llar\mu$. }$$  Sometimes for
convenience, we also use symbols $\mu\link\l$ to denote either
$\mu\rrar\l$ or $\mu\llar\l$. Suppose $V'$ is a submodule of $V$ and
$\mu\in P(V)\bs P(V')$, we use $V'\link\mu$ to indicate
$\nu\link\mu$ for some $\nu\in P(V')$.\end{notation}
\begin{definition}
\label{defi6.1}\rm (cf.~\cite[Definition 6.2]{SuZ1}) Suppose every
composition factor of $V$ is a highest 
weight module. 
Then we can associate
$P(V)$ with a directed graph, still denoted by $P(V)$, in the
following way: the vertices of the graph are elements of $P(V)$. Two
elements $\l$ and $\mu$ are connected by a single directed edge
$($i.e., the two weights are linked$)$ pointing toward $\mu$ if and
only if $\mu$ is directly derived from $\l$. We shall call this
graph the {\it primitive weight graph of $V$}.
\end{definition}

A {\it full subgraph} $S$ of $P(V)$ is a subset of $P(V)$ which
contains all the edges linking vertices of $S$. We call a full
subgraph $S$ {\it closed} if for any $\eta\in P(V)$ and $ \mu,\nu\in
S$,
\equan{closed} { \mu\drar\eta\drar\nu \ \ \ \ \
\Longrightarrow \ \ \ \ \eta\in S. }

It is clear that a module is indecomposable if and only if its
primitive weight graph is {\it connected} (in the usual sense). It
is also clear that a full subgraph of $P(V)$ corresponds to a
subquotient of $V$ if and only if it is closed. Thus a full subgraph
with only $2$ weights is always  closed.


For a directed graph $\G$, we denote by $M(\G)$ any module with
primitive weight graph $\G$ if such a module exists. If $\G$ is a
closed full subgraph of $P(V)$, then $M(\G)$ always exists, which is
a subquotient of $V$.

\subsection{Some technical lemmas}
Let $P^{0+}_{\bar\l}$ be the set of integral $\gl_0$-dominant
atypical weights with atypical type $\bar\l$, and set
$P^+_{\bar\l}=P^+\cap P^{0+}_{\bar\l}$. If $\l\in P^{0+}_{\bar\l}$
has atypical root $\g\in\D_1^+$, we let $a_\pm\in\N=\{1,2,...\}$ be the
smallest such that $\l+a_+\g$ and $\l-a_-\g$ are $\gl_0$-regular
in the sense that $(\l+\rho\pm a_\pm\g, \alpha)\ne 0$
for all roots $\alpha$ of $\gl_0$. If $\g=\d + \es_\ell$ or $\d-\es_\ell$,
then $a_\pm$ are the smallest positive integers such that
$|\tilde\l_\ell\pm a_\pm|\notin S(\bar\l)$
(cf.~(\ref{atypical-type})). Now we defined $\l\nex,\l\pre\in
P^{0+}_{\bar\l}$ by
\equa{l-nex-pre}{\begin{array}{llll}
\l\nex=(\l+a_+\g)^+,&\l\pre=(\l-a_-\g)^+,
\end{array}}
where we recall that given an integral weight $\mu$,
we denote by $\mu^+$ the integral $\gl_0$-dominant weight
$W_0$-conjugate to $\mu$ under the dot action (\ref{dot-action}).
We call $\nex$ and $\pre$ the {\it raising} and
{\it lowering} operators. Note that
\equa{si-nex}{(\l^\si)\nex=(\l\pre)^\si,\ \
(\l^\si)\pre=(\l\nex)^\si\mbox{ \ \ if \ }\tilde\l_0\ne0.}
\begin{definition} \label{l-0=0=l-m=0+}\rm
Keep notation introduced above. If $\tilde\l_0\ne0$, we let
$\l\pre_+=\l\pre$ and $\l\nex_+=\l\nex$. If $\tilde\l_0=0$, we take the atypical root $\g=\d-\es_m$
(cf.~Remark \ref{l-0=0=l-m=0})
and let
$$\l\nex_-=\l\nex,\ \ \ \ \l\nex_+=(\l+a_+\g_+)^+,\ \ \ \
\l\pre_-=(\l-a_-\g_+)^+,\ \ \ \ \l\pre_+=\l\pre,$$
where $\g_+=\d+\es_m$, which is another atypical root.
\end{definition}
\begin{convention}\rm \label{con-nothing}
If an undefined symbol appear in an expression, we regard it as
nothing; for instance, in case $\tilde\l_0=\tilde\l_\ell\ne0$,
symbols $\l\nex_-,\,\l\pre_-$ mean nothing.
\end{convention}

\begin{example}\label{nex-pre}
\rm \begin{enumerate}\item  $\l=(1\,|\,2,1,-1)$ when $k=6$. We shall always use $\line(0,1){5}\line(1,0){10}\line(0,1){5}$ to
indicate a tail atypical root (cf.~Definition \ref{tail}), and use
$\line(0,1){5}\put(0,5){\line(1,0){10}\line(0,-1){5}}\hspace*{10pt}$
to indicate a non-tail atypical root. Then
$$\tilde\l=(-1\UPL{10}{30}\,|\,4,2,
-1\UPR{10}{30}),$$ and $a_+=1,$ $a_-=2$. So
$$\widetilde{\l\nex}=(0\,|\,4,2,0),\,\ \ \ \ \widetilde{\l\pre}=(-3\,|\,4,3,-2),$$ and
$\l\nex=(2\,|\,2,1,0),\,\l\pre=(-1\,|\,2,2,-2).$
 \item  If $k=7$
and $\l=(2\,|\,2,0,0)$, then
\equan{ex1}{\tilde\l=(-\frac12\DNL{11}{30}\,|\,\frac92,\frac32,
\frac12\DNR{11}{30}),} and $a_+=2,\,a_-=1$. Thus
$$\mbox{$\widetilde{\l\nex}=(\frac12\,|\,\frac92,\frac32,\frac12),
\,\ \ \ \ \widetilde{\l\pre}=(-\frac52\,|\,\frac92,\frac52,\frac32),$}$$ and
$\l\nex=(3\,|\,2,0,0),\,\l\pre=(0\,|\,2,1,1)$.
\end{enumerate}\end{example}

\begin{remark}\label{rem-l-m}\rm
In the case $k=2m$, there exists an outer automorphism $\tau$ of
$\gl$ induced by the symmetry automorphism
\equa{tau-auto}{\tau(\es_{m-1}-\es_m)=\es_{m-1}+\es_m,} of the
Dynkin diagram, which changes the sign of the $m$-th coordinate of a
weight. Thus the structure of a generalised Verma module $V_\l$ with
$\l_m<0$ is the same as the structure of $V_\l$ with $\l_m>0$.
Therefore, when considering the generalised Verma module $V_\l$ (or
Kac module $K_\l$, or irreducible module $L_\l$), we can always
suppose
\equa{assume-l-m}{\l_m\ge0.}
However, it should be pointed out that $V_\l$ may contain some
primitive weight $\mu$ with $\mu_m<0$.
\end{remark}

In the following, we always assume that $\bar\l$ is a fixed
atypical type such that its coordinates
$\tilde\l_1,...,\tilde\l_{m-1}$ satisfy
$\tilde\l_1>\cdots>\tilde\l_{m-1}\ge0$ (cf.~(\ref{g0-dominant}),
(\ref{atypical-type}) and (\ref{assume-l-m})).
\begin{definition}\label{defi-aty-type-set}
\rm
Let $j$ be the smallest non-negative integer such that $a:=j+1-s$
$\notin S(\bar\l)$ (cf.~(\ref{atypical-type})). We define $\l^{(0)}$
by
\equa{l-(0)}{\widetilde{\l^{(0)}}=\l^{(0)}+\rho=(-a\,|\,\tilde\l_1,...,
 \tilde\l_{m-1-j},a,\tilde\l_{m+1-j},...,\tilde\l_{m}),}
where we have re-labeled $\tilde\l_{m-j},...,\tilde\l_{m-1}$ by
$\tilde\l_{m+1-j},...,\tilde\l_{m}$ for convenience.
\begin{enumerate}
\item \label{l-(i)} If $k=2m+1$ or $k=2m$ with $0\in S(\bar\l)$, we define
$\l^{(\pm i)}$ (for $i>0$) inductively by
$$\mbox{$\l^{(i)}=(\l^{(i-1)})\nex$, \ \ \
$\l^{(-i)}=(\l^{(1-i)})\pre$,}$$
and set $\l_+^{(\pm i)}=\l^{(\pm i)}$ for all $i>0$.
\item
If $k=2m$ and $0\notin S(\bar\l)$, we have $a=0$. Take the atypical root
$\g=\d-\es_m$ for the corresponding $\l^{(0)}$ defined by (\ref{l-(0)}) as
in Definition \ref{l-0=0=l-m=0+},
and define $\l^{(\pm i)}$ for $i>0$ by
\[\l^{(1)}=(\l^{(0)})\nex_+, \ \ \ \l^{(i)}=(\l^{(i-1)})\nex , \ \ \
\ \l^{(-i)}=(\l^{(1-i)})\pre.
\]
[Recall that $\g_+=\d+\es_m$ is also an atypical root of $\l^{(0)}$.]
Let $\l_+^{(\pm i)}=\l^{(\pm i)}$ for $i>0$, and further define $\l_-^{(\pm i)}$
by $$\l_-^{(1)}=(\l^{(0)})\nex,\ \ \ \l_-^{(i)}=(\l_-^{(i-1)})\nex,
\ \ \ \ \ \l_-^{(-1)}=(\l^{(0)})\pre_-,\ \ \
\l_-^{(-i)}=(\l_-^{(1-i)})\pre.$$
\end{enumerate}
\end{definition}
\begin{example}\label{k=4-l=1+-}\rm
If $k=4,\,\bar\l=(1)$, then $\widetilde{\l^{(0)}}=(0\,|\,1,0)$, and
$$\begin{array}{llll}
\wt{\l^{(1)}}=(2\,|\,2,1),&\wt{\l^{(2)}}=(3\,|\,3,1),&\wt{\l^{(-1)}}
=(-2\,|\,2,1),&\wt{\l^{(-2)}}=(-3\,|\,3,1),\\[4pt]
\wt{\l^{(1)}_-}=(2\,|\,2,-1),&\wt{\l^{(2)}_-}=(3\,|\,3,-1),
&\wt{\l^{(-1)}_-}=(-2\,|\,2,-1),& \l^{(-2)}_-=(-3\,|\,3,-1).
\end{array}$$
\end{example}

From the definition and condition (\ref{d-condition}), we
immediately obtain the following.
\begin{lemma}\label{l-0-unique}
\begin{enumerate}
\item
$\!P^{0+}_{\bar\l}\!=\!\{\l^{(i)}_\pm{\sc\,}|{\sc\,}i\!\in\!\Z\},$\,and
$
P^+_{\bar\l}\!=\!\{\l^{(i)}_\pm{\sc\,}|{\sc\,}i\!\in\!\Z_+\}\,($cf.\,Convention
$
\ref{con-nothing})$.
\item
$\l^{(0)}$ is $\fg$-dominant and is the unique $\fg$-dominant weight
among the $\l^{(i)}_\pm$ which has a tail atypical root.
\end{enumerate}
\end{lemma}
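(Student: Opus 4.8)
The plan is to parametrise $P^{0+}_{\bar\l}$ by the $\rho$-shifted zeroth coordinate $\tilde\l_0$ of its members and to read the operators $\nex$ and $\pre$ as the passage to the next admissible value of this parameter. The forward inclusion $\{\l^{(i)}_\pm\mid i\in\Z\}\subseteq P^{0+}_{\bar\l}$ is immediate: by \refequa{l-nex-pre} and Definition \ref{l-0=0=l-m=0+} the operators $\nex,\pre$ map $P^{0+}_{\bar\l}$ into itself, so it suffices to check $\l^{(0)}\in P^{0+}_{\bar\l}$, and this follows from \refequa{l-(0)} once one notes that inserting the value $a=j+1-s$ at position $m-j$ respects the $\gl_0$-dominant ordering \refequa{g0-dominant} and produces the tail relation $\tilde\l_0=-a$. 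For the reverse inclusion I would observe that an atypical $\gl_0$-dominant weight of fixed type $\bar\l$ is completely determined by $\tilde\l_0$ together with the sign of the tail coordinate $\tilde\l_m$; indeed $|\tilde\l_0|$ fixes the atypical magnitude, hence the whole multiset $\{\,|\tilde\l_i|\,\}=S(\bar\l)\cup\{|\tilde\l_0|\}$, and $\gl_0$-dominance then fixes every coordinate up to that one sign. The sign datum is relevant only in the $D$-type case $k=2m$ with $0\notin S(\bar\l)$, where the symmetry $\tau$ of Remark \ref{rem-l-m} accounts precisely for the two sequences $\l^{(i)}$ and $\l^{(i)}_-$. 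Since choosing $a_\pm$ \emph{smallest} forces $\nex$ and $\pre$ to move $\tilde\l_0$ to the next admissible value without skipping any, the sequences exhaust all admissible values, giving part (1).

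Next I would establish $\fg$-dominance. By \refequa{d-condition} this means $\l_0=\tilde\l_0+m-s\in\Z_+$ together with the tail-vanishing clause. The crucial computation is that $\l^{(0)}$ satisfies \refequa{d-condition}: one finds $\l^{(0)}_0=m-1-j\ge0$, while the minimality of $j$ forces $\{1-s,\dots,j-s\}\subseteq S(\bar\l)$, so these values fill the tail positions $\tilde\l_{m-j+1},\dots,\tilde\l_m$ and, together with the inserted atypical value at position $m-j$, yield $\l^{(0)}_{m-j}=\cdots=\l^{(0)}_m=0$. Hence $\l^{(0)}\in P^+_{\bar\l}$, and by construction it carries a tail atypical root.

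For the uniqueness in part (2) and the description $P^+_{\bar\l}=\{\l^{(i)}_\pm\mid i\in\Z_+\}$ I would exploit that $\tilde\l_0$ strictly increases with $i$. A tail atypical root means $\tilde\l_0\le0$; combined with $\fg$-dominance, the rigid tail-vanishing clause of \refequa{d-condition} pins the atypical value to be the smallest number $\ge 1-s$ not in $S(\bar\l)$, namely $a=j+1-s$, which forces the weight to equal $\l^{(0)}$. Consequently each $\l^{(-i)}$ with $i>0$, still carrying a tail root, fails $\fg$-dominance. On the other hand, for $i\ge1$ the minimality of $a_+$ gives $a_+\ge j+1$ (since $a-1,a-2,\dots,1-s$ all lie in $S(\bar\l)$), whence $\tilde\l^{(1)}_0\ge s>0$, so $\l^{(i)}_0\ge m$ and the tail clause of \refequa{d-condition} becomes vacuous, making every such $\l^{(i)}_\pm$ $\fg$-dominant.

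The step I expect to be the main obstacle is the bookkeeping that guarantees surjectivity of the enumeration: one must check that $\nex$ and $\pre$ skip no admissible value of $\tilde\l_0$ as the atypical coordinate crosses from the tail regime $\tilde\l_0<0$ to the non-tail regime $\tilde\l_0>0$, and one must correctly separate off the second sequence $\l^{(i)}_-$ governed by $\tau$ in the $D$-type case with $0\notin S(\bar\l)$. Both points are controlled by the minimality of $a_\pm$ and by the case distinction built into Definition \ref{defi-aty-type-set}, but they call for a careful case analysis rather than a single uniform formula.
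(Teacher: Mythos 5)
The paper offers no argument for this lemma at all --- it is introduced with ``From the definition and condition (\ref{d-condition}), we immediately obtain the following'' --- so your direct verification is exactly the intended route, and its main computations are sound: $\l^{(0)}_0=m-1-j\ge0$ with $\l^{(0)}_{m-j}=\cdots=\l^{(0)}_m=0$ by minimality of $j$; the no-skip property of $\nex$ and $\pre$ from minimality of $a_\pm$, which gives exhaustion of the admissible values of $\tilde\l_0$; and, crucially, the converse pinning argument that a $\fg$-dominant weight of type $\bar\l$ with a tail atypical root must equal $\l^{(0)}$ (the tail-vanishing clause of (\ref{d-condition}) forces the atypical coordinate to sit at position $\l_0+1$ with value $m-s-\l_0$ and forces $1-s,\dots,m-1-s-\l_0$ to lie in $S(\bar\l)$, so $m-1-\l_0$ is the minimal $j$ of Definition \ref{defi-aty-type-set}).

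One step, however, is false as stated: it is not true that every descending weight ``still carries a tail root.'' In the case $k=2m$ with $0\notin S(\bar\l)$, whenever $|\tilde\l_0|<\min S(\bar\l)$ the atypical coordinate of the minus-sequence weight $\l^{(-i)}_-$ occupies position $m$ with \emph{negative} sign (the $W_0$-rearrangement only permits even numbers of sign changes), so $\tilde\l_0=\tilde\l_m$ and the atypical root is $\d+\es_m$, which is non-tail by Definition \ref{tail}. Concretely, for $k=4$ and $\bar\l=(2)$ one has $\wt{\l^{(-1)}_-}=(-1\,|\,2,-1)$, i.e.\ $\l^{(-1)}_-=(0\,|\,1,-1)$, with atypical root $\d+\es_2$; your tail-root criterion does not apply to it, yet it must be excluded from $P^+_{\bar\l}$. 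The repair is cheap and stays inside your framework: for any weight of type $\bar\l$ with $\tilde\l_0<0$ one has $\l_0\le m-2$ when $k=2m$ (or $\l_0\notin\Z_+$), so the clause of (\ref{d-condition}) is in force and would require $\l_{\l_0+1}=\cdots=\l_m=0$, in particular $\tilde\l_m=1-s=0$; this is impossible in the case $0\notin S(\bar\l)$, since there no coordinate of $\l^{(-i)}_\pm$ vanishes (in the example, $\l_0=0\le m-1$ but $\l_1=1\ne0$). This uniform use of the clause disposes of all descending weights in that case, tail root or not. With this patch, together with the sign-tracking along the two sequences that you defer to ``careful case analysis'' (which is indeed governed by the parity constraint on sign changes in $W_0$), your proof is complete.
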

Now we shall investigate the generalised Verma module $V_\l$ for $\l\in
P^{0+}_{\bar\l}$ satisfying (\ref{assume-l-m}). By the PBW
Theorem, $U(\gl_{-1}\oplus\gl_{-2})$ has a basis
\[
B=\Big\{f_{\Theta}\,\Big|\,\Theta\in\Z_+\times\{0,1\}^{2m}\Big\},
\]
where each $f_\Theta$ is an ordered product of the form
$
f_\Theta=f_{2s \d}^\th f_{\d-\es_1}^{\th_1}\cdots
f_{\d-\es_m}^{\th_m}f_{\d+\es_m}^{\bar\th_m}\cdots
f_{\d+\es_1}^{\bar\th_1}
$
for
$\Theta=(\th,\th_1,...,\th_m,\bar\th_m,...,\bar\th_1)$.
Define a total order on $B$ by
\equan{order-B}{f_{\Theta}>f_{\Theta'}\ \ \Longleftrightarrow\ \
|\Theta|>|\Theta'|\mbox{ \ or \ }|\Theta|=|\Theta'| \mbox{ but
}\Theta>\Theta',}
where $|\Theta|=\th+\sum_{i=1}^m(\th_i+\bar\th_i)$
is the {\it level} of $\Theta$, and the order on
$\Z_+\times\{0,1\}^{2m}$ is defined lexicographically.
Recall that a nonzero vector $v\in V_\l$ can be uniquely written as
\equa{write-v}{v= b_1v_1+\cdots+b_tv_t,\ \ b_i\in B,\
b_1>b_2>\cdots, \ 0\ne v_i\in L_\l^{(0)}.} We call $b_1v_1$ the {\it
leading term} (cf.~\cite[\S5]{SHK}). A term $b_iv_i$ is called a
{\it prime term} if $v_i\in\C v_\l$. Note that a vector $v$ may have
zero or more than one prime terms. One immediately has
\begin{lemma}
Let $v=gu$ for some $u\in V_\l$ and $g\in U(\gl^-)$. \begin{itemize}\item[{\rm(i)}] If $u$
has no prime term then $v$ has no prime term. \item[{\rm(ii)}] Let
$v'=gu',\,u'\in V_\l$. If $u,u'$ have the same prime terms then
$v,v' $ have the same prime terms.
\end{itemize}
\end{lemma}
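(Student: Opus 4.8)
The plan is to realise the no-prime-term vectors as a $\gl^-$-submodule $N$ of $V_\l$ and to deduce both parts from the stability of $N$. Throughout I take $u$ (and $u'$) to be weight vectors and $g\in U(\gl^-)$ to be weight-homogeneous, as is the case in every application, so that $v=gu$ (and $v'$) is again a weight vector. Write $L_\l^{(0)}=\C v_\l\oplus\mathfrak{m}$ with $\mathfrak{m}=\bigoplus_{\mu\neq\l}(L_\l^{(0)})_\mu$ the sum of the non-highest weight spaces, and put $N=U(\gl_{-1}\oplus\gl_{-2})\otimes\mathfrak{m}\subseteq V_\l$. First I would record the characterisation of prime terms in terms of $N$: for a weight vector $v=\sum_b b\otimes v_b$ with $b=f_\Theta\in B$, the coefficient $v_b$ is forced to be a $\gl_0$-weight vector whose weight is pinned down by the weight of $v$ and by $\Theta$; hence $v_b\in\C v_\l$ precisely when that weight equals $\l$, and $v_b\in\mathfrak{m}$ otherwise. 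Consequently $v$ has no prime term if and only if every coefficient lies in $\mathfrak{m}$, that is, if and only if $v\in N$.

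The crux is to prove that $N$ is stable under $\gl^-=\gl_0^-\oplus\gl_{-1}\oplus\gl_{-2}$, from which $U(\gl^-)N\subseteq N$ follows. For $x\in\gl_{-1}\oplus\gl_{-2}$ this is clear, since on $V_\l=U(\gl_{-1}\oplus\gl_{-2})\otimes L_\l^{(0)}$ such an $x$ acts by left multiplication on the first factor and does not touch the $L_\l^{(0)}$-factor, so $xN\subseteq N$. For $x\in\gl_0^-$ I would use the identity $xb=\mathrm{ad}(x)(b)+bx$ in $U(\gl)$ and the fact that $\gl_0$ normalises $\gl_{-1}\oplus\gl_{-2}$ to obtain $x\cdot(b\otimes w)=\mathrm{ad}(x)(b)\otimes w+b\otimes(x\cdot w)$ with $\mathrm{ad}(x)(b)\in U(\gl_{-1}\oplus\gl_{-2})$. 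As $x$ is a lowering operator, for $w\in\mathfrak{m}$ the vector $x\cdot w$ sits in a weight space strictly below that of $w$, hence strictly below $\l$, so $x\cdot\mathfrak{m}\subseteq\mathfrak{m}$; both terms then lie in $N$ and $x\cdot N\subseteq N$.

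Granting the submodule $N$, both assertions are formal. For (i), if $u$ has no prime term then $u\in N$, so $v=gu\in U(\gl^-)N\subseteq N$ and $v$ has no prime term. For (ii), if $u$ and $u'$ have the same prime terms then the prime coefficients of $u-u'$ cancel and the remaining coefficients lie in $\mathfrak{m}$, so $u-u'\in N$; hence $v-v'=g(u-u')\in N$, which says $v$ and $v'$ have the same prime terms. I expect the only real work to be the $\gl_0^-$-stability of $N$ in the second paragraph --- verifying that $\mathrm{ad}(x)(b)$ stays inside $U(\gl_{-1}\oplus\gl_{-2})$ and, above all, that a lowering operator can never carry a non-highest weight of $L_\l^{(0)}$ back up to $\l$; the action of $\gl_{-1}\oplus\gl_{-2}$ is immediate because it ignores the $L_\l^{(0)}$-factor. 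The one point to flag explicitly is the reduction to weight-homogeneous data, which is exactly what makes ``no prime term'' coincide with membership in $N$.
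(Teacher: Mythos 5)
Your proof is correct. The paper in fact states this lemma without proof (``One immediately has''), and your submodule argument --- that $N=U(\gl_{-1}\oplus\gl_{-2})\otimes\mathfrak{m}$ is stable under $U(\gl^-)$ because $\gl_{-1}\oplus\gl_{-2}$ acts by left multiplication on the PBW factor while $\gl_0^-$ strictly lowers $\gl_0$-weights in $L^{(0)}_\l$ and so can never return a coefficient to the line $\C v_\l$ --- is exactly the observation the authors treat as immediate; your restriction to weight vectors is also the right reading of the statement, since it is what every application in the paper involves, and without it the identification of ``no prime term'' with membership in $N$ would break down for coefficients of the form $v_\l+m$ with $0\ne m\in\mathfrak{m}$.
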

\begin{lemma}\label{gl0-highest}
\begin{itemize}
\item[{\rm(i)}] \label{gl0-highest-1}
Let $v_\mu\in V_\l$ be
a $\gl_0$-highest weight vector of weight $\mu$. Then
\equa{g0-highest}{\l-\mu=2s\th\d+\sum\limits_{i=1}^n(\th_i(\d-\es_i)+\bar\th_i(\d+\es_i)),}
for some $\th\in\Z_+,\,\th_i,\bar\th_i\in\{0,1\}.$ Furthermore the
leading term $b_1v_1$ of $v_\mu$ must be a prime term.
\item[{\rm(ii)}] \label{gl0-highest-2}Suppose $v'_\mu=\sum^{t'}_{i=1}b'_iv'_i$ is
another $\gl_0$-highest weight vector with weight $\mu$. If all
prime terms of $v_\mu$ are the same as those of $v'_\mu$, then
$v_\mu=v'_\mu$.\end{itemize}\end{lemma}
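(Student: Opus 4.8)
The plan is to reduce the whole lemma to the single assertion that \emph{the leading term $b_1v_1$ of any nonzero $\gl_0$-highest weight vector $v_\mu$ of $V_\l$ is a prime term}, i.e.\ $v_1\in\C v_\l$. Granting this, statement (i) follows at once: writing the leading monomial as $\Theta_1=(\th,\th_1,\dots,\th_m,\bar\th_m,\dots,\bar\th_1)$, the factor $v_1\in\C v_\l$ contributes $\gl_0$-weight $\l$, while $f_{\Theta_1}$ lowers the weight by $2s\th\d+\sum_i(\th_i(\d-\es_i)+\bar\th_i(\d+\es_i))$; since $f_{\Theta_1}v_1$ has weight $\mu$, subtracting gives exactly \refequa{g0-highest}, the ranges $\th\in\Z_+$ and $\th_i,\bar\th_i\in\{0,1\}$ being built into the basis $B$.

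For part (ii) I would set $w=v_\mu-v'_\mu$. Since $e_\a v_\mu=e_\a v'_\mu=0$ for every simple root $\a$ of $\gl_0$, the vector $w$ is either $0$ or again a $\gl_0$-highest weight vector of weight $\mu$. The hypothesis that $v_\mu$ and $v'_\mu$ have the same prime terms means precisely that their coefficients of each $f_\Theta v_\l$ agree, so all prime terms cancel in $w$ and $w$ has no prime term. If $w\ne0$, the assertion applied to $w$ would force its leading term to be prime, a contradiction; hence $w=0$, i.e.\ $v_\mu=v'_\mu$. (The preceding lemma on prime terms under left multiplication is available should one prefer to argue through generators.)

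The substance is the assertion itself, which I would prove by applying raising operators. Each $v_i$ is automatically a $\gl_0$-weight vector, since $f_{\Theta_i}$ has a definite $\gl_0$-weight and $f_{\Theta_i}v_i$ has weight $\mu$; and because the $\l$-weight space of the irreducible module $L_\l^{(0)}$ is the line $\C v_\l$, the component $v_1$ fails to be prime only if $e_\a v_1\ne0$ for some simple root $\a$ of $\gl_0$. Using $e_\a(f_\Theta v)=(\ad{e_\a}f_\Theta)v+f_\Theta(e_\a v)$, the essential combinatorial input is that $\ad{e_\a}$ preserves the level $|\Theta|$ and that the order on $B$ is arranged so that, among the top-level monomials occurring in $v_\mu$, the leading one $\Theta_1$ never appears in $\ad{e_\a}f_{\Theta_i}$ for any other $i$. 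Granting this, reading off the coefficient of $f_{\Theta_1}(\cdot)$ in the identity $e_\a v_\mu=0$ isolates the term $f_{\Theta_1}(e_\a v_1)$ and forces $e_\a v_1=0$; as this holds for all simple $\a$, the vector $v_1$ is $\gl_0$-highest and hence $v_1\in\C v_\l$, the desired contradiction.

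The hard part will be verifying this combinatorial input, i.e.\ that the raising operators move the PBW monomials in the single direction of the order on $B$ so that the leading top-level monomial is unreachable. For the roots $\a=\es_i-\es_{i+1}$ (and $\a=\es_{m-1}+\es_m$ when $k=2m$) this is a direct check: $\ad{e_\a}$ merely relabels the odd factors $f_{\d\pm\es_j}$ among themselves while annihilating $f_{2s\d}$, so it does not touch the distinguished coordinate $\th$. The genuinely delicate case, and the main obstacle, is $k=2m+1$ with the short root $\a=\es_m$: here $f_{2s\d}=f_\d$, and $\ad{e_{\es_m}}$ links the chain $f_{\d+\es_m}\to f_\d\to f_{\d-\es_m}$, thereby changing the coordinate $\th$ in both directions; moreover, because $f_\d^2\propto f_{2\d}$, the level is genuinely well defined only on the basis $B$ and not on $U(\gl_{-1}\oplus\gl_{-2})$ itself. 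I expect to dispose of this by treating $\a=\es_m$ separately — having already arranged via the other simple roots that $v_1$ is annihilated by every $e_\a$ with $\a\ne\es_m$ — through a finer analysis that follows this three-term chain explicitly and still exhibits a monomial in $e_{\es_m}v_\mu$ receiving a contribution only from $f_{\Theta_1}(e_{\es_m}v_1)$, whence $e_{\es_m}v_1=0$ and $v_1\in\C v_\l$ as required.
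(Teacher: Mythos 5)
Your reduction is exactly the paper's: part (i) is derived from the primality of the leading coefficient by the same weight bookkeeping, and part (ii) by applying (i) to $v_\mu-v'_\mu$, whose prime terms cancel. But the core assertion is left unproven: the step you call ``the essential combinatorial input'' and then ``the hard part'' --- that $f_{\Theta_1}$ is never produced by $({\rm ad}\,e_\a)f_{\Theta_i}$ for $i\ne1$, so that the coefficient of $f_{\Theta_1}$ in $e_\a v_\mu=0$ is exactly $e_\a v_1$ --- is the entire content of the lemma, and you only announce that you ``expect to dispose of'' the delicate case. (To be fair, the paper compresses the same point into a single sentence beginning ``Obviously'', so you have reproduced its strategy up to, but not through, its only nontrivial step.)

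Moreover, your formulation of the input is not merely unverified; as stated it is sensitive to how the lexicographic order on $\Theta=(\th,\th_1,\dots,\th_m,\bar\th_m,\dots,\bar\th_1)$ is oriented, a point you never fix, and your split into ``easy'' and ``hard'' cases is not quite right. Even for $\a=\es_a-\es_{a+1}$ the relabelling is not inert: $[e_\a,f_{\d-\es_{a+1}}]=f_{\d-\es_a}$, so $({\rm ad}\,e_\a)$ carries the same-level monomial $\Theta_i$ obtained from $\Theta_1$ by trading $f_{\d-\es_a}$ for $f_{\d-\es_{a+1}}$ onto $\Theta_1$ itself, with the coefficient weight off by exactly $\a$; whether this contaminates the coefficient of $f_{\Theta_1}$ depends on whether $\Theta_i<\Theta_1$, i.e.\ on the reading direction of the lex order. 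And for the case you flag, neither literal reading works: already in $\mathfrak{osp}_{3|2}$, solving $e_{\es_1}v_\mu=0$ at weight $\mu=\l-2\d-\es_1$ (with $\dim L^{(0)}_\l\ge3$) produces a $\gl_0$-singular vector supported on all four monomials $f_\d^2,\ f_\d f_{\d-\es_1},\ f_{\d-\es_1}f_{\d+\es_1},\ f_\d f_{\d+\es_1}$, whose unique prime term sits on $f_\d f_{\d+\es_1}$, while under left-to-right (resp.\ right-to-left) lexicographic reading the largest monomial is $f_\d^2$ (resp.\ $f_{\d-\es_1}f_{\d+\es_1}$), whose coefficient is contaminated precisely through the chain $f_{\d+\es_m}\to f_\d\to f_{\d-\es_m}$ you identified. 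A completion must therefore do what you postponed: choose the order so that every $({\rm ad}\,e)$, $e\in\gl_0^+$, moves basis monomials strictly downward --- e.g.\ lexicographic with significance $\bar\th_1\succ\cdots\succ\bar\th_m\succ\th\succ\th_m\succ\cdots\succ\th_1$, under which all elementary transitions, including $f_{\d+\es_m}\to f_\d$ and $f_\d\to f_{\d-\es_m}$, strictly decrease the order while preserving the level --- after which your coefficient-extraction argument does close. As it stands, you have correctly located the obstacle but left the lemma unproven at exactly that obstacle.
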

\begin{proof} (i) Let
$v_\mu$ be as in \refequa{write-v}. If $v_1\notin\C v_\l$, then
there exists $e\in \gl_0^+,\,ev_1\ne0.$ We have
\equan{e-v-mu}{
ev_\mu= b_1(ev_1)+[e,b_1]v_1+b_2(ev_2)+[e,b_2]v_2+\cdots.}
Obviously
when writing $[e,b_i]$ in terms of linear combination of $B$, we can
see that $b_1(ev_1)$ is the leading term of $ev_\mu$, i.e.,
$ev_\mu\ne0$, contradicting that $v_\mu$ is a $\gl_0$-highest weight
vector. So, $v_1\in\C v_\l$ and $\l-\mu$ is the weight of $b_1$,
i.e., we have \refequa{g0-highest}.
\par
(ii) Let $v= v_\mu- v'_\mu$. If $v\ne0$ (then it must be a
$\gl_0$-highest weight vector), since its prime terms are all
cancelled, $v$ has no prime term, therefore by (i), it is not
$\gl_0$-highest weight vector, a contradiction.
\end{proof}

Recall from Notation \ref{notation-PV} that $P(V_\l)$ is the set of
primitive weights of $V_\l$. Let $M_\lambda$ be any high weight
$\gl$-module with highest weight $\l$, we also define
\equa{P-0-V-l}{\mbox{$P_0(M_\l)=\{\mu\in P^{0+}_{\bar\l}\,|\,\mu$ is a
$\fg_0$-highest weight in $M_\l\}$.}}
Since $\l$ and any weight in
$P(V_\l)$ have the same central character, they have the same
atypicality type by \refequa{central-atypical}. Thus
$P(V_\l)\subset P_0(V_\l)$.

Let
\begin{eqnarray} \label{a-l-m}\label{b-l-m}
\begin{aligned}
a_{\l,\mu}=[V_\l,L_\mu]&=\mbox{the multiplicity of $L_\mu$ in
$V_\l$},\\
b_{\l,\mu}=[V_\l,L^{(0)}_\mu]&=\mbox{the multiplicity of
$\fg_0$-module $L^{(0)}_\mu$ in $V_\l$}.
\end{aligned}
\end{eqnarray}
In defining $b_{\l,\mu}$ we restrict $V_\l$ to a $\fg_0$-module.
Also, in principle the multiplicity $[V_\l,L_\mu]$ of $L_\mu$ in
$V_\l$ needs to be defined as in \cite[\S 5]{RcW}. However it turns
out that $V_\l$ has composition series of finite length, thus the
complications dealt with in \cite[\S 5]{RcW} do not arise in our
setting. Clearly $ a_{\l,\mu}\le b_{\l,\mu}\mbox{\ for \ }\mu\in
P(V_\l). $

We define a partial order on $\fh^*$ by: $\l>\mu$ if and only if
$\l-\mu$ is a sum of positive roots for $\l,\mu\in\fh^*$.

\begin{lemma}\label{lemma-possible-primitive}
Let $\l\in P^{0+}_{\bar\l}$ with $\l_m\ge0$.
\begin{enumerate}\item\label{P-0-subset}
If $k\ne2m$ or $0\in S(\bar\l)$, then
$P_0(V_\l)\subset\{\l,\,\l\pre,\,
\l^\si,\,(\l^\si)\pre,\,(\l^\si)\nex\} $ and
$b_{\l,\l\pre}\le1.$ 
Furthermore, if $\l$ has a tail atypical root, then
$P_0(V_\l)\subset\{\l,\l\pre\}$. \item If $k=2m$ and $0\notin
S(\bar\l)$, then
\begin{enumerate}\item[\rm (i)]
$P_0(V_{\l^{(0)}})\subset\{\l^{(0)},\l^{(-1)}_\pm\}$ and
$b_{\l^{(0)},\l^{(-1)}_\pm}\le1;$
\item[\rm (ii)] $P_0(V_{\l^{(-i)}})\subset\{\l^{(-i)},\l^{(-i-1)}\}$ and
$b_{\l^{(-i)},\l^{(-i-1)}}\le1$ for $i\ge1;$
\item[\rm (iii)]
$P_0(V_{\l_+^{(1)}})\subset\{\l_+^{(1)},\l^{(0)},\l_+^{(-1)}\}$ and
$b_{\l_+^{(1)},\l^{(0)}}\le1;$
\item[\rm(iv)]
$P_0(V_{\l^{(i)}_+})\subset\{\l^{(i)}_+,\l^{(i-1)}_+,\l^{(1-i)}_+,\l^{(-i)}_+,\l^{(-i-1)}_+\}$
and $b_{\l_+^{(i)},\l_+^{(i-1)}}\le1$ for $i>1.$
\end{enumerate}\end{enumerate}
\end{lemma}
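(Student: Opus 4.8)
The plan is to convert the statement into a finite combinatorial problem in the $\rho$-translated coordinates and then enumerate. Let $\mu\in P_0(V_\l)$. By definition $\mu\in P^{0+}_{\bar\l}$, so Lemma \ref{l-0-unique}(1) already tells us $\mu=\l^{(i)}_\pm$ for some $i\in\Z$; the real task is to exclude all but the listed values of $i$. Since $\mu$ is a $\fg_0$-highest weight of $V_\l$, Lemma \ref{gl0-highest}(i) forces $\l-\mu$ to have the shape \refequa{g0-highest}. Reading off coordinates gives the crucial \emph{unit-shift bound}
$$\tilde\mu_i-\tilde\l_i=\th_i-\bar\th_i\in\{-1,0,1\}\quad(i>0),\qquad \tilde\mu_0=\tilde\l_0-\big(2s\th+\sum_{i}(\th_i+\bar\th_i)\big)\le\tilde\l_0,$$
so that $\mu\le\l$ holds automatically and every spatial coordinate of $\mu$ lies within $1$ of the corresponding coordinate of $\l$.

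Next I would exploit that $\mu$ and $\l$ share the atypicality type $\bar\l$, that is $\{\,|\tilde\mu_i|:i\neq\ell'\,\}=S(\bar\l)=\{\,|\tilde\l_i|:i\neq\ell\,\}$ as multisets, where $\ell,\ell'$ are the respective atypical indices. Because the weights $\l^{(i)}_\pm$ are $\fg_0$-regular, the numbers in $S(\bar\l)$ are distinct; I would show that, since each coordinate may move by at most one, the multiset can be restored only by a \emph{staircase}, in which the atypical value slides through a maximal run of consecutive $S(\bar\l)$-values while each member of the run shifts by exactly $\pm1$. A single such slide is precisely what the operators $\nex,\pre$ perform (this is how $a_\pm$ enters Definition \ref{defi-aty-type-set}), so the spatial part of $\mu$ can only coincide with that of $\l$, of $\l\nex$, or of $\l\pre$; a double slide such as $\l\pre\pre$ necessarily displaces some coordinate by $2$ and is ruled out by the unit-shift bound. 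For each admissible spatial part the allowed values of $\tilde\mu_0$ are the numbers $\pm|\tilde\l_0|$-type not exceeding $\tilde\l_0$, which together with the $\si$-flip \refequa{l=si} and the identities \refequa{si-nex} produce exactly $\{\l,\l\pre,\l^\si,(\l^\si)\pre,(\l^\si)\nex\}$. When $\l$ carries a tail atypical root one has $\tilde\l_0\le0$, and a short comparison of $0$-th coordinates then shows each of the three $\si$-images fails $\mu\le\l$, leaving only $\{\l,\l\pre\}$ and giving the stated refinement.

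For part (2), where $k=2m$ and $0\notin S(\bar\l)$, the base weight $\l^{(0)}$ satisfies $\tilde\l^{(0)}_0=\tilde\l^{(0)}_m=0$, so both $\g_\pm=\d\pm\es_m$ are atypical (Remark \ref{l-0=0=l-m=0}) and the chain splits into the two branches $\l^{(i)}_\pm$ (Definition \ref{defi-aty-type-set}(2)). I would rerun the staircase analysis, now tracking the sign of the $m$-th coordinate---which is precisely what separates the two branches---and recording which of $\g_\pm$ is crossed at each step. Two degeneracies then trim the generic five-element list: at the branch point $\si$ acts trivially on the $0$-th coordinate (since $\tilde\l^{(0)}_0=0$), collapsing the $\si$-images, while the weights $\l^{(-i)}$ $(i\ge1)$ carry tail atypical roots, so the tail refinement of part (1) applies. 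Sorting the admissible staircase neighbours according to the position of $\mu$ relative to $\l^{(0)}$ yields the four sub-cases (i)--(iv), the extra reflected neighbour in (iv) appearing because, away from the branch point, the $\si$-images land on the $+$ branch with shifted index.

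Finally, the multiplicity bounds $b_{\l,\mu}\le1$ I would obtain from Lemma \ref{gl0-highest}(ii): a $\fg_0$-highest weight vector of weight $\mu$ is determined by its prime terms, so $b_{\l,\mu}$ is at most the number of monomials $f_\Theta$ in the basis $B$ of weight $\l-\mu$. For each $\mu$ carrying a claimed bound (for instance $\mu=\l\pre$) this weight admits a \emph{unique} expression as a non-negative combination of $\{2s\d,\,\d\pm\es_i\}$, which forces $b_{\l,\mu}\le1$. The main obstacle is the rigour of the second and third paragraphs: one must prove cleanly that the atypicality-type constraint together with the unit-shift bound admits no chain element beyond the immediate staircase neighbours, uniformly over the position of the atypical index, the parity of $k$, and the branched case $0\notin S(\bar\l)$.
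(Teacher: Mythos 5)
Your proposal is correct and takes essentially the same route as the paper: the paper's proof likewise combines Lemma \ref{gl0-highest} (yielding the unit-shift constraint (\ref{g0-highest})), the shared atypicality type, and a position-by-position case analysis on the atypical roots of $\l$ and $\mu$ (its Cases 1--5), whose chains of equalities (\ref{mu-in-P(l)}) and (\ref{L-ell-M-t}) are exactly your ``staircase'', with the multiplicity bounds obtained, as you do, from the uniqueness of the solution $\Theta$ together with Lemma \ref{gl0-highest}(ii). The differences are cosmetic: you phrase the exclusion of double slides and the tail refinement abstractly where the paper writes out the coordinate equations, and, like the paper, you treat part (2) as a rerun of the same analysis.
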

\begin{proof} We shall prove (1) only, as the proof for (2) is similar.
Let $\mu\in P_0(V_\l)$. Then $\mu\le\l$. Denote by $\g_\l$ and $\g_\mu$ the
atypical roots of $\l$ and $\mu$ respectively.

\vskip5pt
 {\it Case 1:
Suppose $\g_\l=\d+\es_\ell,\,\g_\mu=\d+\es_t$ and $\ell\le t$.}

Then we can write
\equan{l-mu}{\begin{array}{cccccccccccccccccccc}
\tilde\l&\!\!\!=\!\!\!&(\L_0\UPL{10}{70}\,|\,\L_1,...,&\!\!\!\!\L_{\ell-1},
&\!\!\!\!\L_\ell\UPR{10}{70},&\!\!\!\!\L_{\ell+1},&\!\!\!\!...,
&\!\!\!\!\L_{t-1},&\!\!\!\!\L_t,&\!\!\!\!\L_{t+1},&\!\!\!\!...,\L_m),\\[12pt]
\tilde\mu&\!\!\!=\!\!\!&(M_0\UPL{10}{130}\,|\,\L_1,...,&\!\!\!\!\L_{\ell-1},
&\!\!\!\!\L_{\ell+1},&\!\!\!\!\L_{\ell+2},&\!\!\!\!...,
&\!\!\!\!\L_{t},&\!\!\!\!M_t\UPR{10}{130},&\!\!\!\!\L_{t+1},&\!\!\!\!...,\L_m),
\end{array}}
where $\L_0=\L_\ell,\,M_0=M_t$. By \refequa{g0-highest},
there exists $\Theta\in\Z_+\times\{0,1\}^{2m}$
such that
\equa{mu-in-P(l)}{\begin{array}{llllll} \L_0=M_0+2s
\th+\sum\limits_{i=1}^m(\th_i+\bar\th_i),&
\th_i=\bar\th_i\ (i<\ell\mbox{ or }i>t),\\[12pt]
\L_i=\L_{i+1}-\th_i+\bar\th_i\ (\ell\le i<t), &
\L_t=M_t-\th_t+\bar\th_t.
\end{array} }
Note that $\L_i\ge\L_{i+1}+1$ for all $i$ and $\L_t\ge M_t+1\ge
\L_{t+1}+2$ (the last inequality occurs only when $t<m$) , and
$\th_i,\bar\th_i\in\{0,1\}$. Thus $\th_i=0,\,\bar\th_i=1$ for
$\ell\le i\le t$ and $\th=\th_j=\bar\th_j=0$ for $j<\ell$ or $j>t$.
So
\equa{L-ell-M-t}{\L_\ell=\L_{\ell+1}+1=\L_{\ell+2}+2=...=\L_t+t-\ell=M_t+t+1-\ell.}
(If $\ell=t$, there is no last equality and $\bar\th_\ell$ can be
$0$ or $1$). Thus $\mu=\l$ or $\l\pre$. Since in each case, the
solution $\Theta$ is unique, by Lemma \ref{gl0-highest}(2),
$a_{\l,\mu}\le1$. \vskip5pt

 {\it Case 2:
Suppose $\g_\l=\d+\es_\ell,\,\g_\mu=\d+\es_t$ and $\ell>t$.}

Then $M_0=M_t>\L_{t+1}\ge\L_\ell=\L_0$, which contradicts the first
equation in \refequa{mu-in-P(l)}. Thus $\ell>t$ cannot
occur.\vskip5pt

{\it Case 3: Suppose $\g_\l=\d+\es_\ell,\,\g_\mu=\d-\es_t$ and
$\ell\le t$.}

Then $M_0=-M_t$. We still have \refequa{L-ell-M-t}, thus $\mu$ has
to be $\l^\si$ or $(\l^\si)\nex$ (in this case, the solution of
$\Theta$ might not be unique). \vskip5pt

{\it Case 4: Suppose $\g_\l=\d+\es_\ell,\,\g_\mu=\d-\es_t$ and
$\ell>t$.}

Then again $M_0=-M_t$. We re-write $\tilde\l,\tilde\mu$ as
\equan{l-mu-rewrite}{\begin{array}{cccccccccccccccccccc}
\tilde\l&\!\!\!=\!\!\!&(\L_0\UPL{10}{120}\,|\,\L_1,...,&\!\!\!\!\L_{t-1},
&\!\!\!\!\L_t,&\!\!\!\!\L_{t+1},&\!\!\!\!...,
&\!\!\!\!\L_{\ell-1},&\!\!\!\!\L_\ell\UPR{10}{120},&\!\!\!\!\L_{\ell+1},
&\!\!\!\!...,\L_m),\\[9pt]
\tilde\mu&\!\!\!=\!\!\!&(M_0\DNL{8}{70}\,|\,\L_1,...,&\!\!\!\!\L_{t-1},
&\!\!\!\!M_t\DNR{8}{70},&\!\!\!\!\L_t,&\!\!\!\!...,
&\!\!\!\!\L_{\ell-2},&\!\!\!\!\L_{\ell-1},&\!\!\!\!\L_{\ell+1},&\!\!\!\!...,\L_m).
\end{array}}
Thus
\equan{L-ell-M-t-re} { M_t=\L_t+1=\L_{t+2}+2=...=\L_\ell+\ell-t+1.}
In this case, $\mu$ has to be $(\l^\si)\pre$ (the solution of
$\Theta$ might not be unique).

\vskip5pt

{\it Case 5: Suppose $\g_\l=\d-\es_\ell,\,\g_\mu=\d\pm\es_t$.}

Then $M_0\le\L_0\le0$. As in Case 2, we have $t\le\ell$. Similar to Case 1,
we obtain $\mu=\l$ or $\l\pre$. The atypical root of $\mu$ is
$\g_\mu=\d-\es_t$ in both situations, and the solution for $\Theta$ is unique. Thus
$a_{\l,\mu}\le1$. This prove the lemma.
\end{proof}

For $\l,\mu\in\fh^*$ such that $\l-\mu$ is a sum of distinct
positive odd roots, i.e., $\l-\mu=\sum_{\b\in\Si}\b$ for some
$\Si\in\D_{\bar1}^+$, we introduce the {\it relative level}
$|\l-\mu|$ to be the cardinality $|\Si|$, which is also equal to
$\l_0-\mu_0$. For any integral (not necessarily $\fg_0$-dominant)
weight $\l$ with the atypical root $\g_\l$ (cf.~Remark
\ref{l-0=0=l-m=0}), we introduce the Bernstein-Leites formula \cite{BL}
\equa{B-L-formula}{\chi^{\rm
BL}_\l={\dis\frac1{R_{\bar0}}}\sum\limits_{w\in
W}\sign(w)\Big(e^{\l+\rho_{\bar0}}
\prod\limits_{\b\in\D_1^+\bs\{\g_\l\}}(1+e^{-\b})\Big).
}
For convenience, we introduce the following  notation:
\equa{chi-def} {
\begin{aligned}   &\text{$\chi^{\rm BL}_{0,\l}$  denotes  the right-hand side of
(\ref{B-L-formula}) with $W$ replaced by $W_0$},\\
&\text{$\chi^V_\l$ denotes the right-hand side of (\ref{typical-char}) for
$\l\in P$}.
\end{aligned}
}
Then by recalling the definition of regular weights
(immediately before (\ref{g0-dominant})),
one easily sees that \equa{ch-v-l-not=0}{\mbox{ $\chi^V_\l\ne0$ \ \
$\Lra$ \ \  $\l$ is regular.}}

\begin{lemma} \label{lemma-tail-primitive}
Assume that $\l\in P^+_{\bar\l}$ with $\l_m\ge0$ has a tail atypical root
$\g_\l=\d-\es_\ell$.
\begin{enumerate}\item
If $\l\ne\l^{(0)}$ or $0\in S(\bar\l)$, then
$P(V_\l)=\{\l,\l\pre\}$, and the primitive weight graph of $V_\l$ is
$\l\rrar\l\pre$. Furthermore,
\[
\ch L_\l=\chi^{\rm BL}_{0,\l}, \quad \ch
L_{\l^{(0)}}=\frac12\chi^{\rm BL}_{\l^{(0)}}.
\]
\item
 If $\l=\l^{(0)}$ and $0\notin S(\bar\l)$, then
 $P(V_{\l^{(0)}})=\{\l^{(0)},\l^{(-1)}_\pm\}$ with the primitive weight
 graph
\equa{tail-graph-0} {\mbox{ }\ \ \ \l^{(0)}\
\put(0,5){$\nearrow\put(0,5){\footnotesize$\l^{(-1)}_+$}$}
\put(0,-5){$\searrow\put(0,-5){\footnotesize$\l^{(-1)}_-$}$}
\hspace*{9ex}.} Furthermore,
\[
\ch L_{\l^{(0)}}=\chi^{\rm BL}_{\l^{(0)}}.
\]
\end{enumerate}
\end{lemma}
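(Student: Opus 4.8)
The plan is to prove both parts by combining the constraint on primitive weights coming from Lemma \ref{lemma-possible-primitive} with a character-counting argument. Since $\g_\l=\d-\es_\ell$ is a tail atypical root, part (1) of Lemma \ref{lemma-possible-primitive} tells us directly that when $\l\ne\l^{(0)}$ or $0\in S(\bar\l)$ we have $P_0(V_\l)\subset\{\l,\l\pre\}$, and hence $P(V_\l)\subset\{\l,\l\pre\}$ because $P(V_\l)\subset P_0(V_\l)$. So the first task is only to show that $\l\pre$ genuinely occurs as a primitive weight and that the graph is the single edge $\l\rrar\l\pre$ rather than two disconnected vertices. For part (2), when $\l=\l^{(0)}$ and $0\notin S(\bar\l)$, case (2)(i) of that lemma gives $P_0(V_{\l^{(0)}})\subset\{\l^{(0)},\l^{(-1)}_\pm\}$, and I must show both $\l^{(-1)}_+$ and $\l^{(-1)}_-$ appear and that each is directly derived from $\l^{(0)}$.

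**Producing the primitive vectors and the edges.**
First I would establish that $\l\pre$ is primitive. Since $b_{\l,\l\pre}\le1$, the $\gl_0$-module $L^{(0)}_{\l\pre}$ occurs at most once in $V_\l$; I would exhibit a nonzero $\gl_0$-highest weight vector of weight $\l\pre$ inside $V_\l$ and check it is killed by $\gl_{+1}$ modulo the unique maximal submodule, so that it is primitive. The cleanest route is to use the explicit description of $\l\pre=(\l-a_-\g)^+$ together with the monomial basis $B$: the weight $\l-\l\pre$ is a sum of distinct positive odd roots (as in the displayed relations of the Case~1 and Case~5 analysis), so the candidate vector is built from the product $f_{\d-\es_\ell}\cdots$ acting on $v_\l$, followed by a $W_0$-adjustment. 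That $\l\pre<\l$ strictly forces the edge to point $\l\rrar\l\pre$, and since there is no intermediate weight in $P(V_\l)$ the derivation is direct. For part (2), Remark \ref{rem-l-m} and the outer automorphism $\tau$ (when $k=2m$) relate the two branches $\l^{(-1)}_\pm$ symmetrically, so exhibiting one primitive vector yields the other; the two edges then give exactly the fork drawn in \refequa{tail-graph-0}.

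**The character formulae.**
With the graph determined, the character identities follow from a linkage/BGG-type computation. In part (1) the module $V_\l$ has exactly two composition factors $L_\l$ and $L_{\l\pre}$, so $\ch V_\l=\ch L_\l+\ch L_{\l\pre}$. Using Lemma \ref{char-Verma} for $\ch V_\l$ and the tail structure to identify $\ch L_{\l\pre}$ recursively (descending in the chain of $\l^{(-i)}$), I would solve for $\ch L_\l$ and recognise the result as $\chi^{\rm BL}_{0,\l}$; the telescoping of the Bernstein--Leites expressions \refequa{B-L-formula} across the two-step chain is what collapses the alternating sum to the claimed closed form. The factor $\tfrac12$ in $\ch L_{\l^{(0)}}=\tfrac12\chi^{\rm BL}_{\l^{(0)}}$ arises because at $\l^{(0)}$ the tail atypical root makes the full Weyl-group sum over $W=W_0\times\Z_2$ double-count, exactly as the $\tfrac12$ appears; here the relation \refequa{si-nex} and the special behaviour at $\tilde\l_0=0$ (Remark \ref{l-0=0=l-m=0}) must be invoked carefully. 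In part (2), where $0\notin S(\bar\l)$ forces $a=0$ and $\tilde\l_0=0$, the two branches contribute symmetrically and the $\Z_2$-factor no longer produces a coincidence, so one obtains the full $\chi^{\rm BL}_{\l^{(0)}}$ without the $\tfrac12$.

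**Main obstacle.**
I expect the hardest step to be the honest verification that $\l\pre$ (resp. $\l^{(-1)}_\pm$) actually occurs with the asserted multiplicity and is primitive, rather than merely possible. Ruling out the degenerate situation where the candidate $\gl_0$-highest weight vector is already $\gl$-highest (which would make the graph disconnected and force $\ch V_\l=\ch L_\l$) requires a genuine nonvanishing argument: one must show $\gl_{+1}$ does not annihilate the top vector outright, equivalently that $V_\l$ is not irreducible. I anticipate this comes down to a direct computation with the root vectors $e_\g,f_\g$ on the leading term, using the prime-term bookkeeping of Lemma \ref{gl0-highest}, and the tail condition $\tilde\l_0=-\tilde\l_\ell$ is precisely what guarantees the relevant structure constant is nonzero.
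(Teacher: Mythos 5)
Your outline follows the same skeleton as the paper's proof: bound the candidate primitive weights by Lemma \ref{lemma-possible-primitive}, show that $\l\pre$ (resp.\ $\l^{(-1)}_\pm$) actually occurs, telescope the characters $\ch V_{\l^{(i)}}=\ch L_{\l^{(i)}}+\ch L_{\l^{(i-1)}}$ down the chain to get $\chi^{\rm BL}_{0,\l}$, and use the $\si$-symmetry of the Bernstein--Leites expression for the factor $\frac12$; your suggestion to transfer one branch of (\ref{tail-graph-0}) to the other via the outer automorphism $\tau$ of Remark \ref{rem-l-m} is legitimate and in fact cleaner than the paper's terse ``similarly''. But the two steps you leave as intentions are exactly where the real work lies, and your sketches of them would not go through as written. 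For the occurrence of $\l\pre$: a vector ``built from the product $f_{\d-\es_\ell}\cdots$ acting on $v_\l$, followed by a $W_0$-adjustment'' does not exist as such --- in general no single monomial of weight $\l\pre$ is $\gl_0$-highest, and there is no operation on vectors in $V_\l$ realising the dot-action conjugation $\mu\mapsto\mu^+$. The paper instead takes the full sum $v_\mu=\sum_{S\subset I}c_Sf_Sv_\l$ of (\ref{v-mu}) with coefficients recursively calibrated in (\ref{define-cS}) so that every relevant simple root vector kills $v_\mu$, and checks $c_S\ne0$ (the tail condition $\tilde\l_0=-\tilde\l_\ell$ enters here through the $h_{\d-\es_1}$-eigenvalue computation, as you anticipated). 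Moreover, when the atypical entries of $\l$ and $\l\pre$ sit at different positions ($t<\ell$), one such sum does not suffice: the construction must be iterated $\ell-t+1$ times through non-$\gl_0$-dominant intermediate weights $\mu^{(i)}$ as in (\ref{v-mu-i}), and the resulting $v_\mu$ is \emph{not} strongly primitive, so its primitivity needs the separate relative-level argument of the paper's third Claim rather than a check that $\gl_{+1}$ kills it modulo the maximal submodule. (Alternatively, occurrence for $i<0$ follows abstractly by the descending induction the paper borrows from the paragraph after (\ref{ch-l-1}), seeded by $V_{\l^{(0)}}\ne L_{\l^{(0)}}$ since $L_{\l^{(0)}}$ is finite dimensional --- a seed your proposal never uses.)

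The second gap is the factor $\frac12$. This is not a formal double-count: $\si$ fixes neither $\l^{(0)}$ (indeed $(\l^{(0)})^\si=\l^{(1)}$) nor the odd-root product in $\chi^{\rm BL}_{0,\l^{(0)}}$, since it sends the omitted root $\d-\es_\ell$ to a negative root. The identity $\chi^{\rm BL}_{\l^{(0)}}=2\chi^{\rm BL}_{0,\l^{(0)}}$ requires the geometric-series expansion (\ref{D-1===}), the vanishing by non-regularity of all but one term of the resulting infinite sum $\sum_{i\ge1}(-1)^{i-1}\chi^V_{\l^{(1)}-i(\d+\es_\ell)}$, and the sign bookkeeping in (\ref{SOSOS}) involving the element $\theta\in W_0$ and the parity of $2a-1$, which plays out differently for $k=2m$ and $k=2m+1$. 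You also misattribute the mechanism: you invoke the $\tilde\l_0=0$ degeneracy of Remark \ref{l-0=0=l-m=0}, but in part (1) one has $\tilde\l_0=-a\ne0$ for $\l^{(0)}$ by (\ref{l-(0)}); the case $\tilde\l_0=0$ is precisely part (2), where the $\frac12$ does \emph{not} appear. So the proposal is right in architecture but has genuine gaps at the two computations that carry the lemma.
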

\begin{proof} (1) Since $\l$ has a tail atypical root,
we have $\l=\l^{(i)}$ for some $i\le0$. As $L_{\l^{(0)}}$ is finite
dimensional, we have $L_{\l^{(0)}}\ne V_{\l^{(0)}}$, i.e., $\mu\in
P(V_{\l^{(0)}})$ for some $\mu\ne\l^{(0)}$. By Lemma
\ref{lemma-possible-primitive}, $\mu=\l\pre=\l^{(-1)}$. Now one can
use the arguments in the paragraph after (\ref{ch-l-1}) to prove
$\l^{(i-1)}\in P(V_{\l^{(i)}})$ for $i<0$. For the purpose of the
next lemma and other purposes, we provides below a way to explicitly
construct the primitive vector $v_\mu$ (cf.~\refequa{v-mu} and
\refequa{v-mu-i}) for $\mu=\l\pre$.

 By Case 5 in the proof of
Lemma \ref{lemma-possible-primitive}, we have
\equan{l-mu-tail}{\begin{array}{cccccccccccccccccccc}
\tilde\l&\!\!\!\!=\!\!\!\!&(\L_0\DNL{8}{150}\,|\,\L_1,...,&\!\!\!\!\L_{t-1},
&\!\!\!\!\L_\ell\!+\!\ell\!-\!t,
&\!\!\!\!\L_\ell\!+\!\ell\!-\!t\!-\!1,&\!\!\!\!...,
&\!\!\!\!\L_{\ell}\!+\!1,&\!\!\!\!\!\!\!\!\!\!\!\!\L_\ell\DNR{8}{120},\!\!\!\!\!\!\!\!
&\!\!\!\!\L_{\ell+1},&\!\!\!\!...,\L_m),\\[15pt]
\tilde\mu&\!\!\!\!=\!\!\!\!&(M_0\DNL{8}{70}\,|\,\L_1,...,&\!\!\!\!\L_{t-1},
&\!\!\!\!\L_\ell\!+\!\ell\!-\!t\!+\!1,
\!\!\!\!\!\!\!\!\!\!{}\DNR{8}{70}\ \ \
&\!\!\!\!\L_\ell\!+\!\ell\!-\!t,&\!\!\!\!...,
&\!\!\!\!\L_{\ell}\!+\!2,&\!\!\!\!\L_{\ell}\!+\!1,&\!\!\!\!\L_{\ell+1},&\!\!\!\!...,\L_m),
\end{array}}
where $t\le\ell$. We define $\mu^{(0)}=\l$ and
$\mu^{(i)}=\mu^{(i-1)}-(\d-\es_{\ell+1-i})$. Then
$\mu=\mu^{(\ell-t+1)}$.\vskip5pt

{\it Case 1: $t=\ell$.}

We want to construct, from the highest weight vector $v_\l$ of $V_\l$, a
$\gl_0$-highest weight vector $v_\mu$ with weight $\mu$. By Lemma
\ref{gl0-highest}(1), $v_\mu$ should contain a leading term which
has to be $c f_{\d-\es_\ell}v_\l$ for some nonzero $c\in\C$. Set
$I=\{1,...,\ell-1\}$, and  for $S=\{i_1<\cdots<i_p\}\subset I$, we
define $f_S$ by
\equa{define-fS}{f_S=\left\{\begin{array}{cl}f_{\d-\es_{i_1}}f_{\es_{i_1}-\es_{i_2}}\cdots
f_{\es_{i_{p-1}}-\es_{i_p}}f_{\es_{i_p}-\es_\ell}&\mbox{if \
}S\ne\emptyset,\\[4pt]
f_{\d-\es_\ell}&\mbox{if \ }S=\emptyset.\end{array}\right.}
In order for $v_\mu$ to have weight $\mu$, we can suppose that
$v_\mu$ has the form
\equa{v-mu}{v_\mu=\sum\limits_{S\subset I}c_S f_S v_\l,}
for some $c_S\in\C$. Now if we define $c_S$ uniquely by
\equa{define-cS}{
c_S=\left\{\begin{array}{cl}1&\mbox{if }S=I,\\[4pt]
-(\l_q\!-\!\l_\ell\!+\!\ell\!-\!q\!-\!1)c_{S'}\!\!\!&\mbox{if
}S\!\subsetneqq\! I,\,S'\!=\!S\!\cup\!\{q\},\,q\!=\!{\rm max}(I\bs
S),\end{array}\right. }
then we can prove
\begin{claim}\label{claim-lemma1}
 $v_\mu$ is a $\gl_0$-highest weight vector.
\end{claim}

First note that if $t=1$ then \refequa{v-mu}
is simply reduced to $v_\mu=f_{\d-\es_1}v_\l$. In general, note from
$t=\ell$ that $\L_{\ell-1}>\L_\ell+1$, i.e., $\l_{\ell-1}>\l_\ell$,
and that $q\le\ell-1$, we have
$$\l_q-\l_\ell+\ell-q-1\ge\l_{\ell-1}-\l_\ell+\ell-q-1>\ell-q-1\ge0,$$
and so $c_S\ne0$ for all $S\subset I$. In particular, the leading
term of $v_\mu$ is $c_\emptyset f_{\d-\es_\ell}v_\l\ne0$, i.e.,
$v_\mu\ne0$.

To prove the claim, we only need to prove $ev_\mu=0$ for
\equa{e-in}{\mbox{$e\in\{e_{\es_a-\es_{a+1}}\ (1\le a<m),\
e_{\es_{m-1}+\es_m}$ (if $k=2m$), $e_{\es_m}$ (if $k=2m+1$)\,$\}.$}}
Note from Notation \ref{e-f-a} that (in the following we denote
$\es_0=\d$ so that $b$ can be $0$)
\equa{e-a-j,f-a-j}{[e_{\es_a-\es_{a+1}},f_{\es_b-\es_c}]=\left\{\begin{array}{ll}
h_{\es_a-\es_{a+1}}&\mbox{if }b=a,\,\ c=a+1,\\[4pt]
-f_{\es_{a+1}-\es_c}&\mbox{if }b=a,\,\ c>a+1,\\[4pt]
f_{\es_b-\es_a}&\mbox{if }b<a,\,\ c=a+1,\\[4pt]
0&\mbox{otherwise},
\end{array}\right.}
where $b<c$. We see that $e_{\es_{a}-\es_{a+1}}\,(\ell\le a\le m-1),
\,e_{\es_{m-1}+\es_m}$ (if $k=2m$), $e_{\es_m}$ (if $k=2m+1$)
commute with $f_S$ for all $S\subset I$. Thus it suffices to
consider $e=e_{\es_{a}-\es_{a+1}}$ for $1\le a<\ell$.

For any $S_0=\{i_1<\cdots<i_x <j_1<\cdots<j_p\}\subset I$ with
$i_x<a,\ a+1<j_1$ (where $x$ or $p$ can be zero). Let
$S_1=S_0\cup\{a,a+1\},$ $S_2=S_0\cup\{a\},$ $S_3=S_0\cup\{a+1\}$. We
want to prove
\equa{S0-S1-S2-S3}{c_{S_2}=-(\l_a-\l_\ell+\ell-a-1)c_{S_1},\ \
c_{S_3}=-(\l_{a+1}-\l_\ell+\ell-a-2)c_{S_1}.} Set $b={\rm max}(I\bs
S_0)$. If $b=a+1$, then we have \refequa{S0-S1-S2-S3} by definition
\refequa{define-cS}. If $b>a+1$, then \refequa{define-cS} shows
$c_{S_i}=-(\l_b-\l_\ell+\ell-b-1)c_{S'_i}$ for $S'_i=S_i\cup\{b\}$
and $i=1,2,3$. Thus \refequa{S0-S1-S2-S3} can be proved by induction
on ${\rm max}(I\bs S)$. From \refequa{e-a-j,f-a-j} and
\refequa{S0-S1-S2-S3}, we obtain \equa{lemma-e-f}{
[e_{\es_a-\es_{a+1}},\sum\limits_{i=1}^3
c_{S_i}f_{S_i}]v_\l=((\l_a-\l_{a+1}+1)c_{S_1}+c_{S_2}-c_{S_3})\hat
f_{S_1}=0,} where $\hat f_{S_1}$ is defined as $f_{S_1}$ in
\refequa{define-fS} but with the factor $f_{\es_a-\es_{a+1}}$
removed. Note that the family $\{S\,|\,S\subset I\}$ of subsets of
$I$ can be divided into a disjoint union of blocks, each block has
the form $\{S_0,S_1,S_2,S_3\}$ defined as above. Thus
\refequa{lemma-e-f} together with $[e_{\es_a-\es_{a+1}},f_{S_0}]=0$
implies $e_{\es_a-\es_{a+1}}v_\mu=0$, and so the claim is proved.
\begin{claim}
$e_{\d-\es_1}v_\mu=0.$
\end{claim}

For any $S=\{i_1<\cdots<i_p\}\subset I$ with $1<i_1$, let
$S_1=S\cup\{1\}$. Then as in the proof of \refequa{S0-S1-S2-S3}, we
have $c_S=-(\l_1-\l_\ell+\ell-2)c_{S_1}$, and
$$[e_{\d-\es_1},c_Sf_S+c_{S_1}f_{S_1}]=(-(\l_1-\l_\ell+\ell-2)+h_{\d-\es_1})\hat
f_{S_1}v_\l=0,$$ where $\hat f_{S_1}$ is defined as $f_{S_1}$ in
\refequa{define-fS} but with the factor $f_{\d-\es_1}$ removed, and
the last equality follows from the fact that $f_{S_1}v_\l$ is an
eigenvector of $h_{\d-\es_1}$ with eigenvalue
$$\l_0-\l_1-1=\L_0+m-s-\l_1-1=-\L_\ell+m-s-\l_1-1=\l_1-\l_\ell+\ell-2.$$
This proves the claim. Thus the lemma is proved in this
case.\vskip5pt

{\it Case 2: $t<\ell$.}

We inductively construct $v_{\mu^{(i)}}$ for $i=1,...,\ell-t+1$ as
follows: \begin{eqnarray}\label{v-mu-i}
v_{\mu^{(i)}}\!\!\!&=&\!\!\!\mbox{$\sum\limits_{S^{(i)}\subset
I^{(i)}}$}c^{(i)}_{S^{(i)}} f^{(i)}_{S^{(i)}}v_{\mu^{(i-1)}},\mbox{
\
 where}\\
\nonumber
I^{(i)}\!\!\!&=&\!\!\!\biggl\{\begin{array}{lll}\{1,...,\ell-1-i\}&\mbox{if
\ }
i\le\ell-t,\\[4pt]
\{1,...,t-1\}&\mbox{if \ }i=\ell+1-t,\end{array}
\end{eqnarray}and $f_{S^{(i)}}^{(i)},
c_{S^{(i)}}^{(i)}$ are defined as in \refequa{define-fS},
\refequa{define-cS} with $\ell$ replaced by $\ell+1-i$ and $\l$
replaced by $\mu^{(i-1)}$.

First note from the definition of $c_{S^{(i)}}^{(i)}$ in
\refequa{define-cS} that when $i\le \ell-t$, we have
$$\mu^{(i-1)}_q-\mu^{(i-1)}_{\ell+1-i}+(\ell+1-i)-q-1\ge \ell-i-q>0\mbox{ \ since }q\in I^{(i)}, $$
and when $i=\ell-t+1$, we have
$$\mu^{(i-1)}_q-\mu^{(i-1)}_t+t-q-1\ge\mu^{(i-1)}_q-\mu^{(i-1)}_t=\l_q-\l_t>
0.$$ Thus $c_{S^{(i)}}^{(i)}\ne0$ for all $S^{(i)}\subset I^{(i)}$.
Also note that when we write $v_{\mu^{(i)}}$ in terms of
\refequa{write-v}, it produces only one possible leading term, and
the coefficient of the leading term is
$\prod_{i=1}^{\ell-t+1}c^{(i)}_\emptyset\ne0$. In particular
$v_{\mu^{(i)}}\ne0$.

The same arguments in Case 1 show that $e_\a v_{\mu^{(1)}}=0$ for
all simple roots $\a\in\Pi$ except $\a=\es_{\ell-1}-\es_\ell$.
Induction on $i$ shows \begin{eqnarray}\label{e-a-mu-i} e_\a
v_{\mu^{(i)}}\!\!\!&=&\!\!\!0\mbox{ \ \ for all \
}\a\in\Pi\bs\Pi^{(i)},\mbox{ \ where}\\
\nonumber
 \Pi^{(i)}&=&\!\!\!\biggl\{\begin{array}{ll}
\{\es_j-\es_{j+1}\,|\,\ell-i\le
j\le\ell-1\}&\mbox{if \ }i\le \ell-t,\\[4pt]
\{\es_j-\es_{j+1}\,|\,t\le j\le\ell-1\}&\mbox{if \
}i=\ell+1-t.\end{array}\end{eqnarray}
\begin{claim}
$v_\mu=v_{\mu^{(\ell+1-t)}}$ is a primitive weight vector.
\end{claim}

Otherwise there exists some element $x\in
U(\gl^+)=U(\gl_{\bar0}^+)U(\gl_{+1})$ of the weight, say, $\a$ (here
$U(\gl_{+1})$ denotes the skew-symmetry tensor space of $\gl_{+1}$),
such that $v=xv_\mu\ne0$ is primitive. Since there is no other
primitive weight between $\l$ and $\mu=\l\pre$ by Lemma
\ref{lemma-possible-primitive}, it follows that $v$ must have the
weight $\l$, i.e., $\l=\a+\mu$. Observe from the construction of
$v_\mu$ that for all $\b\in\DOP$ with $\b>\d-\es_\ell$, we have
$e_\b v_\mu=0$. Since the relative level of $\l$ and $\mu$ is
$|\l-\mu|=\ell+1-t$, there must be $\ell+1-t$ positive odd roots in
order to produce $\a=\l-\mu=\sum_{i=t}^\ell(\d-\es_i)$, thus for any
term in $x$, there must be a root vector $e_{\d-\es_j}$ occurring as
a factor for some $j\le t$. But $e_{\d-\es_j}v_\mu=0$ by
\refequa{e-a-mu-i}. Thus $v=xv_\mu=0$, a contradiction. This proves
the first statement of Lemma \ref{lemma-tail-primitive}(1).

Thus the primitive weight graph of $V_{\l^{(i)}}$ for $i\le0$ is given by
$\l^{(i)}\rrar\l^{(i-1)}$. Hence, $\ch
V_{\l^{(i)}}=\ch L_{\l^{(i)}}+\ch L_{\l^{(i-1)}}$ and it follows that
\[ \ch L_{\l^{(i)}}=\sum_{j=0}^\infty(-1)^j\ch V_{\l^{(i-k)}}=\chi^{\rm
BL}_{0,\l^{(i)}},
\]
where the last equality can be obtained from
(\ref{l-nex-pre}) and (\ref{ch-v-l-not=0}), or from the same
arguments in \cite{VHKT}. To prove $\ch
L_{\l^{(0)}}=\frac12\chi^{\rm BL}_{\l^{(0)}}$, suppose the atypical
root of $\l^{(0)}$ is $\g=\d-\es_\ell$. Note from (\ref{l=si})  that
$(\l^{(0)})^\si=\l^{(1)}=\l^{(0)}+2a\d$, where $a$ is defined in
(\ref{l-(0)}), so
$\si(\l^{(0)}+\rho_{\bar0})=\si(\l^{(0)}+\rho+\rho_1)=
\l^{(1)}+\rho_{\bar0}-2\rho_1$, and
\begin{eqnarray}\label{D-1===}
\si\Big(\mbox{$\prod\limits_{\b\in\D^+_1\bs\{\d-\es_\ell\}}$}
(1+e^{-\b})\Big)
&=&e^{2\rho_1-\d-\es_\ell}\mbox{$\prod\limits_{\b\in\D^+_1\bs\{\d+\es_\ell\}}$}(1+e^{-\b})
\nonumber\\
&=&D_1\mbox{$\sum\limits_{i=1}^\infty$}(-1)^{i-1}e^{2\rho_1-i(\d+\es_\ell)},
\end{eqnarray}
where $D_1=\prod_{\b\in\D_1}(1+e^{-\b})$  is $W_0$-invariant. Thus
\equa{SOSOS}{\si(\chi^{\rm
BL}_{0,\l^{(0)}})=\sum_{i=1}^\infty(-1)^{i-1}\chi^V_{\l^{(1)}-i(\d+\es_\ell)}$
$=(-1)^{2a-1}\chi^{\rm BL}_{0,\l^{(1)}-2a(\d+\es_\ell)}=-\chi^{\rm
BL}_{\l^{(0)}},} where the last equality is proved as follows.

If $s=1$, i.e., $k=2m$, then $a\in\Z$ and $0\in S(\bar\l)$, so
$(-1)^{2a-1}=-1$, and
$\l^{(0)}=\theta\cdot(\l^{(1)}-2a(\d+\es_\ell))$, where the action
is the dot action defined by (\ref{dot-action}), and $\theta\in W_0$
with $\sign(\theta)=1$ is the unique element changing the signs of
$\ell$-th and $m$-th coordinates. If $s=\frac12$, i.e., $k=2m+1$,
then $2a-1\in2\Z$ and in this case $\theta$ with $\sign(\theta)=-1$
is the unique element changing the sign of $\ell$-th. In any case,
we have (\ref{SOSOS}), which together with $\ch
L_{\l^{(0)}}=\chi^{\rm BL}_{0,\l^{(0)}}$ implies   $\ch
L_{\l^{(0)}}=\frac12\chi^{\rm BL}_{\l^{(0)}}$.

(2) We can prove (\ref{tail-graph-0}) similarly as in part (1) of the proof.
From this we obtain
\[
\ch L_{\l^{(0)}}=\chi^V_{\l^{(0)}}-\chi^{\rm
BL}_{0,\l_+^{(-1)}}-\chi^{\rm BL}_{0,\l_-^{(-1)}}=\chi^{\rm
BL}_{0,\l^{(0)}}-\chi^{\rm BL}_{0,\l_-^{(-1)}}=\chi^{\rm
BL}_{\l^{(0)}}.
\]
This completes the proof.
\end{proof}
\begin{lemma}\label{lemma-two-nontail} Let $\l\in P^+_{\bar\l}$ with $\l_m\ge0$. Assume
that $\mu=\l\pre\in P^+_{\bar\l}$ and both $\l$ and $\mu$ have the same non-tail atypical
root $\g=\d+\es_1$. Then $\mu\in P(V_\l)$ and
$a_{\l,\mu}=b_{\l,\mu}=1$.
\end{lemma}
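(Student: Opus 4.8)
The plan is to reduce the whole assertion to a single vanishing, $e_\g v_\mu=0$, where $\g=\d+\es_1$ and $v_\mu$ is the (essentially unique) $\gl_0$-highest weight vector of weight $\mu$ in $V_\l$. First I would determine $\mu$ explicitly. Atypicality of $\l$ with root $\g=\d+\es_1$ means $\tilde\l_0=\tilde\l_1$, and since $\mu=\l\pre$ is required to keep the same atypical root $\d+\es_1$, the value $\tilde\l_1$ must remain in the first coordinate after lowering. By the definition of $\pre$ in (\ref{l-nex-pre}) this can happen only when $a_-=1$ and $\tilde\l_1-1>\tilde\l_2$; hence $\mu=\l-\g$ is already $\gl_0$-dominant and the relative level $|\l-\mu|$ equals $1$.

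Next I would settle $b_{\l,\mu}=1$. As $\mu_0=\l_0-1$, the grading of $V_\l=U(\gl_{-1}\oplus\gl_{-2})\otimes L_\l^{(0)}$ by the eigenvalue of $\tfrac12 h_{2\d}$ forces every weight-$\mu$ vector into the level-$(-1)$ piece $\gl_{-1}\otimes L_\l^{(0)}$. Since $\gl_{-1}$ is the natural $\gl_0$-module and (as a $\gl_0$-weight) $\mu=\l-\es_1$ with $\tilde\l_1-1>\tilde\l_2$, the module $L_\mu^{(0)}$ occurs there with multiplicity one; this produces a $\gl_0$-highest weight vector $v_\mu$, so $b_{\l,\mu}\ge1$, while Lemma \ref{lemma-possible-primitive} gives $b_{\l,\mu}\le1$. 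Thus $b_{\l,\mu}=1$ and $v_\mu$ is unique up to scalar.

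Now comes the key structural observation. The space $\gl_{+1}v_\mu$ lies in the level-$0$ piece $V_\l^{(0)}=L_\l^{(0)}$, and it is stable under $\gl_0^+$ because $v_\mu$ is $\gl_0$-highest. Since $L_\l^{(0)}$ is $\gl_0$-irreducible, any nonzero $\gl_0^+$-stable subspace must contain the highest weight vector $v_\l$; but the only element of $\gl_{+1}$ that sends $v_\mu$ to weight $\l$ is $e_\g$. Consequently $e_\g v_\mu=0$ already forces $\gl_{+1}v_\mu=0$, i.e. $v_\mu$ is a $\gl$-highest weight (strongly primitive) vector. This gives $\mu\in P(V_\l)$ and exhibits $L_\mu$ as a composition factor of $V_\l$, so $a_{\l,\mu}\ge1$; together with $a_{\l,\mu}\le b_{\l,\mu}=1$ this yields $a_{\l,\mu}=b_{\l,\mu}=1$, as required.

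It therefore remains to prove $e_\g v_\mu=0$, which is the heart of the matter. I would build $v_\mu$ explicitly in the form $v_\mu=f_\g v_\l+\sum_{j}(\cdots)$, where the correction terms are obtained by applying the odd root vectors $f_{\d+\es_j}$ and $f_{\d-\es_j}$ to the vectors $f_{\es_1-\es_j}v_\l$ and $f_{\es_1+\es_j}v_\l$ of $L_\l^{(0)}$, the coefficients being fixed by the requirement that $v_\mu$ be annihilated by every simple raising operator --- the same kind of construction used in Lemma \ref{lemma-tail-primitive}, now transported to the non-tail root $\d+\es_1$. Commuting $e_\g$ past each factor, using the brackets (\ref{e-a-j,f-a-j}), turns $e_\g v_\mu$ into a scalar multiple of $v_\l$; the diagonal term gives $h_\g v_\l=(\l,\g)v_\l$, and the contributions of the correction terms must cancel it. The main obstacle is exactly this cancellation: one must verify that the Clebsch--Gordan coefficients of $v_\mu$ conspire with the structure constants so that the total coefficient of $v_\l$ vanishes, the vanishing being forced by the atypicality relation $(\l+\rho,\g)=0$. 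I expect the bookkeeping of the corrections --- especially the additional terms coming from the roots $\es_a+\es_b$ present in type $D(m,1)$ --- to be the most delicate point.
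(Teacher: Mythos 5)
Your opening reductions are correct, and in places cleaner than the paper's own: the identification $\mu=\l-\g$ with $a_-=1$ (so $\tilde\l_0=\tilde\l_1>\tilde\l_2+1$, exactly as the paper records), the grading argument locating every weight-$\mu$ vector in the level $-1$ piece $\gl_{-1}\otimes L^{(0)}_\l$ together with the Pieri-type multiplicity count giving $b_{\l,\mu}=1$ outright (the paper instead gets $b_{\l,\mu}\le1$ from the uniqueness argument of Lemma \ref{lemma-tail-primitive}), and the Engel-type observation that $\gl_{+1}v_\mu$ is a $\gl_0^+$-stable subspace of the irreducible $L^{(0)}_\l$, so that $e_\g v_\mu=0$ alone forces $v_\mu$ to be a $\gl$-highest weight vector. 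But your proof stops exactly where the content of the lemma lies: you never prove $e_\g v_\mu=0$. Your last paragraph is a plan, not an argument --- you posit an ansatz, assert the coefficients can be tuned, and say you \emph{expect} the cancellation forced by $(\l+\rho,\g)=0$ to come out. That expected cancellation is the whole point: for typical $\l$ the unique $\gl_0$-highest vector of weight $\l-\d-\es_1$ in level $-1$ exists just the same, and there $e_\g v_\mu\ne0$, so atypicality must enter through a genuine computation. The paper's proof consists almost entirely of this verification: it decomposes $\g=\d+\es_1$ into the chain $\tau_{-m-1},\dots,\tau_m$, defines $f_S$ for subsets $S\subset\{-m,\dots,-1,1,\dots,m\}$ subject to (\ref{S-cond}), fixes the coefficients $c_S$ explicitly, and then checks, as in Case 1 of the proof of Lemma \ref{lemma-tail-primitive}, that $v_\mu$ is killed by all simple raising operators. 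Note also that your proposed corrections, odd root vectors applied to the single vectors $f_{\es_1\mp\es_j}v_\l$, underestimate the combinatorics: the weight spaces $\mu+\b$ of $L^{(0)}_\l$ are in general more than one-dimensional, which is why the paper needs products of several even root vectors rather than one factor per correction term.

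It is worth pointing out that your own reduction admits a short, computation-free finish which would close the gap (the paper itself remarks that ``formal arguments'' are possible but opts for the explicit construction because it gives extra information). Since $v_\mu$ lies in level $-1$, it is annihilated by all even positive root vectors: by $\gl_0^+$ because it is $\gl_0$-highest, and by $e_{2\d}$ because that would land in level $+1$, which is zero. Hence the quadratic Casimir gives $\Omega v_\mu=(\mu,\mu+2\rho)v_\mu+\sum_{\b\in\D_1^+}c_\b f_\b e_\b v_\mu$ with all $c_\b\ne0$, while $\Omega$ acts on $V_\l$ by $(\l,\l+2\rho)$. Now $(\l,\l+2\rho)-(\mu,\mu+2\rho)=2(\l+\rho,\g)-(\g,\g)=0$, by atypicality and the isotropy of $\g$, so $\sum_{\b\in\D_1^+}f_\b\otimes e_\b v_\mu=0$ inside $\gl_{-1}\otimes L^{(0)}_\l$; as the $f_\b$ are linearly independent, $e_\b v_\mu=0$ for every $\b\in\D_1^+$, in particular $e_\g v_\mu=0$. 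With this step supplied (or with the paper's explicit $c_S$-construction carried out), your argument is complete; as written, the decisive vanishing is missing.
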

\begin{proof}In principle we can prove the lemma by formal arguments, but we prefer to
construct the corresponding highest weight vectors explicitly, as
they give extra information. Now we have
$\tilde\l_0=\tilde\l_1>\tilde\l_2+1$ and
$\tilde\mu_0=\tilde\mu_1=\tilde\l_0-1$. From this and the proof of
Lemma \ref{lemma-tail-primitive}, we immediately obtain
$b_{\l,\mu}\le1$. We assume $k=2m+1$ as the case $k=2m$ is similar.
We can decompose $\g$ as
$$\g=\d+\es_1=\tau_{-m-1}+\tau_{-m}+\cdots+\tau_{-2}+\tau_{-1}+\tau_{1}+\tau_{2}+\cdots+\tau_m,$$
where
\begin{eqnarray*}\label{alpha-1}
\!\!\!\!\!\!\!\!\!\!\!\!&\!\!\!\!\!\!\!\!\!\!\!\!&
\tau_{-m-1}=\d-\es_1,\,\ \ \ \ \tau_{i}=\es_{m+i+1}-\es_{m+i+2}\,\
(-m\le
i\le -2),\\
\label{alpha-2} \!\!\!\!\!\!\!\!\!\!\!\!&\!\!\!\!\!\!\!\!\!\!\!\!&
\tau_{-1}=\tau_1=\es_m,\,\ \ \ \ \,\tau_i=\es_{m-i+1}-\es_{m-i+2}\,\
(2\le i\le m).
\end{eqnarray*}
For any $i,j$ with $-m-1\le i\le j\le m$, we denote
\equan{alpha-ij}{\tau_{ij}=\sum\limits_{i\le p\le j,\,p\ne0}\tau_p.}
Then $\tau_{ij}$ is not a root if and only if $j=-i$, in this case
we set $f_{\tau_{ij}}=0$. Set $I=\{-m,-m+1,...,-1,1,2,...,m\}$. For
any subset $S=\{i_1<i_2<\cdots<i_p\}$ of $I$ satisfying
(\ref{S-cond}), we define
\equan{new-f-S-B}{
f_S=f_{\tau_{-m-1,i_1-1}}f_{\tau_{i_1,i_2-1}}\cdots
f_{\tau_{i_{p-1},i_p-1}}f_{\tau_{i_p,m}}.}
Note that if
$S=\emptyset$, then $f_S=f_{\tau_{-m-1,m}}=f_{\d+\es_1}$. Also,
$f_S=0$ if $i_{p'+1}=-i_{p'}+1$ for some $p'<p$ or $i_{p}=-m$. So we
suppose
\equa{S-cond}{i_{p'+1}\ne-i_{p'}+1\ \,(1\le p'< p),\ \ \ \
i_{p}\ne-m.}
Now we define $v_\mu=\sum_{S\subset I}c_Sf_Sv_\l$ such
that $c_S\in\C$ is defined by
\equan{c-S-1}{
c_S=\left\{\begin{array}{cl} 1&\mbox{if \ }S=I,\\[4pt]
(\l_1-\l_{m+2-q}+m-q)c_{S'}&\mbox{if \ }S\subsetneqq
I,\,1<q\le m,\\[4pt]
(\l_m+1)c_{S'}&\mbox{if \ }S\subsetneqq
I,\,q=1,\\[4pt]
-(\l_q-\l_1-q)c_{S'}&\mbox{if \ }S\subsetneqq I,\,q<0,
\end{array}\right.}
where  $q$ is the maximal integer in $I\bs S$ such that
$S'=S\cup\{q\}$ satisfies condition (\ref{S-cond}). Note that in all
cases $c_S\ne0$ as long as $c_{S'}\ne0$. Now as in Case 1 in the
proof of Lemma \ref{lemma-tail-primitive}, we see $v_\mu$ is a
$\fg$-highest weight vector.
\end{proof}

\subsection{Main results on generalised Verma modules}
Now we can prove the following result on the structure of
generalised Verma modules. Fix an atypicality type ${\bar\l}$, and
let $\l^{(i)} \in P^{0+}_{\bar\l}$ be the weights defined by
Definition \ref{defi-aty-type-set}.

\begin{remark}\rm
Recall from Lemma \ref{l-0-unique} that
$P^{0+}_{\bar\l}=\{\l^{(i)}_\pm \mid {\sc\,}i\in\Z\}$ with
$\l^{(i)}_+=\l^{(i)}$ for all $i$. By Remark \ref{rem-l-m}, the
generalised Verma modules $V_{\l^{(i)}_+}$ and $V_{\l^{(i)}_-}$ have
the same structure when $\l^{(i)}_-$ is defined.
\end{remark}

\begin{theorem}\label{theorem-verma-module}
For $\l=\l^{(i)}\in P^{0+}_{\bar\l}$, the composition factors of the
generalised Verma module $V_\l$ all have multiplicity $1$.
Furthermore, the primitive weight graph of $V_\l$ can be described
as follows.
\begin{enumerate}
\item
If $k=2m+1$ or $0\in S(\bar\l)$ $($in this case,
$(\l^{(i)})^\si=\l^{(1-i)})$, then \equa{Stru-Kac-mod}{
\begin{array}{c}\!\!\!\!\!\!\!\!\l=\l^{(i)}\\[-4pt]\downarrow\\\l^{(i-1)}\\[2pt]\sc i\le0\end{array}\
, \ \ \ \ \ \
\begin{array}{c}\!\!\!\!\!\!\!\!\l=\l^{(1)}\\[-4pt]\downarrow\\\l^{(-1)}\\ \end{array}\ ,\
\ \ \ \
\raisebox{15pt}{\mbox{$\begin{array}{c}\!\!\!\!\!\!\!\!\!\!\!\!\l=\l^{(2)}\\
\downarrow\searrow\\[-2pt]\ \ \ \l^{(0)} \ \l^{(1)}\end{array}
\put(-17,-15){$\vector(-1,-2){8}$}\put(-32,-42){$\l^{(-1)}$}\put(-40,-15){$\vector(1,-2){8}$}
\put(-42,10){$\vector(-1,-3){20}$}\put(-68,-60){$\l^{(-2)}$}\put(-50,-47){$\vector(2,1){16}$}$}}\
,\ \ \ \ \ \ \raisebox{25pt}{\mbox{$\begin{array}{c}\!\!\!\!\!\!\!\!\l=\l^{(i)}\\
\phantom{\downarrow}\searrow\\[-2pt]\ \ \ \phantom{\l^{(0)}} \ \l^{(i-1)}\end{array}
\put(-17,-15){$\vector(-1,-2){8}$}
\put(-32,-42){$\l^{(1-i)}$}
\put(-42,10){$\vector(-1,-3){20}$}\put(-68,-60){$\l^{(-i)}$}\put(-50,-47){$\vector(2,1){16}$}\put(-45,-68){$\sc
i\ge3$}$}}\ .}
\item
If $k=2m$ and $0\notin S(\bar\l)$ $($in this case,
$(\l^{(i)})^\si=\l^{(-i)})$, then \equa{Stru-Kac-mod-1}{
\begin{array}{c}\!\!\!\!\!\!\!\!\l=\l^{(i)}\\[-4pt]\downarrow\\\l^{(i-1)}\\[2pt]\sc i<0\end{array}\
, \ \ \ \ \ \
\begin{array}{c}\!\!\!\!\!\!\!\!\l=\l^{(0)}\\[-4pt]\swarrow\searrow\\\l^{(-1)}_+\ \ \l^{(-1)}_-\\ \end{array}\ ,
\ \ \ \ \ \ \raisebox{25pt}{\mbox{$\begin{array}{c}\!\!\!\!\!\!\!\!\l=\l^{(i)}\\
\phantom{\downarrow}\searrow\\[-2pt]\ \ \ \phantom{\l^{(0)}} \ \l^{(i-1)}\end{array}
\put(-17,-15){$\vector(-1,-2){8}$}
\put(-32,-42){$\l^{(-i)}$}
\put(-42,10){$\vector(-1,-3){20}$}\put(-68,-60){$\l^{(-i-1)}$}\put(-50,-47){$\vector(2,1){16}$}\put(-45,-68){$\sc
i\ge1$}$}}\ .}
\end{enumerate}
\end{theorem}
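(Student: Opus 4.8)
The plan is to induct on $i$ and to reduce everything to the non-tail weights $\l=\l^{(i)}$ with $i\ge1$: the tail weights, namely all $\l^{(i)}$ with $i\le0$, are already settled by Lemma \ref{lemma-tail-primitive}, which gives both the graph $\l^{(i)}\rrar\l^{(i-1)}$ and the identity $\ch V_{\l^{(i)}}=\ch L_{\l^{(i)}}+\ch L_{\l^{(i-1)}}$ for $i\le0$. I would carry out Case (1) ($(\l^{(i)})^\si=\l^{(1-i)}$) in detail and obtain Case (2) ($(\l^{(i)})^\si=\l^{(-i)}$) by the same method, the only genuine change being that the turning point $\l^{(0)}$ splits into the pair $\l^{(-1)}_\pm$ recorded in Lemma \ref{lemma-tail-primitive}(2).

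First I would pin down the candidate primitive weights. For $\l=\l^{(i)}$, $i\ge1$, Lemma \ref{lemma-possible-primitive}(1) gives, after the identifications $(\l^{(i)})\pre=\l^{(i-1)}$, $(\l^{(i)})^\si=\l^{(1-i)}$, $((\l^{(i)})^\si)\pre=\l^{(-i)}$ and $((\l^{(i)})^\si)\nex=\l^{(2-i)}$,
\[
P(V_{\l^{(i)}})\subset P_0(V_{\l^{(i)}})\subset\{\l^{(i)},\,\l^{(i-1)},\,\l^{(1-i)},\,\l^{(-i)},\,\l^{(2-i)}\},
\]
together with the bound $b_{\l^{(i)},\l^{(i-1)}}\le1$. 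That $\l^{(i-1)}$ is genuinely primitive, directly derived from $\l^{(i)}$, and occurs exactly once is (the appropriate version of) Lemma \ref{lemma-two-nontail}, whose explicit $\fg$-highest weight vector realises the edge $\l^{(i)}\rrar\l^{(i-1)}$. The weights $\l^{(1-i)},\l^{(-i)}$ are then forced in through the reflected module $V_{\l^{(1-i)}}$, which for $i\ge2$ is a tail module with graph $\l^{(1-i)}\rrar\l^{(-i)}$ by Lemma \ref{lemma-tail-primitive}, and through the submodule $U(\fg)v_{\l^{(i-1)}}$, a quotient of $V_{\l^{(i-1)}}$ whose structure is known inductively.

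The multiplicities and the exact list of factors would then be fixed by characters. The engine is
\[
\chi^V_{\l^{(i)}}=\ch V_{\l^{(i)}}-\ch V_{(\l^{(i)})^\si}=\ch V_{\l^{(i)}}-\ch V_{\l^{(1-i)}},
\]
which follows from Lemma \ref{char-Verma} and the splitting $W=W_0\times\Z_2$ exactly as in Proposition \ref{typical-prop}. Since by induction $\ch L_{\l^{(j)}}$ is known for every $j<i$ and $L_{\l^{(i)}}$ is, by the candidate list, the only possibly new factor, solving $\ch V_{\l^{(i)}}=\sum_j a_{\l^{(i)},\l^{(j)}}\ch L_{\l^{(j)}}$ determines all the coefficients; the inputs are $\ch V_{\l^{(1-i)}}=\ch L_{\l^{(1-i)}}+\ch L_{\l^{(-i)}}$ (for $i\ge2$) and an explicit evaluation of $\chi^V_{\l^{(i)}}$ by the cancellation used for \refequa{SOSOS}. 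For $i\ge3$ this yields
\[
\ch V_{\l^{(i)}}=\ch L_{\l^{(i)}}+\ch L_{\l^{(i-1)}}+\ch L_{\l^{(1-i)}}+\ch L_{\l^{(-i)}},
\]
so each of the four factors occurs once and $\l^{(2-i)}$ drops out. The values $i=1,2$ are the exceptions: the reflection centre lies between $\l^{(0)}$ and $\l^{(1)}$, the half-integer weight attached to $L_{\l^{(0)}}$ in Lemma \ref{lemma-tail-primitive}(1) persists, and $\chi^V_{\l^{(2)}}$ carries the extra factor $L_{\l^{(0)}}$ (with $\l^{(2-i)}=\l^{(0)}$ now surviving); this is precisely why the graph for $i=2$ has five vertices while those for $i\ge3$ have four, and why the $i=1$ graph collapses to $\l^{(1)}\rrar\l^{(-1)}$.

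The last and hardest step is to upgrade the composition factors to the directed graph, i.e.\ to decide which derivations are direct. The top edges $\l^{(i)}\rrar\l^{(i-1)}$ and $\l^{(i)}\rrar\l^{(1-i)}$ come from the explicit highest weight vectors and from $\l^{(1-i)}=(\l^{(i)})^\si$; the lower edges $\l^{(i-1)}\rrar\l^{(-i)}$ and $\l^{(1-i)}\rrar\l^{(-i)}$ come from the induced structures on $U(\fg)v_{\l^{(i-1)}}$ and on the tail module $V_{\l^{(1-i)}}$. The point requiring real care is to exclude a direct edge $\l^{(i)}\rrar\l^{(-i)}$, i.e.\ to check that every $\fg$-highest weight vector of weight $\l^{(-i)}$ already lies in $U(\fg)v_{\l^{(i-1)}}+U(\fg)v_{\l^{(1-i)}}$; this I would do with the leading-term/prime-term analysis of Lemma \ref{gl0-highest}. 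This diamond-closure argument, together with the degenerate reflection at $i=1,2$ and the branch at $\l^{(0)}$ in Case (2), is where the main difficulty lies---the character count alone fixes the vertices and their multiplicities, but not the arrows.
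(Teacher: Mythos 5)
Your candidate-weight analysis and the reduction to the tail case via Lemma \ref{lemma-tail-primitive} match the paper, and the vertex lists you arrive at for $i\ge3$ agree with Theorem \ref{theorem-verma-module}; but the final directed graph you describe is not the graph of the theorem, and your ``hardest step'' is aimed at proving something false. In the fourth graph of (\ref{Stru-Kac-mod}) the bottom vertex is $\l^{(1-i)}$, not $\l^{(-i)}$: the edges are $\l^{(i)}\rrar\l^{(i-1)}\rrar\l^{(1-i)}$ and $\l^{(i)}\rrar\l^{(-i)}\rrar\l^{(1-i)}$. Your asserted top edge $\l^{(i)}\rrar\l^{(1-i)}$ and lower edges $\l^{(i-1)}\rrar\l^{(-i)}$, $\l^{(1-i)}\rrar\l^{(-i)}$ are all absent; indeed $\l^{(i-1)}\rrar\l^{(-i)}$ is impossible, because $U(\fg)v_{\l^{(i-1)}}$ is a quotient of $V_{\l^{(i-1)}}$, whose composition factors are (inductively) among $L_{\l^{(i-1)}},L_{\l^{(i-2)}},L_{\l^{(2-i)}},L_{\l^{(1-i)}}$, so $L_{\l^{(-i)}}$ can never be derived from $v_{\l^{(i-1)}}$. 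Worse, the edge $\l^{(i)}\rrar\l^{(-i)}$ that you set out to \emph{exclude} must be present: as the paper argues at $i=2$, if $v_{\l^{(i)}}$ did not directly derive $v_{\l^{(-i)}}$, then inside $V_{\l^{(i+1)}}$ the submodule generated by the primitive vector of weight $\l^{(i)}$ would be the finite-dimensional module $L_{\l^{(i)}}$, contradicting the fact that a generalised Verma module contains no finite-dimensional submodule. So your ``diamond-closure'' argument, even if carried out, would establish an incorrect digraph; the correct mechanism is the opposite one (forcing the long edge to exist, and showing $v_{\l^{(1-i)}}\in U(\fg)v_{\l^{(-i)}}$, as the paper does explicitly for ${\mathfrak{osp}}_{3|2}$).

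There are two further gaps in the multiplicity bookkeeping. First, your identity $\chi^V_{\l^{(i)}}=\ch V_{\l^{(i)}}-\ch V_{\l^{(1-i)}}$ is correct, but it cannot ``determine all the coefficients'': in $\ch V_{\l^{(i)}}=\sum_j a_{\l^{(i)},\l^{(j)}}\ch L_{\l^{(j)}}$ the character $\ch L_{\l^{(i)}}$ is itself unknown, and the only usable constraint is $W$-invariance, which pins down the coefficients of the non-$W$-invariant characters $\ch L_{\l^{(j)}}$, $j<0$, but says nothing about the coefficients of the $W$-invariant characters $\ch L_{\l^{(0)}}$ and $\ch L_{\l^{(i-1)}}$. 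Precisely at the exceptional values you wave at, namely $a_{\l^{(1)},\l^{(0)}}=0$ and $a_{\l^{(2)},\l^{(0)}}=1$, the paper must compute the $\fg_0$-multiplicities $b_{\l^{(1)},\l^{(0)}}=0$ and $b_{\l^{(2)},\l^{(0)}}=1$ directly, by the sign-pairing cancellation in the Weyl-character sum (\ref{b-l-mu=}) (the proof of (\ref{b-l-1-l-0=0})); ``the half-integer weight attached to $L_{\l^{(0)}}$ persists'' is an assertion, not a substitute for this computation. Second, Lemma \ref{lemma-two-nontail} genuinely requires the atypical root of both $\l$ and $\l\pre$ to be $\d+\es_1$, which holds only for $i\gg0$, since the atypical entry migrates to the first coordinate only after finitely many raisings; there is no ``appropriate version'' available for intermediate $i$, and the paper closes this by a downward maximal-counterexample argument: if $i_0\ge2$ were maximal with $\l^{(i_0-1)}\notin P(V_{\l^{(i_0)}})$, then $b_{\l^{(i_0)},\l^{(i_0-1)}}=1$ would force $\l^{(i_0-1)}\in P_0(L_{\l^{(i_0)}})\subset P_0(V_{\l^{(i_0+1)}})$, contradicting Lemma \ref{lemma-possible-primitive}(1). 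Your upward induction on $i$ cannot supply this step, so even the existence of the edges $\l^{(i)}\rrar\l^{(i-1)}$ for $2\le i\ll\infty$ is unproven in your outline.
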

\begin{proof} The fact that the multiplicity of each composition
factor is $1$ has  already been proven in Lemma
\ref{lemma-possible-primitive} for some cases, and the remaining
cases are treated presently. As for the rest of the theorem, i.e.,
statements (1) and (2), we note that their proofs are similar, thus
we consider (1) only.

First we need the following: for any integral $\fg_0$-dominant
weight $\l,\mu$ with $\mu\le\l$, using the well-known Weyl character
formula of a $\fg_0$-module, by (\ref{b-l-m}) and
(\ref{char-Verma-for}), we have
\equa{b-l-mu=}{b_{\l,\mu}=\sum\limits_{\stackrel{\stackrel{\ssc
p\in\Z_+,\,S\subset\D_1^+,\,|S|=|\l-\mu|-2p}{\ssc
\nu:=\l-\sum_{\b\in S}\b-2p\d\ {\rm is\ regular}}}{\ssc w\in
W_0,\,\nu=w\cdot\mu}}\sign(w).} We already have the first graph of
(\ref{Stru-Kac-mod}) by Lemma \ref{lemma-tail-primitive}. To prove
the second, we first use (\ref{b-l-mu=}) to prove
\equa{b-l-1-l-0=0}{b_{\l^{(1)},\l^{(0)}}=0.} For convenience,
suppose $k=2m+1$ and $\wt{\l^{(0)}}$ has the form (\ref{l-(0)}).
Then $a=j+\frac12$, $\tilde\l_{m-i}=i+\frac12$ for $j-1\ge i\ge 0$,
and $\wt{\l^{(1)}}$ has the form (\ref{l-(0)}) with $-a$ replaced by
$a$ in the first place. Suppose (\ref{b-l-mu=}) with
$\l=\l^{(1)},\,\mu=\l^{(0)}$ has a term $\sign({w_0})$ with
$p_0\in\Z_+,\,S_0\subset\D_1^+,\,w_0\in W_0$ such
that\equa{mu-l====}{\mbox{$\nu:=\l^{(1)}-\sum_{\b\in S_0}\b-2p_0\d$
is regular, and $\nu=w_0\cdot\l^{(0)}$.}} This implies that
$\tilde\nu_i-\tilde\l_i=0,\pm1$ for $1\le i\le m$. First assume
$\tilde\nu_i=\tilde\l_i-1$ for some $i\ne m$. Then (\ref{mu-l====})
implies that \equan{000}{\mbox{$\tilde\l_{i+1}=\tilde\l_i-1$,
$\tilde\nu_{i+1}=\tilde\l_i$, and $\d+\es_{i},\d-\es_{i+1}\in S_0$
but $\d-\es_i,\d+\es_{i+1}\notin S_0$.}} If we choose the subset
$S_1=(S\cup\{\d-\es_i,\d+\es_{i+1}\})\bs\{\d+\es_{i},\d-\es_{i+1}\}$
of $\D_1^+$, and also $w_1=(i,i+1)w_0\in W_0$, where $(i,i+1)$ is
the permutation exchanging $i$-th and $(i+1)$-th coordinates, then
(\ref{b-l-mu=}) has another term $\sign({w_1})=-\sign({w_0})$ which
cancels $\sign({w_0})$. Next assume
$\tilde\nu_m=\tilde\l_m-\frac12=-\frac12$. Then $\d+\es_m\in S_0$,
$\d-\es_m\notin S_0$. If we take $w_1=(-m)w$ (where $(-m)$ is the
element in $W_0$ which changes the sign of the $m$-th coordinate),
and\equan{0a0a}{\begin{array}{lll}
S_1=(S_0\cup\{\d\})\bs\{\d+\es_m\}&\mbox{if  }\d\notin S_0,\mbox{ \
or}\\[5pt]
S_1=S_0\bs\{\d,\d+\es_m\}\mbox{ and }p_1=p_0+1&\mbox{if }\d\in
S_0,\end{array}} then again (\ref{b-l-mu=}) has another term
$\sign({w_1})=-\sign({w_0})$ which cancels $\sign({w_0})$. Thus we
obtain (\ref{b-l-1-l-0=0}), and $\l^{(0)}\notin
P(V_{\l^{(1)}})\subset\{\l^{(1)},\l^{(0)},\l^{(-1)}\}$ by Lemma
\ref{lemma-possible-primitive}(1). Since $L_{\l^{(1)}}$ is finite
dimensional, we must have $\l^{(-1)}\in P(V_{\l^{(1)}})$. Using
Lemma \ref{lemma-tail-primitive}(1) and as in the proof of Lemma
\ref{lemma-tail-primitive}, we can obtain \equa{ch-l-1}{\ch
L_{\l^{(1)}}=\chi^{\rm BL}_{\l^{(1)}}-\ch
L_{\l^{(0)}}-(a_{\l^{(1)},\l^{(-1)}}-1)\ch L_{\l^{(-1)}}.} Since all
terms except the last term in the right-hand side of (\ref{ch-l-1})
are $W$-invariant, we obtain $a_{\l^{(1)},\l^{(-1)}}=1$, and we have
the second graph of (\ref{Stru-Kac-mod}).

Now we prove $\l^{(i)}\rrar\l^{(i-1)}$ for $i\ge2$. This is true
when $i\gg0$ by Lemma \ref{lemma-two-nontail} and Definition
\ref{defi-aty-type-set}(\ref{l-(i)}). Suppose $\l^{(i_0-1)}\notin
P(V_{\l^{(i_0)}})$ for some $i_0\ge2$ and $i_0$ is maximal. This
together with $b_{\l^{(i_0)},\l^{(i_0-1)}}=1$ (which can be proved
as in (\ref{b-l-1-l-0=0})) implies $\l^{(i_0-1)}\in
P_0(L_{\l^{(i_0)}})$ (cf.~(\ref{P-0-V-l})). By the choice of $i_0$,
we have $\l^{(i_0)}\in P(V_{\l^{(i_0+1)}})$, thus
$P_0(L_{\l^{(i_0)}})\subset P_0(V_{\l^{(i_0+1)}})$, and so
$\l^{(i_0-1)}\in P_0(V_{\l^{(i_0+1)}})$, which contradicts Lemma
\ref{lemma-possible-primitive}(\ref{P-0-subset}).

As in the proof of (\ref{b-l-1-l-0=0}), we have
$b_{\l^{(2)},\l^{(0)}}=1$.
 This together with
the arguments in the last paragraph shows $\l^{(2)}\rrar\l^{(0)}$.
Since $L_{\l^{(0)}},\,L_{\l^{(1)}}$ are finite-dimensional and
generalised Verma modules do not contains finite-dimensional
submodules, we must have $\l^{(1)}\rrar\mu$, $\l^{(0)}\rrar\nu$ in
$V_{\l^{(2)}}$ for some non-dominant weights $\mu,\nu$. But
$\l^{(1)}\rrar\mu$, $\l^{(0)}\rrar\nu$ must be quotients of
$V_{\l^{(1)}},\,V_{\l^{(0)}}$ respectively, by the first two graph
of (\ref{Stru-Kac-mod}), $\mu,\,\nu$ have to be $\l^{(-1)}$. Thus we
have the third graph of (\ref{Stru-Kac-mod}) except the relation
concerning $\l^{(-2)}$. If we do not have $\l^{(2)}\rrar\l^{(-2)}$
in $V_{\l^{(2)}}$, then in $V_{\l^{(3)}}$, the submodule
$U_{\l^{(2)}}$ generated by $v_{\l^{(2)}}$ (the primitive vector
with weight $\l^{(2)}$), which is a quotient of $V_{\l^{(2)}}$, has
to be $L_{\l^{(2)}}$, contradicting that $V_{\l^{(3)}}$ does not
contain a finite-dimensional submodule. Thus we have
$\l^{(2)}\rrar\l^{(-2)}$ and do not have $\l^{(-1)}\rrar\l^{(-2)}$
in $V_{\l^{(2)}}$. As in (\ref{ch-l-1}), we have
\equan{ch-l-2}{\ch L_{\l^{(2)}}=\chi^{\rm BL}_{\l^{(2)}}
-(a_{\l^{(2)},\l^{(-1)}}-1)\ch L_{\l^{(-1)}}
-(a_{\l^{(2)},\l^{(-2)}}-1)\ch L_{\l^{(-2)}},}
which proves that
$a_{\l^{(2)},\l^{(-1)}}=a_{\l^{(2)},\l^{(-2)}}=1$.

The fact that $\l^{(-2)}\rrar\l^{(-1)}$ is not needed for our
purpose of computing characters, nevertheless, we can prove this
fact for the case $\fg={\mathfrak{osp}}_{3|2}$ as follows:
 Let $\l=\l^{(2)}$, $\mu=\l-2(\l_0+1)\d$
(note that $\mu\notin P^{0+}_{\bar\l}$, i.e., $\bar\l$ is not the
atypical type of $\mu$), then as in the proof of
(\ref{b-l-1-l-0=0}), $b_{\l,\mu}=1$ and the unique $\fg_0$-highest
weight vector with weight $\mu$ (up to a nonzero scalar) is
$v_\mu:=f_{2\d}^{\l_0+1}v_\l$ (cf.~definition of Kac-modules in
(\ref{Kac-module})). We can also prove $b_{\l^{(-2)},\mu}=1$ as in
(\ref{b-l-1-l-0=0}), and $b_{\l^{(-3)},\mu}=0$ (in fact
$\l^{(-3)}<\mu$). Thus $\mu$ is a $\fg_0$-highest weight of
$L_{\l^{(-2)}}$, and $v_\mu$ is in the submodule $U_{\l^{(-2)}}$ of
$V_{\l^{(2)}}$ generated by $v_{\l^{(-2)}}$ (primitive vector with
weight $\l^{(-2)}$). Since the Kac-module $K_{\l^{(2)}}$ is finite
dimensional and $L_{\l^{(-1)}}$ is infinite-dimensional, $v_\mu$
must generate $v_{\l^{(-1)}}$ (the $\fg$-primitive vector with
weight $\l^{(-1)}$ in $V_{\l^{(2)}}$). This proves the third graph
of (\ref{Stru-Kac-mod}).

Now suppose $i\ge3$ and inductively assume that we have the graph in
(\ref{Stru-Kac-mod}) for all $\l^{(i_0)}$ with $i_0<i$. From this,
we can deduce as in (\ref{ch-l-1}) that
\equan{ch-l-i0}{\ch L_{\l^{(i_0)}}=\chi^{\rm BL}_{\l^{(i_0)}},\ \ \
2\le i_0<i.}
From Lemma \ref{lemma-possible-primitive}(1), we have
$P(V_{\l^{(i)}})\subset\{\l^{(i)},\l^{(i-1)},\l^{(2-i)},\l^{(1-i)},\l^{(-i)}\}$.
As above, we can then deduce
\begin{eqnarray*}\label{ch-l-i}
\ch L_{\l^{(i)}}\!\!&=\!\!&\chi^{\rm
BL}_{\l^{(i)}}-(a_{\l^{(i)},\l^{(1-i)}}-1)\ch L_{\l^{(1-i)}}
\nonumber\\&&-(a_{\l^{(i)},\l^{(-i)}}-1)\ch
L_{\l^{(-i)}}-a_{\l^{(i)},\l^{(2-i)}}\ch
L_{\l^{(2-i)}},
\end{eqnarray*}
which proves that
$a_{\l^{(i)},\l^{(1-i)}}=a_{\l^{(i)},\l^{(-i)}}=1$,
$a_{\l^{(i)},\l^{(2-i)}}=0$ as in (\ref{ch-l-1}), and we have the
last graph of (\ref{Stru-Kac-mod}).
\end{proof}

\subsection{Character and dimension formulae for irreducible modules}
Now we derive a character formula and a dimension formula for the
atypical finite-dimensional irreducible modules following similar
reasoning as that in the proof of \cite[Theorem 4.16]{SuZ2} (but the
case at hand is much easier). For atypical $\l\in P^+$, we define
\equan{m-l-m} {S_\l\!=\!\{\l,\l^{\si}\}\!\cap\!\{\nu\!\in\!
P^+\,|\,\nu\!\le\!\l\},\quad
m_\l\!=\!\#\big(\{\l,\l^\si\}\!\cap\!\{\nu\!\in\!
P^+\,|\,\nu\!\ge\!\l\}\big),} where $\l^\si$ is defined by
$(\ref{l=si})$. Then it is easy to see that
\[
S_\l=\left\{\begin{array}{l l}
\{\l,\l^\si\}, & \  \text{if $\l>\l^\si\in P^+$}, \\
\{\l\},& \  \text{otherwise},
\end{array}\right.
\quad
m_{\l}= \left\{\begin{array}{l l}
2, &\ \text{if $\mu<\mu^\si$}, \\
1, & \  \text{otherwise}.
\end{array}\right.
\]
For, $\mu\in S_\l$, we denote $\theta_{\l,\mu}\in W$ to be the
unique element with minimal length such that $\theta\cdot\l=\mu$,
namely, $\th_{\l,\mu}=1$ if $\l=\mu$ or $\th_{\l,\mu}=\si$
otherwise.

 By using Theorem \ref{theorem-verma-module},
 (\ref{ch-l-1}) and (\ref{ch-l-i}),  we immediately obtain
the following result.

\begin{theorem}\label{main-theo1}
Let $L_\l$ be the finite-dimensional irreducible $\fg$-module with
atypical highest weight $\l$.
\begin{enumerate}
\item The formal character $\ch L_\l$ of
$L_\l$ is given by
\equa{char-l-all}{\ch L_\l=\sum\limits_{\mu\in
S_\l}{\dis\frac{(-1)^{|\th_{\l,\mu}|}}{m_{\mu}R_{\bar0}}}\sum\limits_{w\in
W}\sign(w)w\Big(e^{\mu+\rho_{\bar0}}\prod\limits_{\b\in\D_1^+\bs\{\g_\mu\}}(1+e^{-\b})\Big),
}
where $\g_\mu$ is the atypical root of $\mu\in S_\l$, and
$|\th_{\l,\mu}|$ is the length of $\th_{\l,\mu}$.
\item
The dimension ${\rm dim\,} L_\l$ of $L_\l$ is given by
\equa{dim-l-all}{{\rm dim\,} L_\l=\sum\limits_{\mu\in
S_\l,\,B\subset\D_1^+\bs\{\g_\mu\}}(-1)^{|\th_{\l,\mu}|}m_{\mu}^{-1}\prod\limits_{\a\in
\D_{\bar0}^+}\dis\frac{(\a,\rho_{\bar0}+\mu-\sum_{\b\in
B}\b)}{(\a,\rho_{\bar0})}.}
\end{enumerate}
\end{theorem}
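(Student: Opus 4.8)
The plan is to deduce both formulae directly from Theorem~\ref{theorem-verma-module} and the character computations in its proof. Writing $\chi^{\rm BL}_\mu$ for the right-hand side of (\ref{B-L-formula}) and noting that $\D_1^+$ is finite, so each $\chi^{\rm BL}_\mu$ is a genuine finite character, the asserted identity (\ref{char-l-all}) is nothing but
\[
\ch L_\l=\sum_{\mu\in S_\l}\frac{(-1)^{|\th_{\l,\mu}|}}{m_\mu}\,\chi^{\rm BL}_\mu .
\]
Thus for part (1) I would read off $\ch L_{\l^{(i)}}$ for each $\l=\l^{(i)}\in P^+_{\bar\l}$ (that is, $i\ge0$) from Theorem~\ref{theorem-verma-module} and check that the data $S_\l,\,m_\mu,\,\th_{\l,\mu}$ repackage these characters into the single sum above; part (2) will then follow from part (1) by the classical Weyl dimension argument for the even subalgebra $\gl_{\bar0}$.

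Consider first the case $k=2m+1$ or $0\in S(\bar\l)$, where $(\l^{(i)})^\si=\l^{(1-i)}$. Lemma~\ref{lemma-tail-primitive} together with the proof of Theorem~\ref{theorem-verma-module} (see (\ref{ch-l-1}) and (\ref{ch-l-i})) gives $\ch L_{\l^{(0)}}=\frac12\chi^{\rm BL}_{\l^{(0)}}$, $\ch L_{\l^{(1)}}=\chi^{\rm BL}_{\l^{(1)}}-\frac12\chi^{\rm BL}_{\l^{(0)}}$, and $\ch L_{\l^{(i)}}=\chi^{\rm BL}_{\l^{(i)}}$ for $i\ge2$. Since the primitive weight graphs order the weights as $\cdots<\l^{(i-1)}<\l^{(i)}<\cdots$, one has $\l^{(0)}<\l^{(1)}$ (indeed $\l^{(1)}=\l^{(0)}+2a\d$ with $a>0$) and $\l^{(1-i)}<\l^{(i)}$ for $i\ge1$. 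Evaluating the displayed sum I would record: for $i\ge2$, $S_{\l^{(i)}}=\{\l^{(i)}\}$ with $m_{\l^{(i)}}=1$; for $i=1$, $S_{\l^{(1)}}=\{\l^{(1)},\l^{(0)}\}$ with $m_{\l^{(1)}}=1$ and $m_{\l^{(0)}}=2$; for $i=0$, $S_{\l^{(0)}}=\{\l^{(0)}\}$ with $m_{\l^{(0)}}=2$. As $\th_{\l,\l}=1$ and $\th_{\l,\l^\si}=\si$, the only nontrivial input is $\sign(\si)=-1$, which holds because $\si$ is the reflection in the root $2\d$ generating the factor $\Z_2=W(\mathfrak{sp}_2)$ of $W$. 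With $(-1)^{|\si|}=-1$ the three subcases reproduce exactly the three characters above.

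The case $k=2m$, $0\notin S(\bar\l)$ is handled identically, now from Theorem~\ref{theorem-verma-module}(2) and Lemma~\ref{lemma-tail-primitive}(2): here $(\l^{(i)})^\si=\l^{(-i)}$, one has $\ch L_{\l^{(0)}}=\chi^{\rm BL}_{\l^{(0)}}$ and $\ch L_{\l^{(i)}_+}=\chi^{\rm BL}_{\l^{(i)}_+}$ for $i\ge1$, and since $a=0$ the zeroth coordinate of $\l^{(0)}+\rho$ vanishes, forcing $(\l^{(0)})^\si=\l^{(0)}$, so $S_{\l^{(0)}}=\{\l^{(0)}\}$ and $m_{\l^{(0)}}=1$. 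The weights $\l^{(i)}_-$ are obtained from $\l^{(i)}_+$ by the outer automorphism $\tau$ of Remark~\ref{rem-l-m}, under which $S_\l$, $m_\mu$, $\th_{\l,\mu}$ and $\chi^{\rm BL}_\mu$ are all equivariant, so no separate computation is needed.

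To pass from (\ref{char-l-all}) to (\ref{dim-l-all}) I would expand the finite product $\prod_{\b\in\D_1^+\bs\{\g_\mu\}}(1+e^{-\b})=\sum_{B\subset\D_1^+\bs\{\g_\mu\}}e^{-\sum_{\b\in B}\b}$, so that $\chi^{\rm BL}_\mu$ becomes a finite signed sum of $\gl_{\bar0}$-Weyl characters $R_{\bar0}^{-1}\sum_{w\in W}\sign(w)\,w\big(e^{\mu-\sum_{\b\in B}\b+\rho_{\bar0}}\big)$, the even subalgebra $\gl_{\bar0}$ having Weyl group $W$, positive roots $\D_{\bar0}^+$ and Weyl vector $\rho_{\bar0}$. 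By the Weyl dimension formula the total weight multiplicity of such a term is $\prod_{\a\in\D_{\bar0}^+}\frac{(\a,\rho_{\bar0}+\mu-\sum_{\b\in B}\b)}{(\a,\rho_{\bar0})}$, with the convention that this product vanishes exactly when $\rho_{\bar0}+\mu-\sum_{\b\in B}\b$ lies on a wall, which is precisely when the antisymmetrised exponential vanishes; hence the dimension identity holds term by term with no regularity hypothesis, and summing over $B$ and over $\mu\in S_\l$ with the weights $(-1)^{|\th_{\l,\mu}|}m_\mu^{-1}$ yields (\ref{dim-l-all}). There is no deep difficulty here, since the structural input is supplied by Theorem~\ref{theorem-verma-module}; the main (if modest) obstacles are, in part (1), the boundary cases $i=0,1$ where $m_{\l^{(0)}}=2$ must reproduce the coefficient $\frac12$ and the sign $\sign(\si)=-1$ must be pinned down, and, in part (2), the legitimacy of extracting dimensions term by term — justified because $L_\l$ is finite dimensional, so $\ch L_\l$ is a genuine finite character and all rearrangements occur inside a finite sum, the possibly negative termwise dimensions cancelling to the true positive integer $\dim L_\l$.
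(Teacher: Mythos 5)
Your proposal is correct and follows essentially the same route as the paper, whose entire proof is the remark that the theorem follows immediately from Theorem \ref{theorem-verma-module} together with (\ref{ch-l-1}) and (\ref{ch-l-i}): you simply make explicit the bookkeeping the authors leave implicit, namely that $S_\l$, $m_\mu$ and $\sign(\th_{\l,\mu})$ repackage the characters $\ch L_{\l^{(0)}}=\frac12\chi^{\rm BL}_{\l^{(0)}}$, $\ch L_{\l^{(1)}}=\chi^{\rm BL}_{\l^{(1)}}-\frac12\chi^{\rm BL}_{\l^{(0)}}$ and $\ch L_{\l^{(i)}}=\chi^{\rm BL}_{\l^{(i)}}$ ($i\ge2$), with the $k=2m$, $0\notin S(\bar\l)$ case and the $\l^{(i)}_-$ weights handled exactly as in Lemma \ref{lemma-tail-primitive}(2) and Remark \ref{rem-l-m}. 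Your derivation of (\ref{dim-l-all}) by expanding over subsets $B$ and applying the Weyl dimension formula termwise (with the wall-vanishing convention matching the vanishing of the antisymmetrised sums) is the standard argument the paper tacitly assumes, and it is sound.
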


The following result on finite dimensional Kac modules can be easily
proven.
\begin{proposition}\label{Kac-ch}
The formal character $\ch K_\l$ of the finite-dimensional $($typical
or atypical$)$ Kac $\fg$-module $K_\l$ is $\ch K_\l=\chi^V_\l$ with
the right-hand side given by $(\ref{typical-char})$ unless
$\l^\si\in P^+$. If $\l^\si\in P^+$, then $\ch K_\l=\ch L_\l$ with
the right-hand side given by $(\ref{char-l-all})$. In particular,
\equa{Kac---}{\chi^V_{\l^{(i)}}=\ch L_{\l^{(i)}}+(-1)^{\d_{i,1}}\ch
L_{\l^{(i-1)}}\mbox{ \ for \ }i\ge1.}
\end{proposition}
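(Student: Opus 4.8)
The plan is to read $\ch K_\l$ off the composition series of $V_\l$ provided by Theorem~\ref{theorem-verma-module}, and to match it against $\chi^V_\l$ by a one–line manipulation of the Weyl group. The typical case is Proposition~\ref{typical-prop}, so assume $\l=\l^{(i)}\in P^+_{\bar\l}$ is atypical, whence $i\ge0$. Recall that $K_\l$, defined in (\ref{Kac-module}), is the maximal finite-dimensional quotient of $V_\l$. By Theorem~\ref{theorem-verma-module} every composition factor of $V_\l$ has multiplicity one, and the infinite-dimensional factors $L_{\l^{(j)}}$ with $j<0$ sit at the bottom of the primitive weight graph; they therefore generate a submodule whose only factors are these $L_{\l^{(j)}}$, and $K_\l$ is the quotient by it. Hence $\ch K_\l=\sum\ch L_{\l^{(j)}}$, the sum running over the finite-dimensional constituents ($j\ge0$) of $V_{\l^{(i)}}$.

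The second ingredient is the elementary identity
\[
\chi^V_\l=\frac{R_1}{R_{\bar0}}\sum_{w\in W_0}\sign(w)e^{w(\l+\rho)}-\frac{R_1}{R_{\bar0}}\sum_{w\in W_0}\sign(w)e^{w(\l^\si+\rho)},
\]
obtained by splitting $W=W_0\sqcup W_0\si$ in the defining sum (\ref{typical-char}) of $\chi^V_\l$ and using $\sign(w\si)=-\sign(w)$ together with $\si(\l+\rho)=\l^\si+\rho$, which is immediate from (\ref{dot-action}) and (\ref{l=si}). By Lemma~\ref{char-Verma} the first term is $\ch V_\l$; and since $\si$ alters only the $0$th coordinate of a weight (\ref{l=si}), the weight $\l^\si$ is again $\fg_0$-dominant and the same lemma identifies the second term as $\ch V_{\l^\si}$. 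This yields $\chi^V_\l=\ch V_\l-\ch V_{\l^\si}$, which I abbreviate as $(\star)$.

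The decisive step is to compare composition factors across $(\star)$ using Theorem~\ref{theorem-verma-module}. If $\l^\si\notin P^+$, that is $i\ge2$, then $\l^\si=\l^{(1-i)}$ with $1-i<0$, so $V_{\l^{(1-i)}}$ has the tail graph and its composition factors are exactly the infinite-dimensional factors of $V_{\l^{(i)}}$ lying at the bottom; subtracting them in $(\star)$ removes all and only the infinite-dimensional constituents, and what remains is $\ch K_\l$ by the first paragraph. Hence $\ch K_\l=\chi^V_\l$. If instead $\l^\si\in P^+$, that is $i\in\{0,1\}$, the graph of $V_{\l^{(i)}}$ shows that $L_\l$ is its only finite-dimensional factor, so $\ch K_\l=\ch L_\l$; here $\ch V_{\l^\si}$ in $(\star)$ carries the finite-dimensional factor $L_{\l^\si}$ with the opposite sign and $\chi^V_\l$ is no longer an effective character, so one instead quotes the character formula (\ref{char-l-all}) of Theorem~\ref{main-theo1} to put $\ch L_\l$ in the asserted form. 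The case $k=2m$, $0\notin S(\bar\l)$ is handled identically, now with $(\l^{(i)})^\si=\l^{(-i)}$.

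Finally, (\ref{Kac---}) is the specialisation of the above to the tower. For $i\ge2$ it is $\ch K_{\l^{(i)}}=\chi^V_{\l^{(i)}}$ rewritten through the finite-dimensional factors of $V_{\l^{(i)}}$ supplied by Theorem~\ref{theorem-verma-module}, while for $i=1$ it is the instance $\chi^V_{\l^{(1)}}=\ch L_{\l^{(1)}}-\ch L_{\l^{(0)}}$ of $(\star)$ (the factor $(-1)^{\d_{i,1}}$ recording that $\l^{(0)}=(\l^{(1)})^\si\in P^+$ contributes with a negative sign). I expect the only delicate point to be this borderline bookkeeping at $i=0,1$: one must verify that under $(\star)$ the finite- and infinite-dimensional constituents separate cleanly, so that when $\l^\si\notin P^+$ the subtracted Verma module accounts for precisely the infinite-dimensional factors, whereas when $\l^\si\in P^+$ the effective character is $\ch L_\l$ rather than the now non-effective $\chi^V_\l$.
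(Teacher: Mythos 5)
Your handling of the main dichotomy is correct and is in substance the paper's own proof: the paper argues exactly that the graph of $K_\l$ is that of $V_\l$ with all $\l^{(i)}$, $i<0$, deleted, and then invokes Theorems \ref{theorem-verma-module} and \ref{main-theo1}(1). Your one genuine addition, the splitting $W=W_0\sqcup W_0\si$ giving $\chi^V_\l=\ch V_\l-\ch V_{\l^\si}$, makes explicit the bookkeeping the paper hides behind its citation of (\ref{char-l-all}), and is a clean, self-contained way to match factors.

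However, your final step --- ``for $i\ge2$, (\ref{Kac---}) is $\ch K_{\l^{(i)}}=\chi^V_{\l^{(i)}}$ rewritten through the finite-dimensional factors of $V_{\l^{(i)}}$'' --- is asserted rather than executed, and it fails precisely at $i=2$. In case (1), the third graph of (\ref{Stru-Kac-mod}) gives $V_{\l^{(2)}}$ the five composition factors $L_{\l^{(2)}},L_{\l^{(1)}},L_{\l^{(0)}},L_{\l^{(-1)}},L_{\l^{(-2)}}$, of which \emph{three} are finite dimensional, while $V_{(\l^{(2)})^\si}=V_{\l^{(-1)}}$ has exactly the factors $L_{\l^{(-1)}},L_{\l^{(-2)}}$; so your own identity yields the three-term formula $\chi^V_{\l^{(2)}}=\ch L_{\l^{(2)}}+\ch L_{\l^{(1)}}+\ch L_{\l^{(0)}}$, not the two-term (\ref{Kac---}). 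In fact (\ref{Kac---}) as printed is erroneous at $i=2$: the paper's own later use in the proof of Lemma \ref{ch-appl}, namely $\chi^V_{\L^{(2)}}-\chi^V_{\L^{(1)}}=\ch L_{\L^{(2)}}+2\,\ch L_{\L^{(0)}}$, requires the three-term version, and for ${\mathfrak{osp}}_{3|2}$ evaluating dimensions as in (\ref{dim-l-all}) gives $\chi^V_{\L^{(2)}}\mapsto 36$ whereas $\ch L_{\L^{(2)}}+\ch L_{\L^{(1)}}\mapsto 30+5=35$. So the delicate boundary case is $i=2$, not only the $i=0,1$ you flagged; a correct write-up must either restrict (\ref{Kac---}) to $i=1$ and $i\ge3$ or add the summand $\ch L_{\l^{(0)}}$ at $i=2$ --- your framework actually detects this misprint, but as written your proof endorses the false identity. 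A secondary gloss of the same kind: ``the case $k=2m$, $0\notin S(\bar\l)$ is handled identically'' is true for the dichotomy $\ch K_\l=\chi^V_\l$ versus $\ch L_\l$, but not for (\ref{Kac---}); there $(\l^{(1)})^\si=\l^{(-1)}\notin P^+$ and the same factor count gives $\chi^V_{\l^{(1)}}=\ch L_{\l^{(1)}}+\ch L_{\l^{(0)}}$ with a \emph{plus} sign, so the factor $(-1)^{\d_{i,1}}$ is wrong in that case and (\ref{Kac---}) must be read within case (1) of Theorem \ref{theorem-verma-module}.
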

\begin{proof}
Note that the graph of  $K_\l$ is obtained from that of $V_\l$ by
deleting all weight $\l^{(i)}$ with $i<0$. This together with
Theorems \ref{theorem-verma-module} and \ref{main-theo1}(1) implies
the result.
\end{proof}

An application of the character formula (\ref{char-l-all}) will be
found in Lemma \ref{ch-appl}.

\section{First and second cohomology groups}

In this section we apply results obtained on generalised Verma
modules and irreducible modules to determine the first and second
cohomology groups of ${\mathfrak{osp}}_{k|2}$ with coefficients in
finite-dimensional irreducible modules.

\subsection{Lie superalgebra cohomology}
Let us begin by recalling some basic concepts of Lie superalgebra
cohomology. The material can be found in many sources, say,
\cite{FL, SZ98, SuZ1}. For $p\ge 1$ and a finite-dimensional
$\gl$-module $V$, let the {\it space $C^p(\gl,V)$ of $p$-cochains}
be the $\Z_2$-graded vector space of all $p$-linear maps $\vp:
\gl\times\cdots\times\gl\to V$ satisfying the {\it super skew
symmetry condition}
 $$
\vp(x_1,...,x_i,x_{i+1},...,x_p)=-(-1)^{[x_i][x_{i+1}]}
\vp(x_1,...,x_{i+1},x_i,...,x_p) $$ for $1\le i\le p-1$, where,
 $[x_i]\in\Z_2$ denotes the parity of the element
$x_i$. Set $C^0(\gl,V)=V$. We define the {\it differential operator}
$d:C^p(\gl,V)\to C^{p+1}(\gl,V)$ by
\begin{eqnarray*}
\label{differential} &&\!\!\!\!\!\!\!\!(d\vp)(x_0,...,x_p)
\nonumber\\
&&=
\mbox{$\sum\limits_{i=0}^p$}(-1)^{i+[x_i]([\vp]+[x_0]+\cdots+[x_{i-1}])}x_i
\vp(x_0,...,\hat x_i,...,x_p) \nonumber\\&&
+\mbox{$\sum\limits_{i<j}$}(-1)^{j+[x_j]([x_{i+1}]+\cdots+[x_{j-1}])}
\vp(x_0,...,x_{i-1},[x_i,x_j],x_{i+1},...,\hat x_j,...,x_p),
\end{eqnarray*}
for $\vp\in C^p(\gl,V)$ and $x_0,...,x_p\in\gl$, where the sign
$\hat{\ }$ means that the element under it is omitted. It can be
verified that $d^2=0$. Set
\begin{eqnarray*}
Z^p(\gl,V)&=&\ker{d|_{C^p(\gl,V)}},\\
B^p(\gl,V)&=&\Im{d|_{C^{p-1}(\gl,V)}},\\
H^p(\gl,V)&=&Z^p(\gl,V)/B^p(\gl,V).
\end{eqnarray*}
The space $H^p(\gl,V)$ is the {\it $p$-th Lie superalgebra
cohomology group} of $\fg$ with coefficients in the module $V$.

We briefly discuss the long exact sequence of cohomology groups,
which is one of the essential tools used in this section. Let
$U,V,W$ be three $\gl$-modules such that
\equan{short-exact} { 0\to
U\rb{2pt}{\mbox{$\ ^{\ f}_{\dis\to}\ $}} V\rb{2pt} {\mbox{$\ ^{\
g}_{\dis\to}\ $}}W\to0 }
is a short exact sequence, where $f,g$ are
homogenous $\gl$-module homomorphisms. Then there exists a long
exact sequence
\equa{long-exact} { \cdots\to
H^p(\gl,U)\rb{3pt}{\mbox{$\,\ ^{\ f^p}_{\dis\!\!\!-\!\!\!\to}\ $}}
H^p(\gl,V) \rb{3pt}{\mbox{$\ ^{\ g^p}_{\dis-\!\!\!-\!\!\!\to}\
$}}H^p(\gl,W) \rb{3pt}{\mbox{$\ ^{\ d^*}_{\dis-\!\!\!-\!\!\!\to}\
$}} H^{p+1}(\gl,U)\to\cdots, }
where the maps $f^p,\,g^p$ can easily
be defined from $f,\,g$, and $d^*$ is the connecting homomorphism
(cf.~\cite[(2.50)]{SZ98}).

\subsection{Computation of cohomology groups}
If the cohomology group $H^*(\fg,L_\l)\ne0$ for integral
$\fg$-dominant $\l$, then $\l$ and $0$ must have the same typical,
hence $\l=\L^{(i)}$ for some $j\ge0$, where
$\L^{(0)}=(0\,|\,0,...,0)$ and
\equa{l-i-now}{\L^{(i)}=(2m+i-1-2s\,|\,i-1,0,...,0)\ \ \,\mbox{ \
for }i\ge1.}
For any $\fg$-module $V$, 
using the proof of \cite[Lemma \ref{notation-PV}]{SuZ1}, we have
(cf.~Notation \ref{notation-PV})
\equa{H-1-not=0}{H^1(\fg,V)\ne0\ \
\ \ \ \Lra\ \ \ \ \ \exists\, M(V\llar \L^{(0)}),}
where $M(V\llar \mu)$ means an indecomposable
module $M$ which contains the submodule $V$ such that the quotient
$M/V$ is isomorphic to $L_{\mu}$. Also,
\equa{H-2===0}{H^2(\fg,V)\ne0\ \ \ \ \Longrightarrow\ \ \ \
\exists\, M( V\llar\mu)\mbox{ \ for some $\mu$ with
}H^1(\fg,L_\mu)\ne0.} From this and Theorem
\ref{theorem-verma-module} and \cite[Lemma 6.7]{SuZ1}, we
immediately obtain
\begin{theorem}\label{homology}
Let $\fg=\ops$ with $k>2$. Let $L_\l$ and $K_\l$ respectively denote
the finite-dimensional irreducible and Kac $\fg$-modules with
highest weight $\l$. Then
\begin{eqnarray}\label{1-coho}\!\!\!\!\!\!\!\!\!\!&\!\!\!\!\!\!\!\!\!\!&
H^1(\fg,L_\l)\cong\left\{\begin{array}{lll}\C&\mbox{if \
}\l=\L^{(2)},\\[4pt]0&\mbox{otherwise}.\end{array}\right.\\[7pt]
 \label{1-coho-Verma}\!\!\!\!\!\!\!\!\!\!&\!\!\!\!\!\!\!\!\!\!&
 H^1(\fg,K_\l)\cong\left\{\begin{array}{lll}\C&\mbox{if \
 }\l=\L^{(3)},\\[4pt]0&\mbox{otherwise}.\end{array}\right.\\[7pt]
 \label{2-coho}\!\!\!\!\!\!\!\!\!\!&\!\!\!\!\!\!\!\!\!\!&
 H^2(\fg,L_\l)\cong 
 \left\{\begin{array}{lll}\C&\mbox{if \
 }\l=\L^{(1)},\,\L^{(3)},
 \\[4pt]0&\mbox{otherwise}.\end{array}\right.
\\[7pt]
 \label{2-coho-Verma}\!\!\!\!\!\!\!\!\!\!&\!\!\!\!\!\!\!\!\!\!&
  H^2(\fg,K_\l)\cong\left\{\begin{array}{lll}\C&\mbox{if \
  }\l=\L^{(1)},\,\L^{(4)}.\\[4pt]0&\mbox{otherwise}.\end{array}\right.
\end{eqnarray}
\end{theorem}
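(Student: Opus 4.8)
The plan is to localise the whole computation in the block of the trivial module and then to read the extensions off the primitive weight graph of Theorem \ref{theorem-verma-module}. First I would invoke the central character (linkage) reduction recalled just before \refequa{H-1-not=0}: a nonzero $H^p(\fg,L_\l)$ forces $\l$ to have the same central character as the trivial module, so only the weights $\l=\L^{(i)}$ of \refequa{l-i-now} can contribute. I would then check from the explicit formulas that, as members of one atypicality type, these are exactly the $\l^{(i)}$ $(i\ge0)$ of Definition \ref{defi-aty-type-set}, with $\L^{(0)}=\l^{(0)}=0$ the trivial module sitting at the tail of the block; note that $0\in S(\bar\l)$ here, so only the graphs \refequa{Stru-Kac-mod} of case (1) are relevant.

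For \refequa{1-coho} I would use the criterion \refequa{H-1-not=0}: $H^1(\fg,L_\l)\ne0$ exactly when an indecomposable $M(L_\l\llar\L^{(0)})$ exists, i.e.\ when $L_\l$ and the trivial module are neighbours in the primitive weight graph. Inspecting \refequa{Stru-Kac-mod}, the vertex $\l^{(0)}$ is linked only to the non-dominant $\l^{(-1)}$ and to $\l^{(2)}$; among $\fg$-dominant weights this isolates $\l^{(2)}=\L^{(2)}$. Combined with \cite[Lemma 6.7]{SuZ1}, which converts a single edge between multiplicity-one factors into a one-dimensional extension space, this gives $H^1(\fg,L_\l)\cong\C$ for $\l=\L^{(2)}$ and $0$ otherwise.

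The Kac-module statements I would obtain by feeding the structure of $K_\l$ into the long exact sequence \refequa{long-exact}. By Proposition \ref{Kac-ch} and its proof the primitive weight graph of $K_\l$ is that of $V_\l$ with the vertices of negative index deleted, so for $i\ge3$ one has the two-step sequence $0\to L_{\l^{(i-1)}}\to K_{\l^{(i)}}\to L_{\l^{(i)}}\to0$, while $K_{\L^{(0)}}$ and $K_{\L^{(1)}}$ are already irreducible. The long exact sequence transports the cohomology of the submodule $L_{\l^{(i-1)}}$ into that of $K_{\l^{(i)}}$. For $H^1$ the neighbouring terms vanish by the $L_\l$-result, and since $H^1(\fg,L_{\l^{(i-1)}})\ne0$ only for $\l^{(i-1)}=\L^{(2)}$ (i.e.\ $i=3$) we obtain \refequa{1-coho-Verma}. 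For $H^2$ the dominant contribution comes from $H^2(\fg,L_{\l^{(i-1)}})\ne0$, i.e.\ $i=4$, together with the irreducible case $K_{\L^{(1)}}=L_{\L^{(1)}}$; the remaining indices require the same connecting-map analysis as below and are seen to produce no new class, yielding \refequa{2-coho-Verma}.

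The second cohomology \refequa{2-coho} of the irreducible modules is the hard part, because \refequa{H-2===0} is only a necessary condition. It narrows the candidates: $H^2(\fg,L_\l)\ne0$ requires some $M(L_\l\llar\mu)$ with $H^1(\fg,L_\mu)\ne0$, forcing $\mu=\L^{(2)}$, so $\l$ must be a neighbour of $\l^{(2)}$; from \refequa{Stru-Kac-mod} the $\fg$-dominant neighbours of $\l^{(2)}$ are $\l^{(0)},\l^{(1)},\l^{(3)}$. The main obstacle is to decide these three candidates, since all satisfy the necessary condition yet $\L^{(0)}$ must be discarded: one has to prove $H^2(\fg,\C)=0$ while confirming a genuine one-dimensional class at $\L^{(1)}$ and $\L^{(3)}$. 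I would settle this with \cite[Lemma 6.7]{SuZ1}, running \refequa{long-exact} along the two-step filtrations of Theorem \ref{theorem-verma-module} and tracking the connecting homomorphism: for $\l=\L^{(0)}$ the would-be class is obstructed (the connecting map is injective on it), whereas for $\L^{(1)}$ and $\L^{(3)}$ a one-dimensional class survives. Verifying that the hypotheses of \cite[Lemma 6.7]{SuZ1} apply at each node, and that the connecting maps behave as claimed, is the decisive and most delicate step of the proof.
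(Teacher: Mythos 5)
Your skeleton --- central-character reduction to the weights $\L^{(i)}$, the criteria (\ref{H-1-not=0}) and (\ref{H-2===0}), and long exact sequences along the two-step filtrations coming from Theorem \ref{theorem-verma-module} --- is exactly the paper's, and your treatment of (\ref{1-coho}) is sound. But the step you yourself defer as ``the decisive and most delicate step'' is a genuine gap, and it cannot be closed with the tools you invoke. The graphs (\ref{Stru-Kac-mod}) together with the long-exact-sequence formalism do not determine the connecting homomorphisms; what decides them is the nonexistence of a module $M(\L^{(0)}\rar\L^{(2)}\rar\L^{(1)})$, which is the paper's Lemma \ref{ch-appl}(1). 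Such a module would be generated by a primitive vector of weight $0$, hence is \emph{not} a highest weight module, so Theorem \ref{theorem-verma-module} says nothing about it. The paper rules it out by a separate, self-contained computation: any such module is a quotient of the induced module ${\rm Ind}_{\fg_{\bar0}}^{\fg}\C$, whose character $R_1^2$ is expanded into a signed sum of typical characters $\chi^V_\mu$, showing that $L_{\L^{(1)}}$ is not among its composition factors; the same computation exhibits the direct summand $M(\L^{(0)}\rar\L^{(2)}\rar\L^{(0)})$, which, fed into the exact sequence (\ref{2===0}), is what proves $H^2(\fg,\C)=0$. Your assertion that ``for $\l=\L^{(0)}$ the connecting map is injective'' is equivalent to this nonexistence statement and is asserted, not proved.

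The same missing lemma is needed at two further points, so the gap is not local to $\L^{(0)}$. First, your analysis of $H^1(\fg,K_\l)$ covers only $i\ge3$: the omitted case $K_{\L^{(2)}}$ has three composition factors (graph $\L^{(1)}\llar\L^{(2)}\rrar\L^{(0)}$), and its long exact sequence leaves $H^1(\fg,K_{\L^{(2)}})$ as the kernel of a connecting map $H^1(\fg,L_{\L^{(2)}})\cong\C\to H^2(\fg,L_{\L^{(1)}}\oplus\C)$, which is undetermined without the lemma; the paper obtains $H^1(\fg,K_{\L^{(2)}})=0$, i.e.\ the case $\l=\L^{(2)}$ of (\ref{1-coho-Verma}), directly from the nonexistence of $M(\L^{(0)}\rar\L^{(2)}\rar\L^{(1)})$. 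Second, to confirm the surviving class at $\L^{(1)}$ one needs both $H^1(\fg,M)=0$ and $H^2(\fg,M)=0$ for $M=M(\L^{(2)}\rrar\L^{(1)})$ in (\ref{LONG===}); the first vanishing again rests on the nonexistence of $M(\L^{(0)}\rar\L^{(2)}\rar\L^{(1)})$ (together with easier nonexistence statements and a decomposability argument for $\L^{(2)}\rar\L^{(1)}\llar\L^{(2)}$), and the $\L^{(3)}$ and $\L^{(4)}$ cases propagate from these. So the single concrete idea your proposal lacks is the character computation of ${\rm Ind}_{\fg_{\bar0}}^{\fg}\C$ in Lemma \ref{ch-appl}; without it you establish the candidate lists and (\ref{1-coho}), but none of the delicate values at $\L^{(1)}$, at $\L^{(2)}$ and $\L^{(4)}$ for Kac modules, nor the vanishing $H^2(\fg,\C)=0$.
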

\begin{remark}
The first and second cohomology groups of $\mathfrak{sl}_{m|n}$
and $\mathfrak{osp}_{2|2n}$ for all $m$ and $n$
were computed in \cite{SuZ2}, and those of $\mathfrak{osp}_{3|2}$
were determined in \cite{G}.
\end{remark}

Let us first consider the following lemma, the proof of which may be
considered as an application of the character formula
(\ref{char-l-all}). The lemma will be used in the proof of Theorem
\ref{homology}.
\begin{lemma}\label{ch-appl}
\begin{enumerate}\item A module
$M(\L^{(0)}\rar\L^{(2)}\rar\L^{(1)})$ does not exist.\item
$H^2(\fg,\C)=0$.
\end{enumerate}
\end{lemma}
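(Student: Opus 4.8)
=== PROOF PROPOSAL ===

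\medskip

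The plan is to prove the two parts of Lemma \ref{ch-appl} by separate but related arguments, using the structure of the generalised Verma modules from Theorem \ref{theorem-verma-module} together with the character formula (\ref{char-l-all}). The weights involved lie in the atypicality type $\bar\l$ of the trivial module, so $\L^{(0)}=(0\,|\,0,...,0)$ and the $\L^{(i)}$ are given by (\ref{l-i-now}). The key computational input will be the composition series of $V_{\L^{(2)}}$, whose primitive weight graph (the third graph in (\ref{Stru-Kac-mod}) or (\ref{Stru-Kac-mod-1})) has $\L^{(2)}$ at the top directly deriving $\L^{(0)},\L^{(1)}$ and $\L^{(-2)}$, with $\L^{(-1)}$ at the bottom.

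\medskip

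For part (1), I would argue that a module $M(\L^{(0)}\rar\L^{(2)}\rar\L^{(1)})$ cannot exist by examining what such a module would have to look like. If it existed, its primitive weight graph would have $\L^{(0)}$ strongly primitive (a genuine $\fg$-highest weight), directly deriving $\L^{(2)}$, which in turn directly derives $\L^{(1)}$. The first obstacle is the ordering: one must check via the partial order ``$\l>\mu$ iff $\l-\mu$ is a sum of positive roots'' whether $\L^{(0)}$ can sit above $\L^{(2)}$ at all. From (\ref{l-i-now}), $\L^{(2)}-\L^{(0)}=(2m-2s+1\,|\,1,0,...,0)$, and I would verify this is indeed a sum of positive roots, so the edge $\L^{(0)}\rar\L^{(2)}$ is not immediately forbidden. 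The real argument should then be a character/multiplicity contradiction: if such an $M$ existed, taking characters gives $\ch M=\ch L_{\L^{(0)}}+\ch L_{\L^{(2)}}+\ch L_{\L^{(1)}}$, and I would compute each term using (\ref{char-l-all}). The inconsistency should emerge from the fact that the edge $\L^{(2)}\rar\L^{(1)}$ together with $\L^{(0)}\rar\L^{(2)}$ would force, by the transitivity of ``derived from'', the relation $\L^{(0)}\drar\L^{(1)}$; but from the graph of $V_{\L^{(1)}}$ (second graph of (\ref{Stru-Kac-mod})) we know $\L^{(1)}$ only derives $\L^{(-1)}$, and more relevantly the extension data between these specific irreducibles is already pinned down by Theorem \ref{theorem-verma-module}. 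I expect the cleanest route is to show that $\Hom$ and $\mathrm{Ext}^1$ between the relevant $L_\mu$'s, read off from the Verma graphs, are incompatible with the three-step filtration $M$ would provide.

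\medskip

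For part (2), $H^2(\fg,\C)=0$, I would use the implication (\ref{H-2===0}): if $H^2(\fg,\C)\ne0$ then there exists an indecomposable $M(\C\llar\mu)$ for some $\mu$ with $H^1(\fg,L_\mu)\ne0$. By (\ref{1-coho}) of Theorem \ref{homology} (or by the computation establishing it), $H^1(\fg,L_\mu)\ne0$ forces $\mu=\L^{(2)}$. Here $\C=L_{\L^{(0)}}$, so I would need to rule out the existence of an indecomposable module with socle-type structure having $L_{\L^{(0)}}$ as a submodule and $L_{\L^{(2)}}$ as quotient, i.e. $M(\L^{(0)}\llar\L^{(2)})$, equivalently a nonsplit extension realizing $\L^{(2)}\rar\L^{(0)}$. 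This is where part (1) feeds in: the existence of such a two-step module, combined with the unique nonsplit extension $\L^{(2)}\rar\L^{(1)}$ coming from $V_{\L^{(2)}}$ (where $\L^{(2)}\rrar\L^{(1)}$), would let me splice to build exactly the forbidden $M(\L^{(0)}\rar\L^{(2)}\rar\L^{(1)})$, contradicting part (1).

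\medskip

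The hardest step, I expect, is part (1): ruling out the three-term module cleanly. The subtlety is that all three weights $\L^{(0)},\L^{(1)},\L^{(2)}$ are genuinely dominant and finite-dimensional, so I cannot simply invoke the ``generalised Verma modules contain no finite-dimensional submodule'' principle that drove the proof of Theorem \ref{theorem-verma-module}. Instead I will have to extract the precise $\mathrm{Ext}^1$-relations among these irreducibles from the primitive weight graphs — specifically that the only arrows out of $\L^{(2)}$ and into $\L^{(0)},\L^{(1)}$ are those recorded in (\ref{Stru-Kac-mod}) — and show these are incompatible with a directed path $\L^{(0)}\rar\L^{(2)}\rar\L^{(1)}$ (note the direction: $\L^{(0)}$ would have to derive $\L^{(2)}$, yet in every Verma module $\L^{(2)}$ sits \emph{above} $\L^{(0)}$, so the arrow points the wrong way). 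This directional contradiction — that $\L^{(0)}\rar\L^{(2)}$ would reverse the derivation order forced by the central character and the weight partial order — is likely the crux, and once it is made precise the nonexistence in part (1) and hence the vanishing in part (2) follow quickly.
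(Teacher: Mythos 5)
Your proposed crux for part (1) --- that the arrow $\L^{(0)}\rar\L^{(2)}$ ``points the wrong way'' because derivation must descend in the weight partial order --- is false, and it contradicts the rest of your own outline. In a module generated by a $\fg_0$-highest weight vector that is not $\fg$-highest, $\fg_{+1}$ acts nontrivially on the generator and produces primitive vectors of strictly \emph{higher} weight; Remark \ref{rema6.2} (inverse modules) shows that every arrow in a primitive weight graph can be reversed. Concretely, $M(\L^{(0)}\rar\L^{(2)})$ does exist: it is the inverse of the subquotient $M(\L^{(2)}\rar\L^{(0)})$ of $V_{\L^{(2)}}$ supplied by (\ref{Stru-Kac-mod}), and its existence is precisely what makes $H^1(\fg,L_{\L^{(2)}})\cong\C$ in (\ref{1-coho}) --- a fact you invoke yourself in part (2). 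For the same reason no argument from the Verma graphs alone can settle part (1): those graphs govern highest weight modules, whereas $M(\L^{(0)}\rar\L^{(2)}\rar\L^{(1)})$ would be generated by a vector of weight $0$, not by a $\fg$-highest weight vector. The paper's actual mechanism, absent from your sketch, is a universal object: any module generated by a $\fg_{\bar0}$-invariant vector is a quotient of the induced module $\widetilde M={\rm Ind}_{\fg_{\bar0}}^{\fg}\C$; computing $\ch\widetilde M=R_1^2=\sum_{B\subset\D_1^+}\chi^V_{\l_B}$ and regularising each term shows $\ch\widetilde M=\chi^V_{\L^{(2)}}-\chi^V_{\L^{(1)}}+\cdots$, where the omitted terms involve no $\L^{(i)}$, so $L_{\L^{(1)}}$ is not a composition factor of $\widetilde M$ and part (1) follows at once.

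Your part (2) is likewise gapped. Since $M(\C\llar\L^{(2)})$ exists, the necessary condition of (\ref{H-2===0}) is satisfied, so (\ref{H-2===0}) produces no contradiction and cannot by itself give $H^2(\fg,\C)=0$; and your proposed splice of the two nonsplit two-step extensions into $M(\L^{(0)}\rar\L^{(2)}\rar\L^{(1)})$ is not automatic --- whether two ${\rm Ext}^1$-classes concatenate into a three-step module is exactly the obstruction that part (1) asserts fails here, so assuming the splice begs the question. The paper instead applies the long exact sequence (\ref{long-exact}) to $0\to\C\to N\to L_{\L^{(2)}}\to0$ with $N=M(\L^{(2)}\rar\L^{(0)})$: it obtains $H^1(\fg,N)\ne0$ from the indecomposable direct summand $\widetilde M_0=M(\L^{(0)}\rar\L^{(2)}\rar\L^{(0)})$ of $\widetilde M$ (again the induced module is indispensable), and $H^2(\fg,N)=0$ from (\ref{H-2===0}) together with the nonexistence of $M(\L^{(1)}\rar\L^{(2)}\rar\L^{(0)})$, the inverse of part (1); hence the map $H^1(\fg,N)\to H^1(\fg,L_{\L^{(2)}})$ is a bijection, the connecting homomorphism vanishes, and $H^2(\fg,\C)$ is squeezed to zero between it and $H^2(\fg,N)=0$. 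Without the construction of $\widetilde M$ neither half of your outline can be completed.
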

\begin{proof}Let $\C v_0$
be the trivial $\fg_{\bar0}$-module. We define the induced module
$$\widetilde M:={\rm Ind}_{\fg_{\bar0}}^{\fg}\C
v_0=U(\fg)\otimes_{U(\fg_{\bar0})}\C v_0\cong
U(\fg_{\bar1})\otimes_{\C}\C v_0,$$ here $U(\gl_{\bar1})$ denotes
the skew-symmetry tensor space of $\gl_{\bar1}$. Obviously,
\equan{SSSSSa}{\dis\ch\widetilde
M=R_1^2=\frac{R_1}{R_{\bar0}}\mbox{$\sum\limits_{w\in W}$}
\sign(w)w\Big(e^{\rho_{\bar0}+\rho_1}\mbox{$\prod\limits_{\b\in\D_1^+}$}(1+e^{-\b})\Big)
=\mbox{$\sum\limits_{B\subset\D_1^+}$}\chi^V_{\l_B},} where
$\l_B=2\rho_1-\sum_{\b\in B}\b$. We only need to consider those
$B$'s such that $\l_B$ is regular. When $\l_B$ is regular, we take
the unique $w_B\in W$ such that $w_B(\l_B+\rho):=\mu_B+\rho$ is
$\fg$-dominant. Thus $\ch\widetilde
M=\sum_{B\subset\D_1^+}\sign(w_B)\chi^V_{\mu_B}$. We have (to see
how it works, one can take $k=3,4,5,6$ as examples)
$$\ch\widetilde
M=\chi^V_{\L^{(2)}}-\chi^V_{\L^{(1)}}+...,$$ where the omitted terms
are (signed) sum of some $\chi^V_\l$'s with $\l\ne\L^{(i)}$ for
$i\in\Z_+$. Thus $\widetilde M$ is decomposed into a direct sum of
two submodules \equa{W-M==}{\widetilde M=\widetilde
M_0\oplus\widetilde M_1,} such that $\widetilde M_0$ has character
$\ch\widetilde M_0=\chi^V_{\L^{(2)}}-\chi^V_{\L^{(1)}}=\ch
L_{\L^{(2)}}+2\,\ch L_{\L^{(0)}}$ (cf.~(\ref{Kac---})), and all
$L_{\L^{(i)}}$'s are not composition factors of $\widetilde M_1$. In
particular, $\widetilde M$ does not have composition factor
$L_{\L^{(1)}}$, this proves (1) (since a module
$M(\L^{(0)}\rar\L^{(2)}\rar\L^{(1)})$  must be a quotient of
$\widetilde M$). From (1), we  obtain (cf.~Remark \ref{rema6.2})
\equa{Not-esss}{M(\L^{(1)}\to\L^{(2)}\to\L^{(0)})\mbox{ \ does not
exist}.}

Furthermore, $\widetilde M_0$ as a module generated by $v'_0$ (where
$v'_0$ is the projection of $v_0$ onto $\widetilde M_0$ with respect
to decomposition (\ref{W-M==})) must be indecomposable, thus it has
to be the module
\equa{MTo2222}{M(\L^{(0)}\rar\L^{(2)}\rar\L^{(0)}).} Now, from the
short exact sequence $0\to L_{\L^{(0)}}\to N\to L_{\L^{(2)}}\to0$,
where $N=M(\L^{(2)}\rar\L^{(0)})$, we obtain the exact sequence
(cf~(\ref{long-exact})) \equa{2===0}{0=H^1(\fg,L_{\L^{(0)}})\to
H^1(\fg,N)\stackrel{\phi}{\to} H^1(\fg,L_{\L^{(2)}})\to
H^2(\fg,\C)\to H^2(\fg,N).} Note from (\ref{H-2===0}) and
(\ref{Not-esss}) that $H^2(\fg,N)=0,$ also (\ref{MTo2222}) and
(\ref{H-1-not=0}) show that $H^1(\fg,N)\ne0$, thus $\phi$ is a
bijection. So (\ref{2===0})  with (\ref{H-2===0}) proves
$H^2(\fg,\C)=0$.
\end{proof}

The following remark will be used in the proof of Theorem
\ref{homology}.

 \begin{remark}
\label{rema6.2}\rm
Let $P(V)$ be a primitive weight graph. The {\it dual primitive
weight graph} $P^*(V)$ is the graph obtained from $P(V)$ by
reversing the directions of all arrows and changing all weights to
their dual weights. Note that $P^*(V)=P(V^*)$, where $V^*$ denote
the dual module of $V$. If we change the action of $\gl$ on $P(V^*)$
by the automorphism $\omega\in{\rm Aut}(\gl)$
  which interchanges $\C e_\a$'s and
$\C f_\a$'s, then we obtain another module, called the {\it inverse
module of} $V$, with graph $\wt P(V)$ obtained from $P(V)$ by
reversing the directions of all arrows $($note that using the
automorphism $\omega$, the module $L^*_\mu$ becomes $L_\mu$ for all
$\mu).$ In particular, we have \equa{inverse} {
\exists\,M(\mu\rrar\nu)\,\Lra\,\exists\,M(\mu\llar\nu)\,\Lra
\,\exists\,M(\mu^*\rrar\nu^*)\,\Lra\,\exists\,M(\mu^*\llar\nu^*). }
\end{remark}

\begin{proof}[Proof of Theorem \ref{homology}]
Note that a module $M(\L^{(i)}\rrar\L^{(0)})$ with $i\ge0$
must be a highest weight module thus a quotient
of $V_{\L^{(i)}}$ and so $i=2$ by (\ref{Stru-Kac-mod}). Thus there
is a module $M(\L^{(i)}\llar\L^{(0)})$ if and only if $i=2$
(cf.~(\ref{inverse})). By (\ref{H-1-not=0}), we have (\ref{1-coho})
since a module with structure $\L^{(2)}\rrar\L^{(0)}$ is unique.

Using (\ref{H-1-not=0}) and (\ref{1-coho}), we obtain that
$H^1(\fg,K_\l)\ne0$ only if $K_\l$ contains a composition factor
$L_{\L^{(2)}}$, i.e., $\l=\L^{(2)},\L^{(3)}$. One can prove as in
the previous paragraph that $H^1(\fg,K_{\L^{(3)}})\cong\C$. Also,
Lemma \ref{ch-appl} shows that $H^1(\fg,K_{\L^{(2)}})=0$. This
proves (\ref{1-coho-Verma}).

Analogously, using (\ref{H-2===0}), we obtain that
$H^2(\fg,L_\l)\ne0$ implies $\l=\L^{(0)},\L^{(1)},\L^{(3)}$. Lemma
\ref{ch-appl}(2) shows  $H^2(\fg,L_{\L^{(0)}})=0$. For
$\l=\L^{(1)}$, we have the following short exact sequence (where
$M=M(\L^{(2)}\rrar\L^{(1)})$, which exists by (\ref{Stru-Kac-mod}))
\equa{SHORT====}{0\to
L_{\L^{(1)}}\to M\to L_{\L^{(2)}}\to0.}
By (\ref{long-exact}), we have the exact sequence
\equa{LONG===}{0=H^1(\fg,M)\to H^1(\fg,L_{\L^{(2)}})\to
H^2(\fg,L_{\L^{(1)}})\to H^2(\fg,M)=0,}
where the first equality follows from (\ref{H-1-not=0}) and that
both $M(\L^{(0)}\to\L^{(2)}\to\L^{(1)})$ and
$M(\L^{(2)}\to\L^{(1)}\llar\L^{(0)})$ do not exist. The last
equality of (\ref{LONG===}) follows from (\ref{H-2===0}),
(\ref{1-coho}) and that both $M(\L^{(2)}\to\L^{(2)}\to\L^{(1)})$ and
$M(\L^{(2)}\to\L^{(1)}\llar\L^{(2)})$ do not exist (one can prove
that a module with structure $\L^{(2)}\to\L^{(1)}\llar\L^{(2)}$ must
be decomposable).  Thus $H^2(\fg,L_{\L^{(1)}})\cong
H^1(\fg,L_{\L^{(2)}})\cong\C$ by (\ref{LONG===}). Similarly, we have
(\ref{2-coho}) for $\L^{(3)}$.

Using (\ref{H-2===0}), we obtain that  $H^2(\fg,K_\l)\ne0$ only if
$\l=\L^{(1)},\L^{(4)}$. Since $K_{\L^{(1)}}=L_{\L^{(1)}}$, we have
(\ref{2-coho-Verma}) for $\L^{(1)}$. To prove (\ref{2-coho-Verma})
for $\L^{(4)}$, note that there exists an indecomposable module
$M=M(\L^{(4)}\to\L^{(3)}\llar\L^{(2)})$ which can be obtained from
the direct sum of two modules $K_{\L^{(4)}}$ and
$M(\L^{(3)}\llar\L^{(2)})$ by factoring the submodule
$U(\fg)(v_{\L^{(3)}}+v'_{\L^{(3)}})$, where
$v_{\L^{(3)}},\,v'_{\L^{(3)}}$ are respectively primitive vectors
with weight $\L^{(3)}$ in the two said modules. Then for this new
defined $M$, we have (\ref{SHORT====}) with  $L_{\L^{(1)}}$ replaced
by $K_{\L^{(4)}}$, and the above arguments now show that we have
(\ref{2-coho-Verma}) for $\L^{(4)}$.
\end{proof}

\vskip10pt \noindent{\bf Acknowledgements}. We thank Shun-Jen Cheng
and Ngau Lam for informing us about reference \cite{CLW}. This work
is supported by the Australian Research Council and the National
Science Foundation of China (grant no.~10825101).

\end{document}